\documentclass[a4paper]{article}

\usepackage[margin=25mm]{geometry}
\usepackage{amsmath,amsthm,amssymb,mathrsfs}
\usepackage{enumitem} 
\setlist[description]
{noitemsep,labelindent=4ex}
\setlist{noitemsep}
\usepackage{graphicx}
\usepackage{mathtools}
\usepackage[export]{adjustbox}
\usepackage{todonotes}

\usepackage{setspace}

\graphicspath{{./figures/}}
\newdimen\BBoxXMin
\newdimen\BBoxYMin
\newdimen\BBoxXMax
\newdimen\BBoxYMax
\colorlet{bgcolor}{gray!20}
\colorlet{defectcolor}{gray!50}

\definecolor{myPink}{HTML}{FDF5FF}
\definecolor{myRed}{HTML}{eb0707}
\definecolor{myOrange}{HTML}{eb7907}
\definecolor{myGrey}{HTML}{D1D5DB}
\colorlet{myPurple}{violet!70!magenta}

\definecolor{T-region}{HTML}{E88444} 
\definecolor{G-region}{HTML}{B3CCE6} 
\definecolor{parabolic-defect}{HTML}{77CCAA} 
\definecolor{weyl-defect}{HTML}{888811} 

\tikzset{
    thick,
    font=\small\sffamily,
    line_defect/.style={draw=defectcolor, line width=4pt}, 
    skein/.style={thick},
    3D_line/.style={thick},
    2D_line/.style={thick},
    1D_line/.style={thick},
    crossing_over/.style={thick,preaction={draw=bgcolor, -, line width=4pt}},
    every node/.style={scale=.9}
}

\newcommand{\FullWeylCatScaled}[2]{%
  \mathord{%
    \tikz[
      baseline=-0.5ex,
      scale=#1,
      every node/.style={transform shape}
    ]{%
      \begin{scope}
        \begin{scope}
          \path[clip] (0,0) circle[radius=6mm];
          \fill[T-region] (0,0) circle[radius=6mm];
          \filldraw[fill=G-region,draw=parabolic-defect,very thick]
            (-6mm,3mm) to[out=0,in=0,looseness=1.6]
            coordinate[pos=.5] (left-junction) (-6mm,-3mm);
          \filldraw[fill=G-region,draw=parabolic-defect,very thick]
            (6mm,3mm) to[out=180,in=180,looseness=1.6]
            coordinate[pos=.5] (right-junction) (6mm,-3mm);

          \draw[very thick,draw=weyl-defect]
            (left-junction) --
            node[pos=.5,above,scale=.7] {$ #2\mathcal{T}_{w_0} $}
            (right-junction);

          \fill (left-junction) circle[radius=1.5pt]
                (right-junction) circle[radius=1.5pt];
        \end{scope}

         \draw[thick] (0,0) circle[radius=6mm];
         \pgfresetboundingbox
      \path[use as bounding box] (-19.7pt,-19.7pt) rectangle (19.7pt,19.7pt);
      \end{scope}
    }%
  }%
}

\newcommand{\FullWeylCat}{%
  \mathchoice
    {\FullWeylCatScaled{1}{\textstyle}}
    {\FullWeylCatScaled{1}{\textstyle}}
    {\FullWeylCatScaled{0.7}{\scriptstyle}}
    {\FullWeylCatScaled{0.55}{\scriptscriptstyle}}%
}

\newcommand{\LeftWeylCatScaled}[2]{%
  \mathord{%
    \tikz[
      baseline=-0.5ex,
      scale=#1,
      every node/.style={transform shape}
    ]{%
      \begin{scope}
        \fill[G-region]
          (120:6mm) coordinate (-NW)
          arc[start angle=120, delta angle=120, radius=6mm]
          to (0,0) coordinate (-center)
          to cycle;

        \fill[T-region]
          (240:6mm) coordinate (-SW)
          arc[start angle=240, delta angle=240, radius=6mm]
          to (0,0)
          to cycle;

        \path (0:6mm) coordinate (-E);

        \draw[very thick,parabolic-defect]
          (-center) to coordinate[pos=.5] (-mid-NW) coordinate[pos=1] (-label-NW) (-NW)
          (-center) to coordinate[pos=.5] (-mid-SW) coordinate[pos=1] (-label-SW) (-SW);

        \draw[very thick,weyl-defect]
          (-center) to
          coordinate[pos=.5] (-mid-E)
          coordinate[pos=.8] (-late-E)
          coordinate[black,pos=1] (-label-E)
          (-E);

        \fill (-center) circle[radius=2pt];

        \draw[thick] (-center) circle[radius=6mm];
      \end{scope}
    }%
  }%
}
\newcommand{\LeftWeylCat}{%
  \mathchoice
    {\LeftWeylCatScaled{1}{\textstyle}}
    {\LeftWeylCatScaled{1}{\textstyle}}
    {\LeftWeylCatScaled{0.7}{\scriptstyle}}
    {\LeftWeylCatScaled{0.55}{\scriptscriptstyle}}%
}

\newcommand{\LeftWeylAlgGTTScaled}[2]{%
  \mathord{%
    \tikz[
      baseline=-0.5ex,
      scale=#1,
      every node/.style={transform shape}
    ]{%
      \begin{scope}
        \fill[G-region]
          (120:6mm) coordinate (-NW)
          arc[start angle=120, delta angle=120, radius=6mm]
          to (0,0) coordinate (-center)
          to cycle;

        \fill[T-region]
          (240:6mm) coordinate (-SW)
          arc[start angle=240, delta angle=240, radius=6mm]
          to (0,0)
          to cycle;

        \path (0:6mm) coordinate (-E);

        \draw[very thick,parabolic-defect]
          (-center) to coordinate[pos=.5] (-mid-NW) coordinate[pos=1] (-label-NW) (-NW)
          (-center) to coordinate[pos=.5] (-mid-SW) coordinate[pos=1] (-label-SW) (-SW);

        \draw[very thick,weyl-defect]
          (-center) to
          coordinate[pos=.5] (-mid-E)
          coordinate[pos=.8] (-late-E)
          coordinate[black,pos=1] (-label-E)
          (-E);

        \fill (-center) circle[radius=2pt];

        \draw[thick]
    ( 90-15:6mm-.5mm) -- ( 90-15:6mm+.5mm)
    ( 90+15:6mm-.5mm) -- ( 90+15:6mm+.5mm)
    (270-15:6mm-.5mm) -- (270-15:6mm+.5mm)
    (270+15:6mm-.5mm) -- (270+15:6mm+.5mm)
    (180-15:6mm-.5mm) -- (180-15:6mm+.5mm)
    (180+15:6mm-.5mm) -- (180+15:6mm+.5mm)
    ;
  \draw[thick]
    (90-15:6mm) arc[start angle=90-15,end angle=-90+15,radius=6mm]
    (90+15:6mm) arc[start angle=90+15,end angle=180-15,radius=6mm]
    (180+15:6mm) arc[start angle=180+15,end angle=270-15,radius=6mm]
    ;
      \end{scope}
    }%
  }%
}
\newcommand{\LeftWeylAlgGTT}{%
  \mathchoice
    {\LeftWeylAlgGTTScaled{1}{\textstyle}}
    {\LeftWeylAlgGTTScaled{1}{\textstyle}}
    {\LeftWeylAlgGTTScaled{0.7}{\scriptstyle}}
    {\LeftWeylAlgGTTScaled{0.55}{\scriptscriptstyle}}%
}
\newcommand{\LeftWeylAlgTTScaled}[2]{%
  \mathord{%
    \tikz[
      baseline=-0.5ex,
      scale=#1,
      every node/.style={transform shape}
    ]{%
      \begin{scope}
        \fill[G-region]
          (120:6mm) coordinate (-NW)
          arc[start angle=120, delta angle=120, radius=6mm]
          to (0,0) coordinate (-center)
          to cycle;

        \fill[T-region]
          (240:6mm) coordinate (-SW)
          arc[start angle=240, delta angle=240, radius=6mm]
          to (0,0)
          to cycle;

        \path (0:6mm) coordinate (-E);

        \draw[very thick,parabolic-defect]
          (-center) to coordinate[pos=.5] (-mid-NW) coordinate[pos=1] (-label-NW) (-NW)
          (-center) to coordinate[pos=.5] (-mid-SW) coordinate[pos=1] (-label-SW) (-SW);

        \draw[very thick,weyl-defect]
          (-center) to
          coordinate[pos=.5] (-mid-E)
          coordinate[pos=.8] (-late-E)
          coordinate[black,pos=1] (-label-E)
          (-E);

        \fill (-center) circle[radius=2pt];

        \draw[thick]
    ( 90-15:6mm-.5mm) -- ( 90-15:6mm+.5mm)
    ( 90+15:6mm-.5mm) -- ( 90+15:6mm+.5mm)
    (270-15:6mm-.5mm) -- (270-15:6mm+.5mm)
    (270+15:6mm-.5mm) -- (270+15:6mm+.5mm)
    ;
  \draw[thick]
    (90-15:6mm) arc[start angle=90-15,end angle=-90+15,radius=6mm]
    (90+15:6mm) arc[start angle=90+15,end angle=270-15,radius=6mm]
    ;
      \end{scope}
    }%
  }%
}
\newcommand{\LeftWeylAlgTT}{%
  \mathchoice
    {\LeftWeylAlgTTScaled{1}{\textstyle}}
    {\LeftWeylAlgTTScaled{1}{\textstyle}}
    {\LeftWeylAlgTTScaled{0.7}{\scriptstyle}}
    {\LeftWeylAlgTTScaled{0.55}{\scriptscriptstyle}}%
}
\newcommand{\RightWeylCatScaled}[2]{%
  \mathord{%
    \tikz[
      baseline=-0.5ex,
      scale=#1,
      every node/.style={transform shape}
    ]{%
      \begin{scope}[xscale=-1]
        \fill[G-region]
          (120:6mm) coordinate (-NW)
          arc[start angle=120, delta angle=120, radius=6mm]
          to (0,0) coordinate (-center)
          to cycle;

        \fill[T-region]
          (240:6mm) coordinate (-SW)
          arc[start angle=240, delta angle=240, radius=6mm]
          to (0,0)
          to cycle;

        \path (0:6mm) coordinate (-E);

        \draw[very thick,parabolic-defect]
          (-center) to coordinate[pos=.5] (-mid-NW) coordinate[pos=1] (-label-NW) (-NW)
          (-center) to coordinate[pos=.5] (-mid-SW) coordinate[pos=1] (-label-SW) (-SW);

        \draw[very thick,weyl-defect]
          (-center) to
          coordinate[pos=.5] (-mid-E)
          coordinate[pos=.8] (-late-E)
          coordinate[black,pos=1] (-label-E)
          (-E);

        \fill (-center) circle[radius=2pt];

        \draw[thick] (-center) circle[radius=6mm];
      \end{scope}
    }%
  }%
}
\newcommand{\RightWeylCat}{%
  \mathchoice
    {\RightWeylCatScaled{1}{\textstyle}}
    {\RightWeylCatScaled{1}{\textstyle}}
    {\RightWeylCatScaled{0.7}{\scriptstyle}}
    {\RightWeylCatScaled{0.55}{\scriptscriptstyle}}%
}

\newcommand{\RightWeylAlgTTScaled}[2]{%
  \mathord{%
    \tikz[
      baseline=-0.5ex,
      scale=#1,
      every node/.style={transform shape}
    ]{%
      \begin{scope}[xscale=-1]
        \fill[G-region]
          (120:6mm) coordinate (-NW)
          arc[start angle=120, delta angle=120, radius=6mm]
          to (0,0) coordinate (-center)
          to cycle;

        \fill[T-region]
          (240:6mm) coordinate (-SW)
          arc[start angle=240, delta angle=240, radius=6mm]
          to (0,0)
          to cycle;

        \path (0:6mm) coordinate (-E);

        \draw[very thick,parabolic-defect]
          (-center) to coordinate[pos=.5] (-mid-NW) coordinate[pos=1] (-label-NW) (-NW)
          (-center) to coordinate[pos=.5] (-mid-SW) coordinate[pos=1] (-label-SW) (-SW);

        \draw[very thick,weyl-defect]
          (-center) to
          coordinate[pos=.5] (-mid-E)
          coordinate[pos=.8] (-late-E)
          coordinate[black,pos=1] (-label-E)
          (-E);

        \fill (-center) circle[radius=2pt];

        \draw[thick]
    ( 90-15:6mm-.5mm) -- ( 90-15:6mm+.5mm)
    ( 90+15:6mm-.5mm) -- ( 90+15:6mm+.5mm)
    (270-15:6mm-.5mm) -- (270-15:6mm+.5mm)
    (270+15:6mm-.5mm) -- (270+15:6mm+.5mm)
    ;
  \draw[thick]
    (90-15:6mm) arc[start angle=90-15,end angle=-90+15,radius=6mm]
    (90+15:6mm) arc[start angle=90+15,end angle=270-15,radius=6mm]
    ;
      \end{scope}
    }%
  }%
}
\newcommand{\RightWeylAlgTT}{%
  \mathchoice
    {\RightWeylAlgTTScaled{1}{\textstyle}}
    {\RightWeylAlgTTScaled{1}{\textstyle}}
    {\RightWeylAlgTTScaled{0.7}{\scriptstyle}}
    {\RightWeylAlgTTScaled{0.55}{\scriptscriptstyle}}%
}
\newcommand{\WeylCatScaled}[2]{%
  \mathord{%
    \tikz[
      baseline=-0.5ex,
      scale=#1,
      every node/.style={transform shape}
    ]{%
\begin{scope}
    \path[clip] (0,0) circle[radius=6mm];
    \fill[T-region] (0,0) circle[radius=6mm];
  \draw[very thick,draw=weyl-defect]
    (-6mm,0) -- (6mm,0);
\end{scope}

  \draw[thick] (0,0) circle[radius=6mm];
  \pgfresetboundingbox
      \path[use as bounding box] (-19.7pt,-19.7pt) rectangle (19.7pt,19.7pt);
  
    }%
  }%
}
\newcommand{\WeylCat}{%
  \mathchoice
    {\WeylCatScaled{1}{\textstyle}}
    {\WeylCatScaled{1}{\textstyle}}
    {\WeylCatScaled{0.7}{\scriptstyle}}
    {\WeylCatScaled{0.55}{\scriptscriptstyle}}%
}
\newcommand{\WeylAlgScaled}[2]{%
  \mathord{%
    \tikz[
      baseline=-0.5ex,
      scale=#1,
      every node/.style={transform shape}
    ]{%
\begin{scope}
    \path[clip] (0,0) circle[radius=6mm];
    \fill[T-region] (0,0) circle[radius=6mm];
    
  \end{scope}
  \draw[very thick,draw=weyl-defect]
    (-6mm,0) -- (6mm,0);

  \draw[thick]
    ( 90-15:6mm-.5mm) -- ( 90-15:6mm+.5mm)
    ( 90+15:6mm-.5mm) -- ( 90+15:6mm+.5mm)
    (270-15:6mm-.5mm) -- (270-15:6mm+.5mm)
    (270+15:6mm-.5mm) -- (270+15:6mm+.5mm)
    ;
  \draw[thick]
    (90-15:6mm) arc[start angle=90-15,end angle=-90+15,radius=6mm]
    (90+15:6mm) arc[start angle=90+15,end angle=270-15,radius=6mm]
    ;
    }%
  }%
}
\newcommand{\WeylAlg}{%
  \mathchoice
    {\WeylAlgScaled{1}{\textstyle}}
    {\WeylAlgScaled{1}{\textstyle}}
    {\WeylAlgScaled{0.7}{\scriptstyle}}
    {\WeylAlgScaled{0.55}{\scriptscriptstyle}}%
}
\newcommand{\FullWeylAlgScaled}[2]{%
  \mathord{%
    \tikz[
      baseline=-0.5ex,
      scale=#1,
      every node/.style={transform shape}
    ]{%
      \begin{scope}
        \begin{scope}
          \path[clip] (0,0) circle[radius=6mm];
          \fill[T-region] (0,0) circle[radius=6mm];
          \filldraw[fill=G-region,draw=parabolic-defect,very thick]
            (-6mm,3mm) to[out=0,in=0,looseness=1.6]
            coordinate[pos=.5] (left-junction) (-6mm,-3mm);
          \filldraw[fill=G-region,draw=parabolic-defect,very thick]
            (6mm,3mm) to[out=180,in=180,looseness=1.6]
            coordinate[pos=.5] (right-junction) (6mm,-3mm);

          \draw[very thick,draw=weyl-defect]
            (left-junction) --
            (right-junction);

          \fill (left-junction) circle[radius=1.5pt]
                (right-junction) circle[radius=1.5pt];
        \end{scope}

        \draw[thick]
          ( 90-15:6mm-.5mm) -- ( 90-15:6mm+.5mm)
          ( 90+15:6mm-.5mm) -- ( 90+15:6mm+.5mm)
          (270-15:6mm-.5mm) -- (270-15:6mm+.5mm)
          (270+15:6mm-.5mm) -- (270+15:6mm+.5mm);

        \draw[thick]
          (90-15:6mm) arc[start angle=90-15,end angle=-90+15,radius=6mm]
          (90+15:6mm) arc[start angle=90+15,end angle=270-15,radius=6mm];
      \end{scope}
    }%
  }%
}

\newcommand{\FullWeylAlg}{%
  \mathchoice
    {\FullWeylAlgScaled{1}{\textstyle}}
    {\FullWeylAlgScaled{1}{\textstyle}}
    {\FullWeylAlgScaled{0.7}{\scriptstyle}}
    {\FullWeylAlgScaled{0.55}{\scriptscriptstyle}}%
}

\newcommand{\DigonScaled}[2]{%
  \mathord{%
    \tikz[
      baseline=-0.5ex,
      scale=#1,
      every node/.style={transform shape}
    ]{%
       \begin{scope} 
    \path[clip] (0,0) circle[radius=6mm];
    \fill[G-region] (0,0) circle[radius=6mm];
    \fill[T-region] (-6mm,3mm) -- ++(16mm,0) -- ++ (0,3mm) -- ++ (-16mm,0);
    \fill[T-region] (-6mm,-3mm) -- ++(16mm,0) -- ++ (0,-3mm) -- ++ (-20mm,0);

    \draw[very thick,parabolic-defect] (-6mm,3mm) --node[black,pos=.3,below,scale=.7] {$\widetilde{\mathcal{D}_B}$} (6mm,3mm) (-6mm,-3mm) -- node[black,pos=.7,above,scale=.7] {$\widetilde{\mathcal{D}_B}$} (6mm,-3mm);
  \end{scope}
   \draw[thick] (0,0) circle[radius=6mm];
   \pgfresetboundingbox
      \path[use as bounding box] (-19.7pt,-19.7pt) rectangle (19.7pt,19.7pt);
    }%
  }%
}

\newcommand{\Digon}{%
  \mathchoice
    {\DigonScaled{1}{\textstyle}}
    {\DigonScaled{1}{\textstyle}}
    {\DigonScaled{0.7}{\scriptstyle}}
    {\DigonScaled{0.55}{\scriptscriptstyle}}%
}

\newcommand{\DigonAlgScaled}[2]{%
  \mathord{%
    \tikz[
      baseline=-0.5ex,
      scale=#1,
      every node/.style={transform shape}
    ]{%
       \begin{scope} 
    \path[clip] (0,0) circle[radius=6mm];
    \fill[G-region] (0,0) circle[radius=6mm];
    \fill[T-region] (-6mm,3mm) -- ++(16mm,0) -- ++ (0,3mm) -- ++ (-16mm,0);
    \fill[T-region] (-6mm,-3mm) -- ++(16mm,0) -- ++ (0,-3mm) -- ++ (-20mm,0);

    \draw[very thick,parabolic-defect] (-6mm,3mm) --node[black,pos=.3,below,scale=.7] {} (6mm,3mm) (-6mm,-3mm) -- node[black,pos=.7,above,scale=.7] {} (6mm,-3mm);
  \end{scope}
   \draw[thick]
          ( 90-15:6mm-.5mm) -- ( 90-15:6mm+.5mm)
          ( 90+15:6mm-.5mm) -- ( 90+15:6mm+.5mm)
          (270-15:6mm-.5mm) -- (270-15:6mm+.5mm)
          (270+15:6mm-.5mm) -- (270+15:6mm+.5mm);

        \draw[thick]
          (90-15:6mm) arc[start angle=90-15,end angle=-90+15,radius=6mm]
          (90+15:6mm) arc[start angle=90+15,end angle=270-15,radius=6mm];
   \pgfresetboundingbox
      \path[use as bounding box] (-19.7pt,-19.7pt) rectangle (19.7pt,19.7pt);
    }%
  }%
}

\newcommand{\DigonAlg}{%
  \mathchoice
    {\DigonAlgScaled{1}{\textstyle}}
    {\DigonAlgScaled{1}{\textstyle}}
    {\DigonAlgScaled{0.7}{\scriptstyle}}
    {\DigonAlgScaled{0.55}{\scriptscriptstyle}}%
}

\usetikzlibrary{decorations.markings}
\tikzset{
  strand/.style={semithick}, 
  dstrand/.style={dotted}, 
  rdstrand/.style={semithick,dash pattern=on 0.7 off 1pt, color=red},  
  oriented/.style={postaction={decorate},
                   decoration={markings, mark=at position 0.85 with {\arrow{>}}}} 
}

\newcommand{\singlestrand}[1]{
  \vcenter{\hbox{\begin{tikzpicture}[scale=0.6]
    \draw[#1,oriented] (0,-0.6) -- (0,0.6);
  \end{tikzpicture}}}
}

\newcommand{\loopstrandJ}[1]{
  \vcenter{\hbox{\begin{tikzpicture}[scale=0.6,,xscale=-1]
    \coordinate (C) at (-0.1,0);
    \coordinate (X) at (-0.1,-0.6);
    \coordinate (Y) at (-0.1,0.6);
    \coordinate (A0) at (0.1,0.1);
    \coordinate (A1) at (0.1,-0.3);
    \coordinate (B) at (0.3,-0.1);
    \draw[#1] (A1) [out=180,in=-80] to (C);
    \draw[#1,oriented] (C) [out=100,in=-90] to (Y);
    \draw[white,line width=4.5pt] (X) [out=90,in=-100] to (C);
    \draw[white,line width=4.5pt] (C) [out=80,in=90] to (B);
    \draw[white,line width=4.5pt] (B) [out=-90,in=0] to (A1);
    \draw[#1] (X) [out=90,in=-100] to (C);
    \draw[#1] (C) [out=80,in=180] to (A0);
    \draw[#1] (A0) [out=0,in=90] to (B);
    \draw[#1] (B) [out=-90,in=0] to (A1);
  \end{tikzpicture}}}
}

\newcommand{\circlestrand}[2]{
  \vcenter{\hbox{\begin{tikzpicture}[scale=0.6]
    \ifthenelse{\equal{#2}{ccw}}{
      \draw[#1,oriented] (0,0) circle (0.5);
    }{
      \draw[#1,oriented] (0.5,0) arc[start angle=0,end angle=-360,radius=0.5];
    }
  \end{tikzpicture}}}
}

\newcommand{\Nvertex}[1]{
  \vcenter{\hbox{\begin{tikzpicture}[scale=0.6]
    \tikzset{lowarrow/.style={semithick, postaction={decorate},
                               decoration={markings, mark=at position 0.25 with {\arrow{>}}}}}

    \draw[strand,lowarrow] (-1.2,-1.2) -- (0,-0.2);
    \draw[strand,lowarrow] (-0.7,-1.2) -- (0,-0.2);

    \node at (0,-1.2) {\scriptsize $\cdots$};

    \draw[strand,lowarrow] (0.7,-1.2) -- (0,-0.2);
    \draw[strand,lowarrow] (1.2,-1.2) -- (0,-0.2);

    \filldraw (0,-0.2) circle (2pt);

    \draw[rdstrand,oriented] (0,-0.2) -- (0,1.0);
  \end{tikzpicture}}}
}

\newcommand{\dualNvertex}[1]{
  \vcenter{\hbox{\begin{tikzpicture}[scale=0.6]
    \tikzset{lowarrow/.style={semithick, postaction={decorate},
                               decoration={markings, mark=at position 0.25 with {\arrow{<}}}}}

    \draw[rdstrand,oriented] (0,-1.0) -- (0,-0.2);

    \filldraw (0,-0.2) circle (2pt);

    \draw[strand,lowarrow] (-1.2,0.8) -- (0,-0.2);
    \draw[strand,lowarrow] (-0.7,0.8) -- (0,-0.2);

    \node at (0,0.8) {\scriptsize $\cdots$};

    \draw[strand,lowarrow] (0.7,0.8) -- (0,-0.2);
    \draw[strand,lowarrow] (1.2,0.8) -- (0,-0.2);
  \end{tikzpicture}}}
}

\newcommand{\NtoNvertex}[1]{
  \vcenter{\hbox{\begin{tikzpicture}[scale=0.6]
    \tikzset{
      lowarrow/.style={semithick,postaction={decorate},
                        decoration={markings, mark=at position 0.25 with {\arrow{>}}}},
      dashedarrow/.style={semithick,postaction={decorate},
                           decoration={markings, mark=at position 0.8 with {\arrow{>}}}},
      outarrow/.style={semithick,postaction={decorate},
                        decoration={markings, mark=at position 0.25 with {\arrow{<}}}}
    }

    \draw[strand,lowarrow] (-1.2,-1.5) -- (0,-0.7);
    \draw[strand,lowarrow] (-0.7,-1.5) -- (0,-0.7);
    
    \node at (0,-1.5) {\scriptsize $\cdots$};

    \draw[strand,lowarrow] (0.7,-1.5) -- (0,-0.7);
    \draw[strand,lowarrow] (1.2,-1.5) -- (0,-0.7);

    \filldraw (0,-0.7) circle (2pt);

    \draw[rdstrand,dashedarrow] (0,-0.7) -- (0,0.0);

    \filldraw (0,0.0) circle (2pt);

    \draw[strand,outarrow] (-1.2,0.8) -- (0,0.0);
    \draw[strand,outarrow] (-0.7,0.8) -- (0,0.0);

    \node at (0,0.8) {\scriptsize $\cdots$};

    \draw[strand,outarrow] (0.7,0.8) -- (0,0.0);
    \draw[strand,outarrow] (1.2,0.8) -- (0,0.0);
  \end{tikzpicture}}}
}

\newcommand{\NtoNvertexSL}[1]{
  \vcenter{\hbox{\begin{tikzpicture}[scale=0.6]
    \tikzset{
      lowarrow/.style={semithick,postaction={decorate},
                        decoration={markings, mark=at position 0.25 with {\arrow{>}}}},
      dashedarrow/.style={semithick,postaction={decorate},
                           decoration={markings, mark=at position 0.8 with {\arrow{>}}}},
      outarrow/.style={semithick,postaction={decorate},
                        decoration={markings, mark=at position 0.25 with {\arrow{<}}}}
    }

    \draw[strand,lowarrow] (-1.2,-1.5) -- (0,-0.7);
    \draw[strand,lowarrow] (-0.7,-1.5) -- (0,-0.7);
    
    \node at (0,-1.5) {\scriptsize $\cdots$};

    \draw[strand,lowarrow] (0.7,-1.5) -- (0,-0.7);
    \draw[strand,lowarrow] (1.2,-1.5) -- (0,-0.7);

    \filldraw (0,-0.7) circle (2pt);

    \filldraw (0,0.0) circle (2pt);

    \draw[strand,outarrow] (-1.2,0.8) -- (0,0.0);
    \draw[strand,outarrow] (-0.7,0.8) -- (0,0.0);

    \node at (0,0.8) {\scriptsize $\cdots$};

    \draw[strand,outarrow] (0.7,0.8) -- (0,0.0);
    \draw[strand,outarrow] (1.2,0.8) -- (0,0.0);
  \end{tikzpicture}}}
}

\newcommand{\Nbox}[1]{
  \vcenter{\hbox{\begin{tikzpicture}[scale=0.6]
  
    \tikzset{lowarrow/.style={semithick, postaction=
                               {decorate},
                               decoration={markings, mark=at position 0.6 with {\arrow{>}}}}}

    \draw[strand,lowarrow] (-1.2,-1.5) -- (-1.2,-0.5);
    \draw[strand,lowarrow] (-0.8,-1.5) -- (-0.8,-0.5);
    \node at (0,-1) {\scriptsize $\cdots$};
    \draw[strand,lowarrow] (0.8,-1.5) -- (0.8,-0.5);
    \draw[strand,lowarrow] (1.2,-1.5) -- (1.2,-0.5);

    \draw[thick,fill=white] (-1.4,-0.5) rectangle (1.4,0.5);
    \node at (0,0) {$#1$};

    \draw[strand,lowarrow] (-1.2,0.5) -- (-1.2,1.5);
    \draw[strand,lowarrow] (-0.8,0.5) -- (-0.8,1.5);
    \node at (0,1) {\scriptsize $\cdots$};
    \draw[strand,lowarrow] (0.8,0.5) -- (0.8,1.5);
    \draw[strand,lowarrow] (1.2,0.5) -- (1.2,1.5);
  \end{tikzpicture}}}
}

\usepackage{hyperref}
\hypersetup{colorlinks=true,linkcolor=blue,citecolor=blue}

\usepackage{comment}

\usepackage{tikz}
\usepackage{tikz-cd}

\numberwithin{equation}{section}

\newtheorem{counter}{Counter}[section]

\newtheorem{definition}[counter]{Definition}
\newtheorem{definition-proposition}[counter]{Definition-Proposition}
\newtheorem{theorem}[counter]{Theorem}
\newtheorem*{theorem*}{Theorem}
\newtheorem{lemma}[counter]{Lemma}
\newtheorem{corollary}[counter]{Corollary}
\newtheorem*{corollary*}{Corollary}
\newtheorem{example}[counter]{Example}
\newtheorem{proposition}[counter]{Proposition}
\newtheorem*{proposition*}{Proposition}
\newtheorem{remark}[counter]{Remark}

\newcommand{\idty}{{\mathrm{1}\mkern-4mu{\mathchoice{}{}{\mskip-0.5mu}{\mskip-1mu}}\mathrm{l}}}
\newcommand{\1}{{\mathrm{1}\mkern-4mu{\mathchoice{}{}{\mskip-0.5mu}{\mskip-1mu}}\mathrm{l}}}
\newcommand{\CC}{\mathbb{C}}
\newcommand{\DD}{\mathbb{D}}
\newcommand{\PP}{\mathbb{P}}
\newcommand{\RR}{\mathbb{R}}
\newcommand{\ZZ}{\mathbb{Z}}

\DeclareMathOperator{\Hom}{Hom}

\DeclareMathOperator{\Fun}{Fun}
\DeclareMathOperator{\Rep}{Rep}
\DeclareMathOperator{\ev}{ev}
\DeclareMathOperator{\SL}{SL}
\DeclareMathOperator{\PGL}{PGL}
\DeclareMathOperator{\Frac}{Frac}
\DeclareMathOperator{\Dist}{Dist}
\renewcommand{\sl}{\mathfrak{sl}}

\newcommand{\A}{\mathcal{A}}
\newcommand{\B}{\mathcal{B}}
\newcommand{\C}{\mathcal{C}}
\newcommand{\D}{\mathcal{D}}
\newcommand{\E}{\mathcal{E}}
\newcommand{\G}{\mathcal{G}}
\newcommand{\K}{\mathcal{K}}
\newcommand{\M}{\mathcal{M}}

\newcommand{\T}{\mathcal{T}}
\renewcommand{\t}{\mathfrak{t}}
\newcommand{\g}{\mathfrak{g}}
\renewcommand{\L}{\mathcal{L}}

\renewcommand{\P}{\mathcal{P}}

\newcommand{\op}{\text{op}}
\newcommand{\bop}{\text{bop}}
\newcommand{\mop}{\text{mop}}
\newcommand{\Zdr}{\mathrm{Z}_{\text{Dr}}}

\renewcommand{\mod}{\mathrm{mod}}

\newcommand{\minus}{{\scalebox{0.6}[1.0]{$-$}}}

\newcommand{\DDsurf}{\DD_{1}}

\DeclareMathOperator{\Vect}{Vect}
\DeclareMathOperator{\Cat}{Cat}
\DeclareMathOperator{\Bimod}{Bimod}

\DeclareMathOperator{\Sk}{Sk}

\DeclareMathOperator{\SkCat}{SkCat}
\DeclareMathOperator{\SkAlg}{SkAlg}
\DeclareMathOperator{\BrTens}{BrTens}
\DeclareMathOperator{\Alg}{Alg}

\DeclareMathOperator{\id}{id}

\DeclareMathOperator{\tr}{tr}
\DeclareMathOperator{\RT}{RT} 
\DeclareMathOperator{\Rib}{Rib}
\DeclareMathOperator{\Planar}{Planar} 

\usepackage{xcolor}
\definecolor{our-orange}{HTML}{D86600}
\definecolor{our-blue}{HTML}{9BBBD6}
\definecolor{our-defect}{HTML}{77CCAA}
\definecolor{skein}{HTML}{8C0A60}

\title{Weyl defects in skein theory and quantum cluster charts}
\author{\parbox{\textwidth}{\centering Jennifer Brown, Juan Ramón Gómez García, David Jordan, and Matthias Vancraeynest}}

\begin{document}

\maketitle

\begin{abstract}

    We prove a modified invertibility property for the parabolic defects introduced in \cite{JLSS2021,Brown_Jordan_2025}. Namely, we show that the Borel defect cancels its dual, up to insertion of an invertible Weyl defect and up to restriction to an open-subcategory.  The main result holds for an arbitrary reductive group $G$ -- for illustration we give extended computations for $G=\SL_3$.
    
    Along the way, we introduce a defect skein theory with defects in both codimension one and two, which is compatible with gluing of defect 3-manifolds and defect surfaces.  Our results provide a potential defect skein theoretic construction of certain standard charts which appears frequently in quantum cluster varieties associated to surfaces.
\end{abstract}

\tableofcontents
\section{Introduction}

At the heart of geometric representation theory is the desire to express structures of a reductive group $G$ combinatorially via its Cartan subgroup $T$, and the associated Weyl group $W$.  Among the most useful and ubiquitous tools are parabolic induction/restriction along the Borel correspondence $T\twoheadleftarrow B\hookrightarrow G$, and the analysis of the resulting Weyl group symmetry.  This framework yields many of the most fundamental and spectacular successes of geometric representation theory:  a non-exhaustive list includes the Weyl character formulas \cite{Weyl1925,Weyl1926}, Borel-Weil-Bott theorem \cite{serre1954representations,bott1957homogeneous}, Beilinson--Bernstein localisation \cite{BeilinsonBernstein1981,Beilinson1983}, Springer theory \cite{Springer1976,Springer1978} and Lusztig's subsequent theory of character sheaves \cite{Lusztig1985a,Lusztig1985b,Lusztig1986a,Lusztig1986b}, Soergel bimodules \cite{soergel2007kazhdan,soergel1990kategorie} and KLR algebras \cite{Rouquier2008,KhovanovLauda2010,KhovanovLauda2011}. Indeed, from a certain perspective the entire geometric Langlands program is possible because of Hecke modifications, yet another instance of this paradigm. 

It has become increasingly understood in recent years that the skein theory associated to a reductive group $G$ is really concerned with a multiplicative deformation of the geometric representation theory of $G$ (specifically, via the Betti geometric Langlands program \cite{BenZvNadler2018}). Hence it is natural to expect that parabolic induction/restriction functors should play an equally important role in skein theory.
Indeed many important constructions in quantum topology -- quantum trace and abelianisation/non-abelianisation \cite{bonahon2011quantum,Le_2017,korinman2022quantum,Panitch_Park_2024,Garoufalidis_Yu_2024}, quantum cluster coordinates \cite{Fock_Goncharov_2009a,Fock_Goncharov_2009b}, spectral networks \cite{gaiotto2013spectral} and the quantum $A$-polynomial \cite{Garoufalidis_2004,Dimofte2011,Brown_Jordan_2025} -- all relate a complicated $G$-skein theory with a much simpler and more combinatorial $T$-skein theory, and so it is natural to seek a uniform explanation via parabolic induction/restriction.  This is certainly well-understood as a \emph{motivation} and \emph{interpretation} for those structures, however the foundations for parabolic induction/restriction in skein theory are much newer and not yet fully realised in these applications.

Our main motivation therefore is to build up the parabolic-induction and Weyl group toolkits fully locally, and to apply them uniformly to the problem of constructing geometric quantum cluster charts. 
Locality means roughly that one establishes higher Morita theory of the ribbon tensor category controlling the skein theory of the ball, and exports these using extended TQFT techniques to conclude properties on arbitrary surfaces and 3-manifolds.

Parabolic induction/restriction functors as codimension-one defects in skein theory have been recently introduced in \cite{Brown_Jordan_2025}, following \cite{JLSS2021}.
The motivating example in those works is the \emph{parabolic defect} $\D_B$, situated between $G$- and $T$-skeins, which implements parabolic reduction along the Borel correspondence $T\twoheadleftarrow B\hookrightarrow G$. These allow us to consider skein theories of bipartite surfaces and 3-manifolds, where open regions are coloured by $G$ and by $T$, with interfaces coloured by $B$. 
Our main new tool are \emph{Weyl defects}, which encode the Weyl group action on the weight lattice of $T$, and a further compatibility with parabolic induction/restriction functors.  Skein-theoretically, the Weyl group \emph{action} is modelled straightforwardly via invertible codimension-one defects in the $T$-skein theory, and the \emph{compatibility} with parabolic induction/restriction is encoded by allowing Weyl defects to end, in codimension-two, along Borel defects.

A beautiful property of the Borel defects -- witnessed and partially exploited in \cite{JLSS2021, Brown_Jordan_2025} in the case $G=\SL_2$ -- is that they are ``almost invertible".  By this, we mean that the composition of the Borel defect $\D_B$ with its dual defect $\D_B^\vee$ becomes invertible upon localisation to a natural subcategory.
This restriction corresponds cleanly to the localisation of cluster variables associated to edges of a triangulation in a geometric cluster chart of a decorated character variety.

Our main result interprets this key localisation step in terms of the generic invertibility of parabolic defects.
Stated precisely, it goes:
\begin{theorem*}
[Thm. \ref{thm:borel-almost-invertible}]
    Let $\widetilde{\D_B}$ be the redecorated parabolic defect associated to the Borel correspondence $T\twoheadleftarrow B\hookrightarrow G$. Denote by $w_0$ the longest word in the Weyl group and by $\T_{w_0}$ the Weyl associated defect. Then we have an equivalence of categories,
    \begin{align}
         \widehat{\SkCat}(\Digon)[A_\rho^{-1}] \cong \widehat{\SkCat}( \FullWeylCat )\label{eqn:weylcat}
     \end{align}
    induced by a $T\times T^{bop}$-equivariant isomorphism of internal skein algebras,
    \begin{equation}
      \SkAlg^{int}(\DigonAlg)[A_\rho^{-1}] \cong \SkAlg^{int}(\FullWeylAlg).
    \end{equation}
Here, we have localised the skein category and internal skein algebra at the skein $A_\rho$ connecting the two gates labelled by $V_\rho$ in the $G$-region. We use $\widehat{\cdot} \cong \Fun(\cdot^{op},\Vect)$ to denote the free co-completion of a linear category.
\end{theorem*}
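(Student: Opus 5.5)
The plan is to reduce the equivalence of categories \eqref{eqn:weylcat} to the stated isomorphism of internal skein algebras, and then to prove that isomorphism by an explicit algebraic computation.

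\textbf{Reduction to internal skein algebras.} Both sides are skein categories of a disk with two marked intervals on the boundary. Both boundary intervals carry $T$-regions, with the gates drawn. Each side is generated, as a module category over $\SkCat(T)^{\boxtimes 2}$, by the empty skein at the gates. This is the distinguished object whose internal endomorphism algebra is the internal skein algebra. By the monadic reconstruction theorem of Gunningham--Jordan--Safronov type (assumed from the earlier sections on defect skein theory and gluing), $\widehat{\SkCat}$ of each side is equivalent to the category of modules for its internal skein algebra in $\widehat{\Rep_q T\boxtimes \Rep_q T^{\bop}}$.

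Localisation at $A_\rho$ is compatible with this: inverting $A_\rho$ on the category side corresponds to inverting the image of $A_\rho$ in the internal algebra, which is a $T\times T^{\bop}$-weight vector of the appropriate degree and Ore. Hence a $T\times T^{\bop}$-equivariant algebra isomorphism yields \eqref{eqn:weylcat}.

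\textbf{Computing the left-hand algebra.} Gluing the two Borel defects to the $G$-strip, I will express $\SkAlg^{int}(\DigonAlg)$ via parabolic restriction. The $G$-strip contributes $\mathcal{O}_q(G)$, the reflection-equation algebra for the bigon. Restricting to $B$ on both sides and taking the quotient along $B\twoheadrightarrow T$ should give $\mathcal{O}_q(N\backslash G/N)$ (quantum base affine double coset), with its natural $T\times T$ grading.

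The skein $A_\rho$ connecting the two $V_\rho$ gates is the matrix coefficient $\langle v_{\text{low}}^*, \cdot\, v_{\text{high}}\rangle$ of $V_\rho$, taken between a highest weight vector and a lowest weight vector of the dual. This is the principal minor $\Delta_{w_0\rho,\rho}$, whose nonvanishing locus is the big Bruhat cell $Bw_0B$.

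\textbf{Computing the right-hand algebra and matching.} For $\FullWeylAlg$, the Weyl defect $\T_{w_0}$ is invertible in the $T$-theory. The two codimension-two junctions identify the $T$-gradings on the two ends via $w_0$. So the algebra should be the quantum torus $\mathcal{O}_q(T)$, twisted so that its left and right $T$-actions differ by $w_0$.

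On the left, the localised algebra $\mathcal{O}_q(N\backslash Bw_0B/N)\cong\mathcal{O}_q(T w_0)$ is also a quantum torus generated by the minors $\Delta_{w_0\omega_i,\omega_i}^{\pm1}$. I will define the map by sending the generator of weight $\omega_i$ (the arc through the Weyl defect labelled $\omega_i$) to $\Delta_{w_0\omega_i,\omega_i}$. Equivariance is then built in, and relations are checked by quantum Plücker / $q$-commutation of flag minors.

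\textbf{Main obstacle.} The hardest step is proving that the localised $\mathcal{O}_q(N\backslash G/N)[\Delta_\rho^{-1}]$ is exactly generated by these minors, with no extra elements. This is the quantum analogue of the Bruhat decomposition $N\backslash Bw_0B/N\cong T$. I would attack it with a PBW/crystal-basis filtration argument: show that every other matrix coefficient becomes a Laurent monomial in the $\Delta_{w_0\omega_i,\omega_i}$ after multiplying by a power of $\Delta_\rho$, using that extremal vectors span each weight line.
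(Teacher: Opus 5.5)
Your proposal has a genuine gap in the reduction step. You assert that each disk is generated over $\Rep_qT\boxtimes\Rep_qT^{\bop}$ by the empty skein at the $T$-gates, and you invoke GJS-type monadic reconstruction. But in both $\PP_2$ and the Weyl disk the $G$-regions carry no gate, so that theorem does not apply as stated. Worse, before localisation, which is where you apply it, the claim is false.

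To see this, add a $G$-gate. Then one does have $\widehat{\SkCat}(\PP_2)\simeq O_q(G^+/N)^{\otimes 2}-\mod_{G\times T\times T^{\bop}}$, and closing the $G$-gate is the functor of $U_q(\g)$-invariants. Reconstruction from the $T$-gates alone therefore amounts to this functor being an equivalence. Since it is exact, that is the same as it being conservative, and it is not conservative. Take the augmentation $\epsilon$ onto the degree-zero part of the $\Lambda^+\times\Lambda^+$-graded algebra $O_q(G^+/N)^{\otimes 2}$, and a nontrivial irreducible $V\in\Rep_qG$. The module $\epsilon^*V$ is nonzero but has no invariants. This is the quantum shadow of $G$ not acting freely on non-generic pairs of decorated flags. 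So you cannot reconstruct first and localise afterwards.

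What is actually needed is that, after inverting $A_\rho$, the invariants functor becomes an equivalence. This is Proposition \ref{prop:G-invariants-equivalence}, and its real content is Proposition \ref{prop:generic-position-flags-iso}: $\eta\colon O_q(G^+/N)^{\otimes 2}[A_\rho^{-1}]\cong O_q(T)\otimes O_q(G^+)$, i.e. generic pairs of decorated flags form a $G\times T$-torsor. Given that isomorphism, conservativity reduces to the equivalence $O_q(G^+)-\mod_{G\times T\times T^{\bop}}\simeq\Rep_q(T\times T^{\bop})$.

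The Weyl disk has the same issue, since its two $G$-half-disks are also ungated. The paper handles this as follows:
\begin{itemize}
\item it splits the Weyl disk by excision and reconstructs each half with a $G$-gate;
\item it identifies each half's algebra with $\SkAlg^{int}_{G\times T\times T^{\bop}}(\PP_2)[A_\rho^{-1}]$, using Lemma \ref{lemma: skein passing through point defect};
\item it then closes the $G$-gates using the same proposition.
\end{itemize}
Your description of the Weyl side as a $w_0$-twisted quantum torus is the right answer, but it needs an argument of this kind.

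Conversely, the step you single out as the main obstacle is immediate. By Peter--Weyl, $(O_q(G^+/N)^{\otimes 2})^{U_q(\g)}\cong\bigoplus_{\lambda\in\Lambda^+}\chi_\lambda\boxtimes\chi_{-w_0\lambda}$. Each summand is one-dimensional and spanned by $A_\lambda$, and $A_\lambda A_\mu=q^{2(\lambda,\mu)}A_{\lambda+\mu}$. Inverting $A_\rho$ therefore just gives the twisted group algebra of $\Lambda$, with no PBW or crystal-basis argument required.

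A smaller point: the $G$-strip with two $G$-gates gives the twisted FRT algebra $O_q(G^+)$, not the reflection equation algebra.
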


Recall that skein categories are fully local, in the sense that the skein category of any surface can be computed from the skein category of the disk by repeatedly applying \emph{excision}.  More universally and directly, factorisation homology constructs the skein category as colimit, indexed over a suitable category of disk embeddings,  of the skein category of a disk.  In a similar way, defect skein categories can be constructed via stratified factorisation homology, and subsequently computed by excision.  Here, the initial data is the skein category of the disk, as well the skein category of all basic defect disks.

As a consequence of this locality, we obtain the following relation between skein categories of decorated and Weyl defect surfaces.

\begin{corollary*}[Cor. \ref{cor:general-surfaces-invertibility}]
  Let $\Sigma$ be an arbitrary redecorated surface, denote by $\Delta$ a collection of arcs on $\Sigma$ connecting boundary $T$ regions, by  $\widehat{\SkCat}(\Sigma)[\Delta^{-1}]$ the localisation of the skeins $A_\rho$ situated along each arc, and by $\Sigma_\triangle$ the Weyl defect surface where we compose the parabolic defects locally along each arc, as in \eqref{eqn:weylcat}. 
  Then we have an equivalence of categories,
\begin{align}
         \widehat{\SkCat}(\Sigma)[\Delta^{-1}] \cong \widehat{\SkCat}( \Sigma_\triangle ),
     \end{align}
    induced by an isomorphism of internal skein algebras
    \begin{equation}
      \SkAlg^{int}(\Sigma)[\Delta^{-1}] \cong \SkAlg^{int}(\Sigma_\triangle).
    \end{equation}
\end{corollary*}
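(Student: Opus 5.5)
The corollary should follow from the local statement, Theorem \ref{thm:borel-almost-invertible}, by excision for defect skein categories. The idea is to cut $\Sigma$ along the arcs of $\Delta$, apply the local theorem to each arc neighbourhood, and reglue.

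First, for each arc $\delta\in\Delta$ choose a closed tubular neighbourhood $N_\delta$. Because $\delta$ connects two boundary $T$-regions and crosses the $G$-region between two parabolic defects, $N_\delta$ is a defect disk isomorphic to $\Digon$. Its two gates lie on the boundary $T$-regions at either end, and its two remaining boundary intervals lie inside the $T$-regions adjacent to the defects. Write $\Sigma = \Sigma' \cup_{\partial} \bigsqcup_\delta N_\delta$, where $\Sigma'$ is the closure of the complement and the gluing happens along the collared $T$-intervals on the sides of each digon. By excision for defect skein categories (stratified factorisation homology), we get
\[
\widehat{\SkCat}(\Sigma)\simeq \widehat{\SkCat}(\Sigma')\boxtimes_{\bigboxtimes_\delta \widehat{\SkCat}(T\text{-intervals})}\bigboxtimes_\delta \widehat{\SkCat}(N_\delta).
\]
On internal skein algebras this becomes the corresponding relative tensor product of $T\times T^{bop}$-equivariant module algebras, after passing to modules over the internal algebras. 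The same decomposition holds for $\Sigma_\triangle$, with each $N_\delta$ replaced by $\FullWeylCat$ and $\Sigma'$ unchanged, since the Weyl surgery is supported inside the $N_\delta$.

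Second, I would check that localisation commutes with this gluing. The skein $A_\rho$ along $\delta$ is supported in $N_\delta$, so inverting it in $\SkAlg^{int}(\Sigma)$ amounts to inverting its image coming from the factor $\SkAlg^{int}(N_\delta)$. Localisation is base change along $\SkAlg^{int}(N_\delta)\to \SkAlg^{int}(N_\delta)[A_\rho^{-1}]$, and relative tensor products are cocontinuous and commute with base change. Hence $\widehat{\SkCat}(\Sigma)[\Delta^{-1}]$ is obtained by gluing $\widehat{\SkCat}(\Sigma')$ to the localised categories $\widehat{\SkCat}(N_\delta)[A_\rho^{-1}]$. This uses the Ore condition for $A_\rho$, which should already be established as part of the local theorem, so that localisation is well behaved.

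Third, I would apply Theorem \ref{thm:borel-almost-invertible} to each $N_\delta$. The crucial point is that the isomorphism there is $T\times T^{bop}$-equivariant, meaning it is compatible with the module structures over the skein categories of the two side $T$-intervals along which we glue. It can therefore be inserted into the relative tensor product, giving $\widehat{\SkCat}(\Sigma)[\Delta^{-1}]\simeq\widehat{\SkCat}(\Sigma_\triangle)$. The statement for internal skein algebras follows by taking the internal endomorphism algebra of the distinguished generating object (the empty skein), since these are computed compatibly with excision.

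The main obstacle is the second step. We must show that localising the global algebra at all $A_\rho$ agrees with gluing the local localisations. In particular, we must check that the arcs in $\Delta$ do not interact: their neighbourhoods are disjoint, but the images of the $A_\rho$ must commute up to $q$-powers and be jointly Ore. I would first try to deduce this from the fact that skeins in disjoint disks $q$-commute in the internal skein algebra, using the braiding of the $T$-boundary, together with the Ore property of each individual $A_\rho$.
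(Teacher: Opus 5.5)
There is a genuine gap, and it is in your first and third steps rather than in the Ore question. The tubular neighbourhood $N_\delta$ meets the rest of $\Sigma$ along its two long sides, which run parallel to $\delta$. Since $\delta$ crosses the $G$-region, each side is stratified as $T$, parabolic defect, $G$, parabolic defect, $T$. In the digon picture, the $G$-strip reaches the left and right boundary arcs, while the $T$-gates sit at the two ends of $\delta$, on $\partial\Sigma$. So your excision formula is relative to skein categories of intervals containing $\Rep_qG$ and two copies of $\widetilde{\D_B}$, not of $T$-intervals. To reglue, you would need the equivalence of Theorem \ref{thm:borel-almost-invertible} to be an equivalence of module categories over these $G$-containing interface categories. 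The $T\times T^{bop}$-equivariance in that theorem refers only to the two gates at the endpoints of $\delta$, which take no part in the gluing, so it does not supply this. Nor is this a technicality. The Weyl surgery changes the connectivity of the $G$-region: the $G$-strip crossing $N_\delta$ becomes two separate bumps, and a cut through the middle of the Weyl picture is a $T|\T_{w_0}|T$ interval. So the two pictures do not share an obvious common interface along which to glue a fixed complement.

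The missing ingredient is Proposition \ref{prop:G-invariants-equivalence}, on which the paper's derivation of the corollary rests. After inverting $A_\rho$ one has $O_q(G^+/N)^{\otimes 2}[A_\rho^{-1}]\cong O_q(T)\otimes O_q(G^+)$ (Proposition \ref{prop:generic-position-flags-iso}). Hence taking $U_q(\g)$-invariants is an equivalence, and the localised categories are monadically reconstructed from $T$-gates alone. This is what allows excision to be run on localised internal skein algebras without $G$-gates, after which the local equivalence can be glued in. The proof of Theorem \ref{thm:borel-almost-invertible} itself closes the $G$-gate before gluing the two halves of the Weyl picture over the Weyl interval. Your last step silently uses the same point: recovering the internal algebra as internal endomorphisms of the empty skein requires that the empty skein generate under the $T$-gate actions alone, and without $G$-gates that is exactly the content of this proposition. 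By comparison, the joint Ore condition you flag is secondary. The $q$-commutation behind the Ore property of each $A_\lambda$ is Lemma \ref{lemma:Alambda-good-properties}, which comes from $\eta(A_\lambda)$ being a scalar multiple of $\chi_{w_0\lambda}\otimes 1$.
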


Skein theory is intrinsically three dimensional, and our formalism extends naturally to skein modules of 3-manifolds.
In the non-defect setting skein theory gives a contravariant oriented (2+1)-TFT depending on a ribbon category $\A$:
\begin{equation}\label{eq:skein-TFT}
    \underline{\Sk}_\A: \mathrm{Cob}_{2+1}^{or,op} \to \Bimod.
\end{equation}
When defects are included so that multiple ribbon categories are at play, it's convenient to package the algebraic coefficient data in the cobordism category itself. We arrive at the notion of \emph{decorated 3-manifolds}, which provide cobordisms between decorated surfaces, see Definition \ref{def:decorated-manifold}.

\begin{figure}
  \centering
  \includegraphics[]{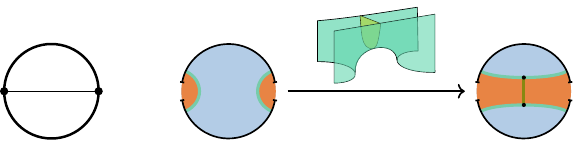}
  \caption{Localisation as a cobordism for the disk with two marked points, see Definition \ref{def:skein-cluster-functor}. Note that we have omitted the bulk regions from the cobordism.}\label{fig:abelian-cobordism}
\end{figure}

Taking advantage of \eqref{eq:skein-TFT} and using Theorem \ref{thm:borel-almost-invertible} to interpret localisation as a modification of defect data, we construct three dimensional decorated cobordism relating the un-localised and localised internal skein algebras, see Figure \ref{fig:abelian-cobordism}.
This extends to maps relating skein modules of the relevant surfaces.

\paragraph{Codimension-2 defects in skein theory}

Defects describe fundamental physical phenomena such as excitations, impurities, and interfaces, and therefore have been implemented in many physical models, including skein theory. They have been studied in blob homology, a derived generalisation of skein theory \cite{Morrison_Walker_2012}, and in Reshetikhin-Turaev theories \cite{CRS2018,Carqueville2018OrbifoldsOR,Carqueville_Meusburger_Schaumann_2020}, whose state spaces admit descriptions in terms of skein modules \cite{blanchet1995topological}.

A skein theoretic description of defects in these theories \cite{Kinnear_Runkel_2026} has opened the possibility for non-semisimple extensions of Reshetikhin-Turaev defect theory.
Recent work has established that defects model phenomena of independent interest in skein theory, including the Turaev coproduct on the HOMFLY-PT skein algebra \cite{GG2026homfly} and the quantum A-polynomial \cite{Brown_Jordan_2025}.

This paper contributes to this growing body of work by implementing Reshetikhin-Turaev evaluation and thereby skein relations in the presence of codimension-2 defects, see Section \ref{sec:3d-skein-relations}. We are mainly interested in junctions between codimension-1 defects, which arise naturally when composing defects (see Section \ref{sec:composition-junction}) or otherwise comparing them.

We emphasize that the codimension-2 defects considered in this paper are \emph{not skeins}, in that they are not ribbon graphs which locally represent morphisms in some ribbon category.

\begin{figure}
  \centering
  \includegraphics{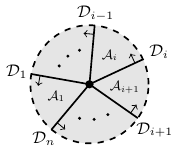}
  \caption{Codimension-2 defects as the junction of $n$ codimension-1 defects.}\label{fig:generic-junction}
\end{figure}

The local topological model for our codimension-2 defects is the stratified cylinder $C_n$ of Figure \ref{fig:annuli-and-thickened-cone}, where $n$ vertical walls meet at a central codimension-2 stratum.
Algebraic data is needed to determine the skein relations.
Let $\A_1,\ldots,\A_n$ be the ribbon categories labelling the bulk regions.
For each $i=1,\ldots,n$ taken modulo $n$, let $\D_i$ be an $(\A_i,\A_{i+1})$-central algebra decorating the interface between the $\A_i$- and $\A_{i+1}$-region, see Figure \ref{fig:generic-junction}.
Finally, let $\L$ be a left module category over the monoidal category $\tr\left(\D_1\boxtimes_{\A_2}\D_2\boxtimes_{\A_3}\cdots\boxtimes_{\A_n}\D_n\right)$.
We define a Reshetikhin-Turaev style evaluation functor for decorated ribbon graphs in $C_n$:

\begin{theorem*}[Thm. \ref{thm:skein-evaluation}] There exists a full linear functor $$\RT^n_\L : \Rib_\L^n \to \L$$ evaluating coloured ribbon graphs in the stratified cylinder $C_n$ into the underlying category $\L$.
\end{theorem*}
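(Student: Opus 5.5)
We construct $\RT^n_\L$ by imitating the classical Reshetikhin--Turaev construction: a presentation of $\Rib^n_\L$ by generators and relations, followed by an assignment on generators that is checked against each relation.

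\textbf{Step 1: normal form.} Using the projection of $C_n$ onto a vertical half-plane, or equivalently slicing the cylinder by its height coordinate, we put every coloured ribbon graph in $C_n$ in general position. Each horizontal slice is then the stratified disk with $n$ walls meeting at the centre. The boundary data of such a slice is a point configuration in the sectors (objects of the $\A_i$) and on the walls (objects of the $\D_i$). Reading around the central point, this configuration determines an object of the cyclic relative tensor product $\tr(\D_1\boxtimes_{\A_2}\cdots\boxtimes_{\A_n}\D_n)$. Acting on the chosen boundary object of $\L$ at the bottom of the cylinder gives an object of $\L$. This defines $\RT^n_\L$ on objects.

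\textbf{Step 2: elementary morphisms.} A generic ribbon graph decomposes into elementary slabs of four kinds:
\begin{itemize}
\item coupons and crossings within a single sector, evaluated by the ribbon structure of $\A_i$;
\item strands passing between a sector and a wall, evaluated by the central structure $\A_i\to \D_i\leftarrow \A_{i+1}$;
\item strands moving around the central axis from one sector to the next, evaluated by the cyclic identification in the trace, that is, the half-braidings across the $\boxtimes_{\A_j}$;
\item coupons on the central stratum, evaluated by the $\L$-module structure.
\end{itemize}
$\RT^n_\L$ is defined on each elementary slab in this way and extended by composition.

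\textbf{Step 3: independence of the decomposition.} We must show the value does not depend on the chosen slicing, i.e.\ invariance under stratified Reidemeister-type moves. This reduces to a finite list of local moves:
\begin{itemize}
\item ordinary Reidemeister and coupon moves in a sector, handled by classical RT for $\A_i$;
\item moves sliding a strand across a wall, handled by the central-algebra axioms, namely naturality and compatibility of the half-braiding with the braiding;
\item moves sliding a strand around or past the central axis, handled by the balancing and trace relations of $\tr$ and naturality of the module action.
\end{itemize}
I expect this to be the main obstacle: the moves near the codimension-2 stratum require that a full rotation of a strand around the axis is evaluated consistently. Concretely, the composite of the $n$ relative tensor identifications around the circle must agree with the twist or identity expected topologically. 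I would first verify this for $n=1$ (a single wall folded into a cone) and then reduce general $n$ by inductively collapsing adjacent walls via $\D_i\boxtimes_{\A_{i+1}}\D_{i+1}$.

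\textbf{Step 4: fullness.} Every morphism in $\L$ between objects in the image must be realised by a ribbon graph. The coupons on the central stratum are labelled by arbitrary morphisms of $\L$. Any morphism between images of objects can therefore be realised as a graph where all strands are routed to the central stratum (via wall moves and Step 2), a single central coupon carries the given morphism, and the strands are routed back out. Hence $\RT^n_\L$ is full.
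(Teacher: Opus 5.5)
Your overall architecture (generic position, local evaluation using the central structures of the walls and the balancing of $\L$, invariance under local moves) is the paper's. Your Step 4 also makes explicit something the paper's proof leaves implicit: fullness via a single coupon on the axis carrying an arbitrary morphism of $\L$, with outgoing edges bent down by rigidity of the $\D_i$ as in the $f^\vee$ construction. The problem is Step 3. Its ``main obstacle'' does not exist, and the proposed resolution targets a false statement.

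Every edge of a decorated ribbon graph lies in a single stratum, and stratified isotopies preserve strata. So a bulk strand can never leave its sector except through a coupon on a wall, and nothing can be pushed through a wall or through the axis. There is therefore no move rotating a strand around the codimension-2 line. Nor is there any relation in $\Rib^n_\L$ forcing the monodromy of a component that threads all $n$ walls to be a twist or the identity. That monodromy is genuine $\tr(\D_1\boxtimes_{\A_2}\cdots\boxtimes_{\A_n}\D_n)$-module data of $\L$: already when the walls are transparent and the axis is just a strand coloured by $\ell$, going once around it gives the double braiding with $\ell$. Verifying it for $n=1$ and inducting by collapsing walls would therefore either stall or prove something that is not needed.

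What has to be checked near the axis is purely local. Project onto the \emph{union} of the walls, i.e. the paper's retraction $p$ onto $C(\{1,\dots,n\})\times[0,1]$. A projection onto a single vertical half-plane would superimpose the walls, and it is the retraction that makes ``reading'' a slice with several bulk points in one sector unambiguous. A bulk strand of sector $i$ that switches from the wall on one side of its sector to the wall on the other then crosses the axis and acquires the balancing $\eta_{x,\ell}$ or its inverse. This, rather than a strand ``moving from one sector to the next'', is what your third bullet in Step 2 should say.

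Invariance then reduces to four checks:
\begin{itemize}
\item invertibility of $\eta$ (defect Reidemeister II);
\item compatibility of $\eta$ with the monoidal structure, together with naturality (defect Reidemeister III);
\item compatibility of $\eta$ with the half-braidings of both adjacent walls;
\item naturality of $\eta$ in $\L$ (sliding axis coupons past crossings).
\end{itemize}
These are exactly the axioms of the $\A_i$-centered $(\D_i,\D_{i-1}^\vee)$-bimodule structures that a trace-module unpacks to (Lemma \ref{lemma:unpacking-line-data}). They are the content of Lemmas \ref{lemma:bulk-and-planar-isotopy-inv1}, \ref{lemma:bulk-and-planar-isotopy-inv2}, \ref{lemma:bulk-and-planar-isotopy-inv4} and Corollary \ref{cor:bulk-and-planar-isotopy-inv3}. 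Put that list in place of your obstacle and normalise orientations by passing to dual defects (Remark \ref{rmk:orientation_and_evaluation_morphism}). Your height-slicing argument then becomes the paper's retract-then-evaluate-planarly proof.
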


The proof of Theorem \ref{thm:skein-evaluation} centres around showing that the evaluation is unaffected under a host of choices, notably stratified isotopy representative of decorated ribbon graphs in $C_n$. 

\subsection{Organisation of the paper}\label{sec:organisation}

Section \ref{sec:defects} introduces codimension-2 defects in skein theory, starting with a review of stratified spaces. Section \ref{sec:surface_defects} recaps the codimension-1 defect theory then introduces the notion of dual defects and their relation to orientation.
Section \ref{sec:centered-bimod} starts with a review of the algebraic structures and operations needed to define and work with codimension-2 defects, then goes on to define decorated ribbon graphs (Definition \ref{def:decorated-ribbon-graph}), which live in decorated 3-manifolds (Definition \ref{def:decorated-manifold}) and form the basis for skeins in the presence of codimension-2 defects.

These are all setup for Section \ref{sec:evaluation}, which contains a construction of Reshetikhin-Turaev style evaluation of decorated ribbon graphs in the neighbourhood of a codimension-2 defect. In the interest of being self-contained, this is followed in Section \ref{sec:skein-modules-and-categories} by a review of skein categories and modules along with the associated internal constructions. Finally Section \ref{sec:composition-junction} introduces the special case of codimension-2 defects arising from the composition of codimension-1 defects.

Section \ref{sec:weyl-and-parabolic-defects} focuses on an important example in codimension-1: the Weyl defect. The relationship to parabolic defects is established in Section \ref{sec:weyl-parabolic-junction}, and this is used to formulate the generic invertibility of parabolic defects in Section \ref{sec:invertible-statement}. This is followed by the formulation of localisation as a stratified cobordism in section \ref{sec:localisation-cobordism}.

Section \ref{sec:monadic-reconstruction-without-G} establishes that taking $G$-invariants (an algebraic procedure which restricts where skeins can end) is a categorical equivalence in an appropriate localisation of freely co-completed skein categories. This result is used in Section \ref{sec:proof-of-generic-invertibility} to prove generic invertibility of parabolic defects.

Finally, Section \ref{sec:quantum-cluster-algebras} uses the parabolic defect theory developed here and in \cite{Brown_Jordan_2025} to study the relationship between decorated skein theory and quantum cluster structures on decorated character varieties of surfaces.
It starts in Section \ref{sec:clusters-background} with a review of quantum cluster algebras, then goes on in Section \ref{sec:computations-SL3-PGL3} to prove their equivalence for the triangle and $G=\SL_3\CC$, recovering a result of \cite{IY22}.

\subsection{AI disclosure}\label{sec:AI-disclosure}

Claude Sonnet was used to create TikZ figures, to search for additional references, and for proof-reading/spell-checking. All computations in the final article were done by the authors and no AI agents contributed writing or editing.

\subsection{Acknowledgements}\label{sec:acknowledgements}
We would like to thank Alexander Shapiro and Gus Schrader for sharing the groundwork for Section \ref{sec:monadic-reconstruction-without-G}, unpublished work stemming from a previous collaboration with the third author.

The work of JB was funded by NSF grant DMS-2202753 and by the ERC under the EU's Horizon 2020 programme grant agreement No 948885.
The work of DJ was supported by the EPSRC Open Fellowship “Complex Quantum Topology”, grant number EP/Y008812/1 and by the Simons Foundation award 888988 as part of the Simons Collaboration on Global Categorical Symmetry.

\section{Codimension-2 defects in skein theory}\label{sec:defects}

Here we extend the framework of \cite{Brown_Jordan_2025} to include codimension-2 defects as interfaces between codimension-1 defects.
Our approach follows the philosophy that defects are determined by high morphisms in an appropriate Morita category, as developed in \cite{Haugseng_2017,Johnson-Freyd_Scheimbauer_2017}. 

\begin{figure}
\centering
\includegraphics{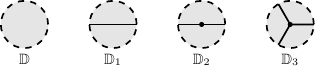}
\caption{Defects are modelled locally on thickened stratified disks. We will denote the trivially stratified disk by $\DD$, and use $\DD_n$ to denote a disk with $n$ codimension-1 strata as shown here.}\label{fig:little-disks}
\end{figure}
We recall first the basic notions on stratified spaces that we will use throughout the section:

\begin{definition}
    \emph{\textbf{(i)}} Let $P$ be a poset endowed with the topology whose open sets are generated by $P_{>p}\coloneqq\{x\in P\mid x>p\},$ for $p\in P$. A \emph{\bf stratified manifold indexed by $P$} is an oriented manifold $X$ equipped with a continuous map $\phi\colon X\to P,$ such that $X_p\coloneqq\phi^{-1}(\{p\})$ is a submanifold for each $p\in P.$ We call $X_p$ a \emph{\bf stratum} and assume that each stratum comes with an orientation.

    \noindent\emph{\textbf{(ii)}} A \emph{ \bf morphism of stratified manifolds} between $(X,\phi_1\colon X\to P)$ and $(Y,\phi_2\colon Y\to Q)$ consists of a smooth map $f\colon X\to Y$ and a continuous map $F\colon P\to Q$ such that 
    \begin{equation}\begin{tikzcd}
        X \arrow[d, "\phi_1"'] \arrow[r, "f"] & Y \arrow[d, "\phi_2"] \\
        P \arrow[r, "F"]                      & Q                    
\end{tikzcd}\end{equation} is a commutative diagram. This means that $f(X_p)\subseteq Y_{F(p)}$, for every $p\in P$. If $f$ is an embedding, we say that $(f,F)$ is a \emph{\bf stratified embedding}.

\noindent\emph{\textbf{(iii)}} A \emph{\bf stratified isotopy} between two embeddings $(f,F),(g,G)\colon(X,\phi_1)\to(Y,\phi_2)$ consists of a smooth map $h\colon X\times I\to Y$ and a continuous map $H\colon P\to Q$ such that the diagram 
\begin{equation}\begin{tikzcd}
        X\times I \arrow[d, "\phi_1\circ \mathrm{pr_1}"'] \arrow[r, "h"] & Y \arrow[d, "\phi_2"] \\
        P \arrow[r, "H"]                                                 & Q,                    
\end{tikzcd}\end{equation} where $\mathrm{pr}_1$ is the projection onto the first factor, is commutative. A \emph{\bf stratified isotopy of $X$} is a stratified isotopy $(h,H)\colon X\times I\to X$ such that $h(-,0)=\id_X$ and $H=\id_P$.
\end{definition}

The decorated ribbon graphs (Definition \ref{def:decorated-ribbon-graph}) which form the basis for skeins will be taken up to stratified isotopy. This means that in particular a skein can not be made to intersect or stop intersecting a defect via isotopy, only via skein relations.

\subsection{Codimension-1 defects: a review}\label{sec:surface_defects}
We give a brief review of the theory with only codimension-1 defects, see \cite{Brown_Jordan_2025} for full details. 

In what follows, let $\A^{\bop}$ denote the braided opposite (with braiding $c^{-1}_{y,x} : x\otimes y \to y\otimes x$), $\A^{\mop}$ the monoidal opposite (with braiding $c^{-1}_{x,y} : x\otimes^{\op} y \to y\otimes^{\op} x$) and $\A^{\op}$ the usual opposite of a braided monoidal category $\A$.

Let $M$ be a bipartite $3$-manifold \cite[Def. 2.2]{Brown_Jordan_2025} and let $s\in\pi_0(M_2)$. The orientation of $s$ determines an outward normal vector field that, by convention, points towards the \emph{target} bulk region $s_1\in\pi_0(M_3)$. Let $s_0\in\pi_0(M_3)$ be the \emph{source} bulk region and suppose that $s_0$ and $s_1$ are labelled with ribbon categories $\C$ and $\A$, respectively. Then, the algebraic data of the codimension-1 defect $s$ is a 1-morphism $\C\to\A$ in $\BrTens$, given by the following structure. 

 If $\A,\C$ are braided monoidal categories, a $1$-morphism $\C \to \A$ is an algebra object $\D$ in the category of $(\A,\C)$-bimodules or, equivalently \cite[Def-Prop. 3.2]{BJS2021}, a braided tensor functor 
 \begin{equation}
     (H,\sigma): \A\boxtimes\C^{\bop} \to \Zdr(\D),
 \end{equation} where $\Zdr(\D)$ is the \emph{Drinfeld centre} of $\D$. We will call $\D$ a \emph{$(\A,\C)$-central algebra} and denote the coherence isomorphisms by 
 \begin{equation}
 J^H : H(-) \otimes H(-) \Rightarrow H(-\otimes -)\quad\text{and}\quad J_0^H\colon\boldsymbol{1}_\D\Rightarrow H(\boldsymbol{1}_\C).
 \end{equation}
If $\D$ is pivotal, $\A,\C$ are ribbon, and the central structure respects the pivotal isomorphisms, then we get an evaluation functor \cite[Def-Prop. 2.16]{Brown_Jordan_2025}
\begin{equation}
  \RT_\D : \Rib_\D \to \D
\end{equation}
which generalises Reshetikhin-Turaev evaluation of coloured ribbon graphs. Here $\Rib_\D$ is the category whose hom-spaces are spanned by ribbon graphs in a stratified cylinder with a vertical defect wall (see \cite[Definition 2.11]{Brown_Jordan_2025}). With or without defects, two linear combinations of coloured ribbon graphs are \emph{skein equivalent} when they evaluate to the same morphism:
\begin{equation}
  \sum_{i=0}^n a_i \Gamma_i \sim \sum_{i=0}^m b_i \Gamma_i' \quad \Leftrightarrow \quad \RT\left(\sum_{i=0}^n a_i \Gamma_i\right) = \RT\left(\sum_{i=0}^m b_i \Gamma_i'\right).
\end{equation}
From defect skein relations it's a short step to define skein categories, functors, algebras, and modules for stratified spaces.

Before extending this story to include codimension-2 defects, we take a moment to describe some algebraic operations concerning codimension-1 defects which did not appear in \cite{Brown_Jordan_2025}.
First, there's a notion of dual defects, which are built on the adjoints of the underlying central algebras.
    Let $\D$ be an algebra object in $(\A,\C)$-bimodules, determined by a ribbon functor $\A\boxtimes \C^{\bop} \to \Zdr(\D)$.
    Taking the braided opposite produces a functor 
    \begin{equation}
        (\A\boxtimes \C^{\bop})^{\bop} \cong \C \boxtimes \A^{\bop} \to \Zdr(\D)^{\bop}
    \end{equation}.
    The functor
    \begin{equation}
        (x,\sigma : x\otimes - \to - \otimes x) \mapsto (x,\sigma^{-1})
    \end{equation}
    induces a categorical equivalence $\Zdr(\D)^{\bop} \overset{\cong}{\longrightarrow} \Zdr(\D^{\mop})$.  
    In this way $\D^{\mop}$ is the (right) adjoint of $\D$ as a 1-morphism in $\BrTens$.
\begin{definition}\label{def:dual-defect}
  The \emph{\bf dual defect $\D^\vee$} of a defect $\D$ is that induced by the central structure given by composition
  \begin{equation}
    \C \boxtimes \A^{\bop} \to \Zdr(\D)^{\bop} \to \Zdr(\D^{\mop}).
  \end{equation}
\end{definition}
 The dual defect always exists, see e.g. \cite[Prop 4.14]{Gwilliam_Scheimbauer_2018}.

\begin{lemma}\label{lemma:double-dual-defect}
  Every $\A$-central algebra $\D$ is canonically isomorphic to its double dual $\D^{\vee\vee}$.
\end{lemma}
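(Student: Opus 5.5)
The plan is to unwind Definition~\ref{def:dual-defect} twice and observe that the two operations involved are each involutive up to canonical isomorphism. Applying the definition to $\D^\vee$, which is a $(\C,\A)$-central algebra with underlying algebra $\D^{\mop}$, gives the central structure
\begin{equation}
  \A\boxtimes\C^{\bop} \cong (\C\boxtimes\A^{\bop})^{\bop} \to \Zdr(\D^{\mop})^{\bop} \to \Zdr\big((\D^{\mop})^{\mop}\big).
\end{equation}
On the underlying monoidal category, $(\D^{\mop})^{\mop}$ is literally $\D$ as a category. Its tensor product is $x\otimes^{\op\op}y = x\otimes y$, and its associators and unitors are those of $\D$ after reversing twice. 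So the identity functor, equipped with identity coherence data, is the canonical monoidal equivalence $\D\to\D^{\mop\mop}$. I would take this, together with its evident pivotal structure, as the candidate isomorphism $\D\cong\D^{\vee\vee}$.

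The content is in checking that this identification intertwines the two central structures, i.e.\ that the triangle formed by the original functor $(H,\sigma)\colon \A\boxtimes\C^{\bop}\to\Zdr(\D)$ and the doubly dualised functor above commutes up to a monoidal natural isomorphism. I would verify three things in turn.
\begin{itemize}
  \item The composite $(\A\boxtimes\C^{\bop})^{\bop\bop}\cong\A\boxtimes\C^{\bop}$ is the identity, since taking the braided opposite twice replaces $c$ by $(c^{-1})^{-1}=c$.
  \item The functor $H$ is unchanged by taking $\bop$ twice: as a monoidal functor $H^{\bop\bop}=H$, with the same $J^H$ and $J_0^H$.
  \item The composite $\Zdr(\D)\to\Zdr(\D^{\mop})^{\bop}\to\Zdr(\D^{\mop\mop})$ sends $(x,\sigma)\mapsto(x,\sigma^{-1})\mapsto(x,(\sigma^{-1})^{-1})=(x,\sigma)$, so it is the identity under $\D=\D^{\mop\mop}$.
\end{itemize}
Composing these, the double-dual central structure is exactly $(H,\sigma)$, which gives the claimed canonical isomorphism.

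The main obstacle is bookkeeping of conventions rather than any deep input. One must confirm that the equivalence $\Zdr(\D)^{\bop}\cong\Zdr(\D^{\mop})$ really inverts the half-braiding and not, say, conjugates it by a swap of tensor factors. A swap would make the double application differ from the identity by a nontrivial but canonical twist, and one must check that this twist is the identity on objects. I would first write half-braidings in $\D^{\mop}$ explicitly as maps $x\otimes^{\op}-\to-\otimes^{\op}x$, i.e.\ $-\otimes x\to x\otimes -$, and check that $\sigma^{-1}$ is well typed there. Then I would verify the hexagon compatibility of $\sigma^{-1}$ with $J^H$ after reversal. If a nontrivial canonical twist does appear, the fallback is to exhibit it as a monoidal natural isomorphism built from the coherence data of $\D$, which still gives a canonical isomorphism $\D\cong\D^{\vee\vee}$ of central algebras.

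Finally, I would check compatibility with the pivotal structures, which is needed for $\RT_\D$. The pivotal structure on $\D^{\mop}$ is induced from that of $\D$, and reversing twice returns the original, so the identification is pivotal.
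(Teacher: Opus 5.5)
\textbf{There is a genuine gap.} Your overall route matches the paper's: unwind the definition twice and note that the half-braiding returns because $(\sigma^{-1})^{-1}=\sigma$. The paper, however, treats that as the easy part. The actual content is showing that the monoidal coherence of $H$ is recovered, and your proposal does not establish this.

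The equivalence $\Zdr(\D)^{\bop}\to\Zdr(\D^{\mop})$, $(x,\sigma)\mapsto(x,\sigma^{-1})$, cannot be strict monoidal. The tensor product of the images is $x\otimes^{\op}y=y\otimes x$, while the image of the tensor product is $x\otimes y$, so its coherence isomorphism is a braiding. Consequently the central functor of $\D^\vee$ has the same objects and inverted half-braidings, but its coherence is the twisted map
\[
(J^H)^\vee_{x,y}\colon H(y)\otimes H(x)\xrightarrow{J^H_{y,x}}H(y\otimes x)\xrightarrow{H(\beta^\A_{x,y})^{-1}}H(x\otimes y),
\]
not $J^H$. Your third bullet checks that $\Zdr(\D)\to\Zdr(\D^{\mop})^{\bop}\to\Zdr(\D^{\mop\mop})$ is the identity on objects and half-braidings, and then concludes it is the identity. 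As a monoidal functor, though, it carries a coherence isomorphism that is a composite of two braiding isomorphisms, and you never compute it. The worry in your obstacle paragraph, a twist that might show up on objects, is misplaced: objects and half-braidings come back on the nose, and the only possible discrepancy lives in the coherence isomorphism.

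The missing step is exactly the paper's proof. In the second dualisation the braiding that enters is that of $\A^{\bop}$, and $\beta^{\A^{\bop}}_{x,y}=(\beta^\A_{y,x})^{-1}$. Therefore
\[
(J^H)^{\vee\vee}_{x,y}=H(\beta^\A_{y,x})\circ(J^H)^\vee_{y,x}=H(\beta^\A_{y,x})\circ H(\beta^\A_{y,x})^{-1}\circ J^H_{x,y}=J^H_{x,y}.
\]
Your decomposition into $H^{\bop\bop}=H$ (which does keep coherence $J^H$) followed by a centre-level functor is legitimate. You can also do the same cancellation at the level of $\Zdr(\D)$, where the two coherences are a half-braiding and its inverse.

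The point is that the reversal of the braiding under $\bop$ is what makes the twists cancel. Had the same braiding $\beta^\A$ entered both steps, you would be left with a double braiding $H\bigl((\beta^\A_{y,x}\circ\beta^\A_{x,y})^{-1}\bigr)$ rather than the identity. So this is not mere bookkeeping, and once the computation is done your unspecified fallback is unnecessary: the identification $\D=\D^{\vee\vee}$ is an equality of central structures.
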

\begin{proof}
  Let $(H,\sigma)\colon\A\to \Zdr(\D)$ be the central functor on $\D$. Then, $\D^{\vee\vee}$ is also an $\A$-central algebra with same underlying functor $H$ and half braiding $(\sigma^{-1})^{-1} = \sigma$.
  It remains to show that its monoidal structure has the same coherence isomorphism. First note that the monoidal structure $(J^H)^\vee$ on the central functor $(H,\sigma^{-1})\colon\A^{\bop}\to \Zdr(\D^{\mop})$ is given by
  \begin{equation}\label{eq:monoidal-structure}
        (J^H)^\vee_{x,y} : H(x)\otimes^{\op} H(y)=H(y)\otimes H(x)\xrightarrow{J^{H}_{y,x}} H(y\otimes x)\xrightarrow{H(\beta^\A_{x,y})^{-1}}H(x\otimes y),
    \end{equation}
    where $\beta^\A$ is the braiding of $\A$ and $J^{H}$ is the coherence isomorphism for the monoidal structure of $H.$
    Applying this formulation twice, we obtain $(J^H)^{\vee\vee}$ as 
    \begin{equation}
        H(x)\otimes H(y)=H(y)\otimes^{\op} H(x)\xrightarrow{(J^H)^\vee_{y,x}} H(y\otimes x)\xrightarrow{H(\beta^{\A^{\bop}}_{x,y})^{-1}=H(\beta^\A_{y,x})}H(x\otimes y).
    \end{equation} 
  Using the expression for $(J^H)^\vee$ in \eqref{eq:monoidal-structure} we recover the expression for $J^H$:
  \begin{equation}
      (J^H)^{\vee\vee}_{x,y} = H(\beta^\A_{y,x})\circ (J^H)^\vee_{y,x}=H(\beta^\A_{y,x})\circ H(\beta^\A_{y,x})^{-1}\circ J^{H}_{x,y}=J^H_{x,y},
  \end{equation}
  so we have an identity $\D=\D^{\vee\vee}.$
\end{proof}

\begin{figure}[t]
    \centering
    \includegraphics[scale=1.3, valign=c]{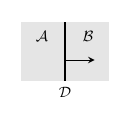}$\qquad\leftrightsquigarrow\qquad$\includegraphics[scale=1.3, valign=c]{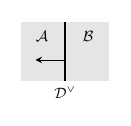} 
    \caption{
     In codimension-1 reversing orientation corresponds to reassigning the dual defect data, see Remark \ref{rmk:orientation_and_evaluation_morphism}.}
    \label{fig:defect_orientations}
\end{figure}

The operation of taking duals corresponds topologically to reversing orientations on codimension-1 defects:

\begin{remark}(Orientation of the defect and evaluation morphism.)\label{rmk:orientation_and_evaluation_morphism}
    The algebraic data of a defect is determined by the algebraic data of the bulk regions and the orientation of the defect. Reversing the orientation of a codimension-1 defect $\D$ and changing its algebraic data to $\D^\vee$ does not affect  the evaluation morphism defined in \cite{Brown_Jordan_2025}. Indeed, when we reverse the orientation, positive crossings become negative, and vice versa. For a positive crossing, the evaluation morphism for $\D$ will assign the corresponding half-braiding $\sigma^\D$. Reversing the orientation, the crossing becomes negative, so the evaluation morphism for $\D^\vee$ will send it to $(\sigma^{\D^\vee})^{-1}=\sigma^\D$.
\end{remark}

Next, the composition of two central structures $\A\overset{\D_1}{\longrightarrow}\B \overset{\D_2}{\longrightarrow}\C$ is given by the \emph{balanced (or relative) Deligne--Kelly tensor product} $\D_2\boxtimes_{\B}\D_1$ as an algebra object in $(\C,\A)$-bimodules \cite{Douglas_Schommer-Pries_Snyder_2019,BJS2021}. This is defined by a universal property and therefore up to unique isomorphism.
\begin{definition}\label{def:rel-DK-and-trace}
    Given a $(\A,\A)$-bimodule category $\mathcal{M}$, the \emph{\bf trace of $\mathcal{M}$} is a presentable category $\tr(\mathcal{M})$ together with a functor $\mathcal{M} \to \tr(\mathcal{M})$ which induces a categorical equivalence
\begin{align}
  \Hom_{\Pr}(\tr(\mathcal{M}), \mathcal{E}) \cong \Hom^{bal}_{\Pr}(\mathcal{M}, \mathcal{E})
\end{align}
    for all $\K$-linear $\E$. Here $bal$ refers to $\A$-balanced functors.

    When $\M=\D_1\boxtimes\D_2$ for $\D_1$ a right $\A$-module and $\D_2$ a left $\A$-module, the trace $\tr(\M)$ is called the \emph{\bf balanced Deligne--Kelly tensor product} and denoted $ \D_1\boxtimes_\A\D_2$.
    If $\D_1$ is a $(\B,\A)$-bimodule and $\D_2$ an $(\A,\C)$-bimodule, then $\D_1\boxtimes_\A \D_2$ will be a $(\B,\C)$-bimodule.
\end{definition}

On the level of skein categories and modules, we would like the composition of codimension-1 defects to be a local operation, i.e. one which takes place in a ball without changing the larger decorated manifold.
This preference for locality leads us to introduce codimension-2 defects between the composed and the delineated codimension-1 defects, see Section \ref{sec:composition-junction}.

\subsection{The algebraic data: centered bimodules}\label{sec:centered-bimod}

We will now dive into the algebraic structures needed to define skein relations in the presence of codimension-2 defects. 
This will culminate in the definition of a decorated 3-manifold (Def. \ref{def:decorated-manifold}.)
In Section \ref{sec:evaluation} we construct the evaluation functor that determines the precise relations.

Let $\D_1,\D_2$ be $(\C,\A)$-central algebras with central functors $(H_i,\sigma^i) : \C\boxtimes\A^{\bop} \to \Zdr(\D_i)$.
A \emph{codimension-2 defect} connecting them is determined by a 2-morphism $\L$ in $\BrTens$:
\begin{equation}
    \begin{tikzcd}
\A \arrow[r, bend left=50, ""{name=U,below},"\D_1"{above}]
\arrow[r, bend right=50, "\D_2"{below}, ""{name=D}]
& \C.
\arrow[Rightarrow, from=U, to=D,"\L"]
\end{tikzcd}
\end{equation}
These are given by centered bimodules:

\begin{definition}\label{def:centered-bimodule}
    Let $\A$ be a braided category and $\D_1,\D_2$ be $\A$-central algebras. An \emph{\bf $\A$-centered $(\D_1,\D_2)$-bimodule} is a $\K$-linear category $\L$ together with a $\K$-linear functor 
    \begin{equation}
       \D_1 \boxtimes \L \boxtimes \D_2 \to \L;\qquad d_1\boxtimes\ell\boxtimes d_2 \mapsto d_1\triangleright\ell\triangleleft d_2
    \end{equation}
    with bimodule associators
    \begin{equation}
      \begin{aligned}
        (a\otimes b) \triangleright \ell \xrightarrow{m^L_{a,b,\ell}} a\triangleright (b\triangleright \ell), &\quad& \ell \triangleleft (c\otimes d) \xrightarrow{m^R_{c,d,\ell}} (\ell \triangleleft c) \triangleleft d
      \end{aligned}
    \end{equation}
    which are compatible with each other, and both of which satisfy the pentagon  and triangle relations \cite{EGNO}. Together, $m^L$ and $m^R$ endow $\L$ with the structure of a $(\D_1,\D_2)$-bimodule category.
    This comes with a natural transformation called the \emph{\bf $\A$-centered structure} or \emph{\bf balancing}:
    \begin{equation}\label{eq:balancing}
        \eta_{x,\ell} : H_1(x)\triangleright \ell \overset{\cong}{\longrightarrow} \ell\triangleleft H_2(x),
    \end{equation}
    such that the following diagrams commute:
    \medskip

    \noindent\textbf{Left half-braiding:}
    \begin{equation}\label{eq:left-half-braiding}
        \begin{tikzcd}[row sep=large, column sep=large]
            (H_1(x) \otimes d_1) \triangleright \ell \arrow[r, "m^L"] \arrow[d, "\sigma^1_{H_1(x),d_1} \triangleright \id_\ell"'] 
            & H_1(x) \triangleright d_1 \triangleright \ell \arrow[r, "\eta_{x,d_1\triangleright\ell}"] 
            & d_1 \triangleright \ell \triangleleft H_2(x) \\
            (d_1 \otimes H_1(x)) \triangleright \ell \arrow[r, "m^L"'] 
            & d_1 \triangleright H_1(x) \triangleright \ell \arrow[ur, "\id_{d_1} \triangleright \eta_{x,\ell}"'] 
            &
        \end{tikzcd}
    \end{equation}

    \noindent\textbf{Right half-braiding:}

    \begin{equation}\label{eq:right-half-braiding}
        \begin{tikzcd}[row sep=large, column sep=large]
            \ell \triangleleft (d_2 \otimes H_2(x)) \arrow[r, "m^R"] 
            & \ell \triangleleft d_2 \triangleleft H_2(x) 
            & H_1(x) \triangleright \ell \triangleleft d_2 \arrow[l, "\eta_{x,\ell\triangleleft d_2}"'] \arrow[dl, "\eta_{x,\ell} \triangleleft \id_{d_2}"] \\
            \ell \triangleleft (H_2(x) \otimes d_2) \arrow[u, "\id_\ell \triangleleft \sigma^2_{H_2\!(x),d_2}"] \arrow[r, "m^R"'] 
            & \ell \triangleleft H_2(x) \triangleleft d_2 
            &
        \end{tikzcd}
    \end{equation}

    \noindent\textbf{Monoidal structure:}
    \begin{equation}\label{eq:monoidal-structure-centered-bimodule}
            \begin{tikzcd}[row sep=large, column sep=large]
                H_1(x\otimes y) \triangleright \ell 
                \arrow[r, "\eta_{x\otimes y,\ell}"] 
                \arrow[d, "m^L\circ(J^{H_1}_{x,y}\triangleright\id_\ell)"'] 
                & \ell \triangleleft H_2(x\otimes y) 
                \arrow[r, "m^R\circ(\id\triangleleft J^{H_2}_{x,y})"] 
                & \ell \triangleleft H_2(x) \triangleleft H_2(y) \\
                H_1(x) \triangleright H_1(y) \triangleright \ell 
                \arrow[r, "\eta_{x,H_1(y)\triangleright\ell}"'] 
                & H_1(y) \triangleright \ell \triangleleft H_2(x) 
                \arrow[ru, "\eta_{y,\ell\triangleleft H_2(x)}"'] 
                  & 
        \end{tikzcd}
    \end{equation}
    where we have used the shorthand $d_1 \triangleright \ell \triangleleft \idty = d_1 \triangleright \ell$ and $\idty \triangleright \ell \triangleleft d_2 = \ell \triangleleft d_2$.
    
    A \emph{\bf pointed centered bimodule} is a centered bimodule with a distinguished object $\Dist \in \L$.
\end{definition}

\begin{remark}
  Given a pointed $\A$-centered $(\D_1,\D_2)$-bimodule $\L$, the pointing determines functors
  \begin{equation}
    \begin{array}{ccc}
        \D_1 & \to & \L \\
        d_1 & \mapsto & d_1\triangleright\Dist 
    \end{array}\qquad\text{and}\qquad
    \begin{array}{ccc}
        \D_2 & \to & \L \\
        d_2 & \mapsto &  \Dist\triangleleft d_2 
    \end{array}
  \end{equation}
which will feature crucially in our definition of skein relations near a codimension-2 defect.
  Evaluation in the non-pointed theory could be defined as in this article with the extra condition that every codimension-2 defect contains a skein along its entire length.
  In our motivating example of parabolic and Weyl defects, $\L$ typically comes from a monoidal category with distinguished object the monoidal unit.
\end{remark}

As with codimension-1 defects, the algebraic data associated to a codimension-2 defect will depend on its orientation in relation to those of its adjacent codimension-1 defects $\D_1,\D_2$. Up to switching orientations and taking duals, we can always suppose that $\D_1$ and $\D_2$ have the same source $\A$ and target $\B$, as in the first and last images in Figure \ref{fig:normal-vectors}, and that the $(\A,\B)$ bimodule structures on $\L$ induced by $\D_1$ and $\D_2$ agree. In this way the codimension-2 defect $\L$ has the structure of a $(\B,\A)$-centered $(\D_1,\D_2)$-bimodule.
If the orientation on the line induced by that of the surface coincides with the actual orientation of the line, then the corresponding defect $\D_i$ acts on the left on $\L$. Otherwise it acts on the right. The following lemma guarantees that replacing an adjacent codimension-1 defect with its dual does not affect the algebraic structure of $\L$:

\begin{lemma}\label{lemma:dual-line}
  Let $\D_1$ and $\D_2$ be two $\A$-central algebras and $\L$ an $\A$-centered $(\D_1,\D_2)$-bimodule. Then, $\L$ is canonically an $\A^{\bop}$-centered $(\D_2^\vee,\D_1^\vee)$-bimodule.
\end{lemma}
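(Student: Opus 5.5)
We transport the structure along the canonical identifications between the dual algebras and the monoidally opposite categories. Recall from Definition \ref{def:dual-defect} that $\D_i^\vee$ has underlying monoidal category $\D_i^{\mop}$. Its central functor is $(H_i,\sigma^{i,-1})\colon \A^{\bop}\to \Zdr(\D_i^{\mop})$, with monoidal structure $(J^{H_i})^\vee$ given by \eqref{eq:monoidal-structure}. We keep the same underlying $\K$-linear category $\L$ and define the new actions by swapping sides:
\begin{equation}
  d_2 \,\triangleright^\vee\, \ell := \ell\triangleleft d_2, \qquad \ell\,\triangleleft^\vee\, d_1 := d_1\triangleright \ell,
\end{equation}
for $d_2\in\D_2^\vee$ and $d_1\in\D_1^\vee$. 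The new associators come from the old ones. In $\D_2^{\mop}$ we have $a\otimes^{\op} b = b\otimes a$, so $(a\otimes^{\op}b)\triangleright^\vee \ell = \ell\triangleleft(b\otimes a)$. The map $m^R_{b,a,\ell}$ identifies this with $(\ell\triangleleft b)\triangleleft a = a\triangleright^\vee(b\triangleright^\vee \ell)$, and we take this as $m^{L,\vee}$. Symmetrically, $m^{R,\vee}$ is built from $m^L$. The pentagon, triangle and compatibility axioms are the old ones with factors reversed, so these are routine.

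For the balancing we set
\begin{equation}
\eta^\vee_{x,\ell} := \eta_{x,\ell}^{-1}\colon H_2(x)\triangleright^\vee \ell = \ell\triangleleft H_2(x) \longrightarrow H_1(x)\triangleright\ell = \ell\triangleleft^\vee H_1(x).
\end{equation}
This has the correct shape, because the left-acting algebra is now $\D_2^\vee$ and the right-acting one is $\D_1^\vee$. Next we check the left half-braiding diagram \eqref{eq:left-half-braiding} for the new data. It involves the half-braiding $\sigma^{2,-1}$ on $\D_2^{\mop}$, read with the $\otimes^{\op}$ convention. After unwinding, it becomes the inverse of the old right half-braiding diagram \eqref{eq:right-half-braiding}, which commutes by hypothesis. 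Likewise, the new right half-braiding axiom is the inverse of the old left one. In both cases every arrow is invertible, so inverting the whole diagram preserves commutativity.

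The main obstacle is the monoidal-structure axiom \eqref{eq:monoidal-structure-centered-bimodule}. There the new coherence maps $(J^{H_i})^\vee_{x,y} = H_i(\beta^\A_{x,y})^{-1}\circ J^{H_i}_{y,x}$ bring in a braiding of $\A$. The plan is as follows. First, write the new diagram for the pair $(x,y)$. Second, use naturality of $\eta$ in the $\A$-variable to slide $H_1(\beta^\A_{x,y})^{-1}$ across the balancing, where it becomes $H_2(\beta^\A_{x,y})^{-1}$. These two braiding factors then cancel. What remains is exactly the inverse of the old monoidal axiom for the pair $(y,x)$, which commutes by assumption. Carrying this out carefully requires naturality of $\eta$ in $x$; this is implicit in calling $\eta$ a natural transformation in Definition \ref{def:centered-bimodule}, and we would state it explicitly.

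Finally, canonicity. The construction uses no choices. Applying it twice returns the original actions and associators and the balancing $(\eta^{-1})^{-1}=\eta$, compatibly with the identification $\D=\D^{\vee\vee}$ of Lemma \ref{lemma:double-dual-defect}. So the assignment is an involutive correspondence.
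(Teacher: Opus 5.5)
Your proposal is correct and takes the same approach as the paper. Both use the side-swapped actions and the balancing $\eta^{-1}$, identify the new half-braiding axioms with the old ones on the opposite side, and reduce the monoidal axiom to the old monoidal axiom plus naturality of $\eta$ in the $\A$-variable. You apply naturality first and the old axiom for the pair $(y,x)$ second, whereas the paper does these in the reverse order; your remarks on the associators and on involutivity via Lemma \ref{lemma:double-dual-defect} add detail the paper leaves implicit.
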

\begin{proof}
  The categories underlying $\D_1^\vee$ and $\D_2^\vee$ are $\D_1^{\mop}$ and $\D_2^{\mop}$, respectively, so $\L$ is a $(\D_2^\vee,\D_1^\vee)$-bimodule. We write
  \begin{equation}
  d_2\blacktriangleright\ell\blacktriangleleft d_1 := d_1\triangleright\ell\triangleleft d_2
  \end{equation}
  for the corresponding action.
  Let us check that the following defines an $\A^{bop}$-centered structure:
  
  \begin{equation}
  \eta^{-1}_{a,\ell}\colon H_2(a)\blacktriangleright\ell=\ell\triangleleft H_2(a)\to H_1(a)\triangleright\ell=\ell\blacktriangleleft H_1(a)
  \end{equation}
  From the definition of dual defect, the left half-braiding condition \eqref{eq:left-half-braiding} is commutativity of the diagram
  \begin{equation}
        \begin{tikzcd}[column sep=huge, row sep=large]
            (H_2(x)\otimes^{\op}d_2)\blacktriangleright\ell \arrow[d, "{\left(\sigma^2\right)^{-1}_{H_2(x),d_2}\blacktriangleright\id_\ell}"'] \arrow[r, "{m^R_{d_2,H_2(x),\ell}}"] & H_2(x)\blacktriangleright d_2\blacktriangleright\ell \arrow[r, "{\eta^{-1}_{x,d_2\blacktriangleright\ell}}"]    & d_2\blacktriangleright\ell\blacktriangleleft H_1(x) \\
            (d_2\otimes^{\op} H_2(x))\blacktriangleright\ell \arrow[r, "{m^R_{H_2(x),d_2,\ell}}"]                                                                                   & d_2\blacktriangleright H_2(x)\blacktriangleright\ell. \arrow[ru, "{\id_{d_2}\blacktriangleright\eta_{x,\ell}}"'] &                                                    
        \end{tikzcd}
    \end{equation}
      Comparing with \eqref{eq:right-half-braiding}, we see that this is exactly the compatibility of the $\A$-centered structure of $\L$ with the right half-braiding, so the claim holds. A completely analogous argument holds for the right half-braiding. For the monoidal condition \eqref{eq:monoidal-structure-centered-bimodule}, recall that the monoidal structure on $(H_i,(\sigma^i)^{-1})$ is given by \eqref{eq:monoidal-structure}. We therefore need to check the commutativity of the diagram 
      \begin{equation}
          \begin{tikzcd}[row sep = large, column sep = 7 em]
                H_2(x)\blacktriangleright H_2(y)\blacktriangleright\ell \arrow[r, "{m^R_{H_2(y),H_2(x),\ell}}"] \arrow[d, "{\eta^{-1}_{x,H_2(y)\blacktriangleright\ell}}"']  & H_2(y\otimes x)\blacktriangleright\ell \arrow[r, "{H_2(\beta^\A_{y,x})^{-1}\blacktriangleright\id_\ell}"]  & H_2(x\otimes y)\blacktriangleright\ell \arrow[dd, "{\eta^{-1}_{x\otimes y,\ell}}"] \\
                H_2(y)\blacktriangleright\ell\blacktriangleleft H_1(x) \arrow[d, "{\eta^{-1}_{y,\ell\blacktriangleleft H_2(x)}}"']                                                                                                                     &  &                                                                                    \\
                \ell\blacktriangleleft H_1(x)\blacktriangleleft H_1(y)     \arrow[r, "{m^L_{H_1(x),H_1(y),\ell}}"]                                                                 & \ell\blacktriangleleft H_1(y\otimes x) \arrow[r, "{\id_\ell\blacktriangleleft H_1(\beta^\A_{y,x})^{-1}}"]  & \ell\blacktriangleleft H_1(x\otimes y).                                            
            \end{tikzcd}
      \end{equation}
      By the compatibility of the $\A$-centered structure of $\L$ with the monoidal structure, we can replace the composition of the two vertical arrows on the left by $\eta^{-1}_{x\otimes y,\ell}.$
      The resulting square is then commutative by the naturality of $\eta$.
\end{proof}

\begin{remark}
    In particular, since $\left(\A\boxtimes\C^{\bop}\right)^{\bop} \cong \C\boxtimes \A^{\bop}$, this implies that an $(\A,\C)$-centered $(\D_1,\D_2)$-bimodule $\L$ is canonically a $(\C,\A)$-centered $(\D_2^\vee,\D_1^\vee)$-bimodule.
\end{remark}

\begin{figure}
  \centering
  \includegraphics[scale=1.2]{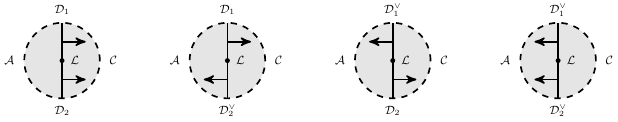}
  \caption{Suppose that the defect line is oriented out of the page. $\L$ is a $(\C,\A)$-centered $(\D_2,\D_1)$-bimodule, where $\D_1,\D_2$ are $(\C,\A)$-central algebras.}
  \label{fig:normal-vectors}
\end{figure}

\begin{definition}\label{def:data-for-many-surfaces}
Suppose there are $n$ codimension-1 defects $\D_1,\ldots \D_n$ meeting at a codimension-2 defect, as in Figure \ref{fig:generic-junction}, with each $\D_i$ an $(\A_i,A_{i+1})$-central algebra. For simplicity's sake we're assuming that the outward normals of the surfaces follow the right hand rule along the orientation of the codimension-2 defect, i.e. they point counter-clockwise in Figure \ref{fig:generic-junction}.
The \emph{\bf data specifying a codimension-2 defect $\L$} at the junction is a pointed ${\tr(\D_1\boxtimes_{\A_2}\cdots\boxtimes_{\A_{n}}\D_n)}$-module, where $\tr$ is the trace as defined in Definition \ref{def:rel-DK-and-trace}.
\end{definition}

\begin{figure}
  \centering
  \includegraphics{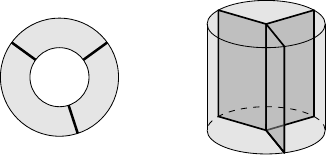}
  \caption{\textbf{Left:} The stratified annulus $\Gamma \times [0,1]$. \textbf{Right:} The thickened cone $C(\Gamma)\times[0,1]$.}\label{fig:annuli-and-thickened-cone}
\end{figure}

We can unpack the data of Definition \ref{def:data-for-many-surfaces} as follows:
\begin{lemma}\label{lemma:unpacking-line-data}
    A $\tr(\D_1\boxtimes_{\A_2}\cdots\boxtimes_{\A_{n}}\D_n)$-module is equivalently a $\D_1\boxtimes\cdots\boxtimes \D_n$-module which admits the structure of an $\A_i$-centered $(\D_{i},\D_{i-1}^\vee)$-bimodule, for $i = 1,\ldots ,n$ taken modulo $n$.  
\end{lemma}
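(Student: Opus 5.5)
We prove the lemma by peeling off the two universal properties in turn, first for the balanced Deligne--Kelly tensor products and then for the trace. Write $\M:=\D_1\boxtimes\cdots\boxtimes\D_n$, a monoidal category with factorwise tensor product.

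\textbf{Step 1: balancing data as half-braidings.} Each relative tensor product $\boxtimes_{\A_{i}}$ in the chain, and the final trace over $\A_1$ which closes the cycle, is the universal recipient of an $\A_i$-balanced functor. So $\tr(\D_1\boxtimes_{\A_2}\cdots\boxtimes_{\A_n}\D_n)$ is a quotient of $\M$ by $n$ balancings, one for each adjacent pair $(\D_{i-1},\D_i)$ with indices mod $n$. Since $\D_{i-1}$ and $\D_i$ are $\A_i$-central algebras, the balancing at $\A_i$ identifies the action of $H_{i-1}(x)$ with that of $H_i(x)$, for $x\in\A_i$. These identifications are monoidal and compatible with the half-braidings $\sigma^{i-1},\sigma^i$, so the trace inherits a monoidal structure making $\M\to\tr(\cdots)$ monoidal.

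\textbf{Step 2: modules over the trace.} A $\tr$-module $\L$ is, by the universal property of Definition \ref{def:rel-DK-and-trace} applied to the action functor $\tr(\cdots)\boxtimes\L\to\L$, the same as an $\M$-module $\L$ whose action functor is balanced in each $\A_i$. Unpacked, balancedness at $\A_i$ supplies natural isomorphisms
\[
  \eta^i_{x,\ell}\colon H_i(x)\triangleright\ell\xrightarrow{\cong}H_{i-1}(x)\triangleright\ell .
\]
We view the action of $\D_i$ as a left action, and the action of $\D_{i-1}$, whose underlying category is that of $\D_{i-1}^\vee$, namely $\D_{i-1}^{\mop}$, as a right action. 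Then $\eta^i$ has exactly the shape of the balancing \eqref{eq:balancing}, for $\D_i$ acting on the left and $\D_{i-1}^\vee$ acting on the right. The relevant central functor on $\D_{i-1}^\vee$ is the one from Definition \ref{def:dual-defect}, with half-braiding $(\sigma^{i-1})^{-1}$ and monoidal structure \eqref{eq:monoidal-structure}.

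\textbf{Step 3: matching the coherence axioms.} We check that the coherence axioms for a balanced functor correspond to the three diagrams \eqref{eq:left-half-braiding}, \eqref{eq:right-half-braiding} and \eqref{eq:monoidal-structure-centered-bimodule}.
\begin{itemize}
  \item The monoidality of the balancing in $x$ gives \eqref{eq:monoidal-structure-centered-bimodule}. Translating $\otimes^{\op}$ on the right factor requires the twist by $H(\beta)^{-1}$, exactly as in the proof of Lemma \ref{lemma:dual-line}.
  \item Compatibility of the balancing with the module associators gives the left and right half-braiding diagrams. This mirrors the computation in Lemma \ref{lemma:dual-line}, now with inverse half-braidings on the dual side.
\end{itemize}
Conversely, given such centered bimodule structures, the same dictionary read backwards produces balanced action functors, which descend through the trace by its universal property. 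The two constructions are mutually inverse because both are determined by the same data $(\L,\triangleright,\eta^i)$. The pointing plays no role and passes through unchanged.

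\textbf{Main obstacle.} We expect Step 3 to be the hard part, in particular the bookkeeping of orientations and conventions. One must check that the right action of $\D_{i-1}^{\vee}$ uses the correct inverse half-braiding, so that the hexagon-type balancing axiom becomes precisely \eqref{eq:right-half-braiding} and not its inverse. One must also check that the iterated relative tensor products together with the cyclic trace impose exactly one balancing per $\A_i$ with no extra relations. For the latter, the plan is to use associativity of $\boxtimes_{\A}$ together with the fact that taking the trace of a chain is equivalent to imposing all $n$ balancings simultaneously, both of which follow from the universal properties of Definition \ref{def:rel-DK-and-trace}.
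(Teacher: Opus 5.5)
Your proposal is correct and follows essentially the same route as the paper. Both arguments identify the balancing colimit diagram that defines each $\boxtimes_{\A_i}$ and the closing trace over $\A_1$ with the $\A_i$-centered structure on $(\D_i,\D_{i-1}^\vee)$-bimodules. You are more explicit than the paper's one-line "by construction" argument. You apply the universal property to the action functor and match the balanced-functor coherences with the three centered-bimodule diagrams, which also spells out the converse direction. The paper only notes that the trace itself inherits these structures.
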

\begin{proof}
    This follows directly from the definitions.
The relative Deligne-Kelly tensor product of a left $\A$-module $\mathcal{M}$ and right $\A$-module $\mathcal{N}$ is the colimit of the following diagram 
\begin{equation}
    \begin{tikzcd}
      \A\times \mathcal{M} \boxtimes \mathcal{N} \ar[r,yshift=3pt,"\triangleright"] \ar[r,yshift=-3pt,"\triangleleft"']& \mathcal{M} \boxtimes \mathcal{N} 
    \end{tikzcd}
\end{equation}
When $\A=\A_i$, $\mathcal{M} = \D_{i-1}^\vee$, and $\mathcal{N}=\D_i$, this is exactly the diagram whose commutativity characterises an $\A_i$-centered $(\D_i,\D_{i-1}^\vee)$-module. So by construction $\D_1\boxtimes_{\A_2} \cdots \boxtimes_{\A_{n}} \D_n$ has the structure of an $\A_i$-centered $(\D_i,\D_{i-1}^\vee)$-bimodule for $i=1,\ldots,n$, taken modulo $n$.

  The trace is likewise a colimit over the diagram which encodes the $\A_1$-balanced $(\D_1,\D_n^\vee)$-bimodule action, and hence itself inherits this same structure.
  \end{proof}

  This data can be described compactly in terms of defect skein categories, see Definition \ref{def:defect-skein-cat}.

\begin{remark}\label{rem:skeiny-version-of-defect-data}
    Every point on a codimension-2 defect has a neighbourhood which is stratified isotopic to a thickened cone $C(\Gamma)\times[0,1]$ on some stratified circle $\Gamma$, see Figure \ref{fig:annuli-and-thickened-cone}.
By excision, the skein category (Definition \ref{def:defect-skein-cat}) of this annular neighbourhood is exactly the category appearing in Definition \ref{def:data-for-many-surfaces}:
\begin{equation}
    \mathcal{S}_\Gamma := \SkCat(\Gamma\times[0,1]) \cong \tr(\D_1\boxtimes_{\A_2}\cdots\boxtimes_{\A_{n}}\D_n)
\end{equation}
Its product is induced by an embedding $\Gamma \times [0,1] \sqcup \Gamma \times [0,1] \hookrightarrow \Gamma\times [0,1]$ and monoidal unit determined by $\varnothing \hookrightarrow \Gamma\times [0,1]$.

In particular, the skein category $\SkCat(\Sigma)$ of any decorated surface $\Sigma$ with an embedding $\Gamma \hookrightarrow \partial \Sigma$ determines a codimension-2 defect on $\C(\Gamma)\times[0,1]$.
\end{remark}

All together, the following assignment of algebraic data will be used to determine skein theory in a 3-manifold with surface and line defects.

\begin{definition}\label{def:decorated-manifold}
  A \emph{\bf decorated 3-manifold} $M$ is defined as follows. Let $\phi : M \to \{1 \geq 2 \geq 3\}$ be an oriented stratified 3-manifold such that each $M_k := \phi^{-1}(k)$ is $k$-dimensional, and $M_2$ is a collection of smoothly embedded oriented surfaces meeting $\partial M$ transversely, if at all. Similarly, $M_1$ is permitted to meet the boundary transversely.
  We \emph{\bf decorate} $M$ by assigning the following algebraic data to its \emph{\bf regions}, i.e. the connected components of its strata:
  \begin{list}{\textbullet}{}
    \item For each $r \in \pi_0(M_3)$, let $\A_r$ be the associated ribbon category.
    \item For each $s \in \pi_0(M_2)$, let $\A_{s_0}$ (resp. $\A_{s_1}$) be the ribbon category in the direction of its inward (resp. outward) normal, and let $\D_s$ be the associated central $(\A_{s_1},\A_{s_0})$-algebra.

    \item For each $t \in \pi_0(M_1)$, which is in the closure of 2D strata labelled $\D_{t_1},\ldots,\D_{t_n}$ and 3D strata labelled $\A_{t_1},\ldots, \A_{t_n}$, let $\L_t$ be the associated pointed $\K$-linear category. Here the $\D$ and $\A$ are indexed in clockwise order relative to the orientation on $t$, as in Figure \ref{fig:generic-junction}. Suppose for simplicity that the outward normals are facing in the counter-clockwise direction, if not replace the relevant $\D$ by $\D^\vee$.  
      The category $\L_t$ is endowed with the structure of a $\tr(\D_{t_1}\boxtimes_{\A_{t_2}} \cdots \,\boxtimes_{\A_{t_{n}}} \D_{t_n})$-module.
  \end{list}
  We allow the possibility of non-separating defects, i.e. where $s_0 = s_1$ or $t_{i} = t_{i-1}$. 

  We define a \emph{\bf decorated surface} $\Sigma\to \{0\geq 1 \geq 2\}$ analogously, associating an $\A_r$ to each $r \in \pi_0(\Sigma_2)$, a $\D_s$ to each $s \in \pi_0(\Sigma_1)$, and an $\L_t$ to each $t \in \pi_0(\Sigma_0)$. We allow $\Sigma_1$ to intersect $\partial \Sigma$ transversely.
  The boundary of a decorated manifold is a decorated surface.

  We define a \emph{\bf decorated embedding} as an embedding of stratified manifolds that additionally respects the assigned algebraic data.
\end{definition}
A warning - our definition of decorated spaces is precisely what's needed for the setup in this paper. It is less general than it could be from the viewpoint of stratified factorisation homology \cite{AFT2017,AFT2017local}.
On the other hand, in allowing codimension-2 defects, this definition is more general that what appears under the same name in \cite{Brown_Jordan_2025,JLSS2021}. 

Our goal now is to extend skein relations to decorated ribbon graphs in 3-manifolds with defects in codimensions-1 and 2.
By ``extend'', we mean that these relations will be compatible with those given in \cite{Brown_Jordan_2025} as well as the bulk relations defined for the non-defect theory in \cite{RT1990}. These relations will be between certain decorated ribbon graphs in decorated 3-manifolds. The following is a generalisation of \cite[Def. 2.10]{Brown_Jordan_2025}.
\begin{definition}\label{def:decorated-ribbon-graph}
  Fix a decorated 3-manifold $M$. A \emph{\bf decorated ribbon graph $\Gamma$ embedded in $M$} is a graph $\Gamma$ together with the following data:
  \begin{description}
    \item[A stratified embedding] $\iota : (\Gamma,\{0 \geq 1\}) \to (M,\{1 \geq 2 \geq 3\})$, where $\Gamma_1$ and $\Gamma_0$ are the edges and vertices of $\Gamma$, respectively.

    Vertices in the interior $M\setminus \partial M$ will be called \emph{\bf coupons}, and those in the boundary will be call \emph{\bf endpoints}. We require that endpoints are univalent, with their unique edge meeting the boundary transversely.
  
    \item[A stratified framing] on $\iota(\Gamma)$, i.e. a non-vanishing section of the restricted normal bundle $N(\iota{e}) \cap TM_k$ for each edge $e$ landing in $M_k$ for $k \geq 1$, together with an orientation when $k \geq 2$, and a cyclic ordering on the half edges at each vertex.

    \item[An object for each edge] For each edge $e \in \pi_0(\Gamma_1)$, an assigned object $V_e$ of the category $\mathcal{E}_e$ associated to the region containing $\iota(e)$.  
    \item[A morphism at bulk coupons] For $\iota(v)$ contained in $M_3$, let $h(v)$ denote the cyclically ordered half-edges incident to $v$, and for each $e \in h(v)$ let $V_e^\epsilon \in \mathrm{Obj}(\mathcal{E}_e)$ be $V_e$ if $e$ is incoming and $V_e^*$ if $e$ is outgoing. Then $v$ is decorated by a morphism
      \begin{equation}
        f : \bigotimes_{e \in h(v)} V_e^\epsilon \to \idty_{\mathcal{E}_e}
      \end{equation}
      in $\E_e$. Here we've used that $\mathcal{E}_{e} = \mathcal{E}_{e'}$ for all $e,e' \in h(v)$, i.e. that the half edges are all in a single region.

    \item[A morphism at codimension-1 defect coupons]
      For $\iota(v)$ in some region  $s \in \pi_0(M_2)$, let $h(v)$ be the incident half edges contained in $s$, and $h_{0}(v),h_{1}(v)$ those contained in $s_0,s_1 \in \pi_0(M_3)$. (Recalling from Definition \ref{def:decorated-manifold} that the outward normal of $s$ points towards the 3-dimensional region $s_1$ and away from the region $s_0$.) 
      Let $F : \A_{s_1}\boxtimes \A_{s_0} \to \D_s$ be the functor determined by the centered bimodule structure on $\D_s$.
      Then $v$ is labelled by a morphism in $\D_s$
  \begin{equation}
    f : F\left(\bigotimes_{e \in h_{1}(v)} V_e^\epsilon \boxtimes \bigotimes_{e \in h_{0}(v)} V_e^\epsilon \right) \otimes \bigotimes_{e \in h(v)} V_e^\epsilon \to \idty_{\D_s},
  \end{equation}
  where as before $V_e^\epsilon$ is $V_e$ or $V_e^*$, depending on whether $e$ is incoming or outgoing.

\item[A morphism at codimension-2 defect coupons]
      Suppose $\iota(v)$ is in a codimension-2 defect labelled by $\L$, adjacent to codimension-1 defects labelled by $\D_1,
      \ldots \D_n$. Let $F_i : \A_i \to \D_i$, $i = 1,\ldots,n$ denote the functors induced by the centered bimodule structures.

      Let $X_{0}$, $X_{1} \in \L$ be the objects colouring the incident edges on either side of $\iota(v)$ inside the codimension-2 defect, with $X_0$ before the vertex and $X_1$ after it.
      Recall here that the codimension-2 defect itself is oriented, but that the edges of $\Gamma$ inside the codimension-2 defect are not.
      If $\Gamma$ has no edge in one of these directions, then let the corresponding $X_i$ be the distinguished object $\Dist_\L$. 

      Let $V_1,\ldots,V_n$ be the labels for the incident edges in the adjacent codimension-1 defects, with $V_i = \idty_{\D_{i}}$ if there is no such edge.
      Let $V_i^\epsilon = V_i$ if the edge is incoming and $V_i^*$ if it's outgoing.
    If there's more than one edge, replace the corresponding $V_i$ by the tensor product of the incident edges, and take $\epsilon$ as a multi-index.

    Similarly, let $W_1^\epsilon,\ldots,W_n^\epsilon$ be the labels of the half edges in the bulk regions decorated by ribbon categories $\A_1,\ldots, \A_n$, with as before $W_i = \idty_{\A_i}$ if there is no such edge and a product if there are multiple edges. 

      Then $v$ is labelled by some morphism in $\L$:
      \begin{equation}\label{eq:line-coupon}
        f : (F_1(W_1^\epsilon)\otimes V_1^\epsilon\boxtimes\cdots\boxtimes F_n(W_n^\epsilon)\otimes V_n^\epsilon) \triangleright X_0\longrightarrow X_1
      \end{equation}
  \end{description}
\end{definition}

Restricting a decorated ribbon graph to the boundary $\partial M$ determines a finite configuration of framed decorated points called a \emph{labelling}.
These are a fundamental object in skein theory.

\begin{definition}\label{def:labelling}
  A \emph{\bf labelling} $X$ of a decorated surface $\Sigma$ is a finite collection of points in $\Sigma$ with the following data:\\[-2em]
  \begin{itemize}
      \item framing within the ambient strata for each point, e.g. points in $\Sigma_1$ have two choices of framing while those in $\Sigma_0$ have no framing data;
      \item a sign for points in $\Sigma_2$ and $\Sigma_1$;
      \item a decorating object for each point, which comes from the category associated with its region in $\Sigma$.
  \end{itemize}
\end{definition}

\subsection{Skein relations near a codimension-2 defect}\label{sec:evaluation}

We start by considering a planar theory with codimension-2 defects, before defining the evaluation functor $\Rib^n_\L \to \L$ for the full 3-dimensional defect theory.

\subsubsection{The planar defect theory}\label{sec:planar-defect-theory}

Pivotal categories have a graphical calculus based on planar framed graphs. The resulting planar skein theory admits codimension-1 defects determined by bimodules.
In this section we will give an overview of this theory, extending the non-defect framework developed in \cite{GPV2012}.

\begin{definition}\label{def:planar-graph-cat}
  Let $\D$ be a pivotal category. $\Planar_\D$ is the monoidal \emph{\bf category of planar graphs coloured by $\D$}. It has:
  \begin{description}
    \item[objects:]
      Finite collections of points in $\RR$, each labelled by some sign $\epsilon = \pm 1$ and object $V \in \D$.
      We will denote a object by its set of labels $\{(V_i,\epsilon_i)\}_{i}.$
    \item[morphisms:]
      a morphism from $\{(V_i,\epsilon_i)\}$ to $\{(W_j,\epsilon_j)\}$ is an oriented planar graph in $\mathbb{R}\times[0,1]$,
      with edges coloured by objects in $\D$ and coupons (i.e. vertices) labelled by morphisms from the tensor product of incoming edges to that of outgoing edges.
      The orientation and colour of edges at the boundary $\RR \times \{0,1\}$ must agree with the colours and signs of the source and target objects.
      These graphs are considered up to isotopy.

      We will depict morphisms as going upward, from $\RR\times \{0\}$ to $\RR \times \{1\}$. Composition is given by vertical stacking followed by a rescaling $\mathbb{R} \times [0,2] \overset{\cong}{\to} \mathbb{R} \times [0,1]$.
    \item[product:]
      the monoidal structure is given by disjoint union, so that $T_1 \otimes T_2$ is $T_1$ placed to the left of $T_2$. On objects, it is given by concatenating sequences. The empty sequence is the monoidal unit.

    \item[duality:]
      the dual of $\eta = ((V_1, \varepsilon_1),\ldots,(V_n,\varepsilon_n))$, is $\eta^* := ((V_n, -\varepsilon_n),\ldots,(V_1,-\varepsilon_1))$. 
      Coevaluation and evaluation are given by oriented cups and caps. The pivotal structure is given by the identity isomorphism $(V,+)^{**}=(V,-)^*=(V,+).$
  \end{description}
\end{definition}

There is a canonical pivotal functor $\Planar_\D \to \D,$ defining planar skein relations for pivotal categories \cite{GPV2012}.
We can extend this to allow codimension-2 defects as follows:

\begin{figure}
  \centering
  \includegraphics{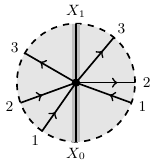}\hspace{1cm}
  \includegraphics{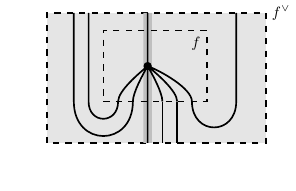}
  \caption{{\bf Left:} The ``lowest first'' ordering on the incident edges of a defect coupon, induced  by orientation of the codimension-2 defect (here pointed upwards) and used in Definition \ref{def:planar-defect-cat}.
  {\bf Right:} The relationship between $f$ and $f^\vee$, appearing in part {\bf (M)} of Proposition \ref{prop:planar-defect-evaluation}, shown here without orientations or labels.}\label{fig:planar-defect-vertex-ordering}
\end{figure}

\begin{definition}[$\Planar_\L$]\label{def:planar-defect-cat}
Let $\L$ be a $\K$-linear $(\D_1,\D_2)$-bimodule with distinguished object $\Dist$.
$\Planar_\L$ is the $\K$-linear category defined as follows:
\begin{description}
  \item[objects:] Triples $( X_1, \ell,X_2)$, or $(X_1,\varnothing,X_2)$, where $X_i$ is an object of $\Planar_{\D_i}$ and $\ell$ is an object of $\L$. We identify $(X_1,\varnothing,X_2)$ and $(X_1,\Dist,X_2)$.
  \item[morphisms:]
    A morphism from $(X_1,\ell,X_2)$ to $(Y_1,m,Y_2)$ is a planar graph $\Gamma$ with a stratified embedding $\iota : \Gamma \hookrightarrow \RR\times[0,1]$, where the stratification on $\RR$ is given by $\{0\} \subset \RR$. This is given a stratified framing, i.e the choice of an orientation for each edge in the two-dimensional region, and no additional data in the one-dimensional region.

    The colours and orientations of the graph must agree with those of $(X_1,\ell,X_2)$ on $\mathbb{R}\times \{0\}$ and those of $(Y_1,m,Y_2)$ on $\mathbb{R}\times \{1\}$. 

    Vertices in either planar region are labelled according to Definition \ref{def:planar-graph-cat}.
    Suppose $v$ is a vertex in the defect, with incident half edges $h_1(v)$ in the $\D_1$ region and $h_2(v)$ in the $\D_2$ region. Then $\iota(v)$ is labelled by some morphism in $\L$:
    \begin{equation}
      f: \left(\bigotimes_{e \in h_0(v)} \hspace{-1ex} V^\epsilon_e \right)\triangleright X_0 \triangleleft \left(\bigotimes_{e \in h_1(v)} \hspace{-1ex}V^\epsilon_e\right) \to X_1.
    \end{equation}
    The linear orders on $h_0(v),h_1(v)$ are induced by the orientation on the codimension-2 defect, i.e. the lowest edges become the innermost. See Figure \ref{fig:planar-defect-vertex-ordering}.
    As elsewhere, $V^\epsilon_e =V_e$ if $e$ is directed into $v$, and $V^*_e$ if its directed out of $v$. 

    Graphs are considered up to stratified isotopy, and composition is given by vertical stacking and rescaling.
\end{description}
\end{definition}

We construct an evaluation functor $\Planar_\L \to \L$ as follows.
\begin{proposition}\label{prop:planar-defect-evaluation}
The following procedure defines a linear functor $\ev : \Planar_\L \to \L$. On objects, it's given by $(X_1,\ell,X_2) \mapsto \ev_1(X_1) \triangleright \ell \triangleleft \ev_2(X_2)$, where $\ev_1,\ev_2$ are the evaluation functors for the planar diagrammatics of the pivotal categories $\D_1,\D_2$.  
Let $\Gamma$ be a morphism in $\Planar_\L$. We construct $\ev(\Gamma)$ as follows. 
\begin{enumerate}
  \item Fix a height function on the stratified space containing $\Gamma$, which is strictly increasing along the defect. Place $\Gamma$ in generic position with respect to this height function, i.e. all vertices and critical points have distinct heights, and the height function is Morse on all edges of $\Gamma$.
  \item Factor $\Gamma = \Gamma_n\Gamma_{n-1} \cdots \Gamma_1$ along the height function so that each $\Gamma_i$ is in either \emph{\bf(M)} a small strip containing a single `mixed vertex' with edges both the planar and the linear regions and no critical points or \emph{\bf(I)} a strip with no such vertices, i.e. no interaction between the planar and linear regions.
  \item Evaluate each $\Gamma_i$ according to its type:
    \begin{description}
      \item[(M)] here $\Gamma_i$ is a combination of a single mixed vertex labelled by some morphism 
        \begin{equation}
          f: V_n^\epsilon\otimes\cdots\otimes V_1^\epsilon \triangleright X_0 \triangleleft W_1^\epsilon\otimes \cdots \otimes W_m^\epsilon \to X_1 
        \end{equation}
        in $\L$ surrounded by straight lines labelled $x_1,\ldots,x_k$ and $y_1\ldots,y_m$ in the $\D_1$ and $\D_2$ regions, respectively. 
        First, we use the duality in $\D_1,\D_2$ to modify $f$ such that edges coming from above act on the target instead of the source, see Figure \ref{fig:planar-defect-vertex-ordering}.
        Concretely, suppose that the edges labelled $V_n,..., V_{k}, W_m,...,W_\ell$ connect the vertex to the upper boundary of the strip. Then we replace $f$ by
        \begin{equation}
          f^\vee: V_{k-1}^\epsilon\otimes\cdots\otimes V_1^\epsilon \triangleright X_0 \triangleleft W_1^\epsilon\otimes \cdots \otimes W_{\ell-1}^\epsilon \to \left(V_{m}^\epsilon\otimes\cdots\otimes V_k^\epsilon\right)^* \triangleright X_1 \triangleleft \left(W_\ell^\epsilon\otimes \cdots \otimes W_{m}^\epsilon\right)^* 
        \end{equation}

        Finally, evaluate such graphs as $\id_{x_1^\epsilon\otimes\cdots\otimes x_k^\epsilon} \triangleright f^\vee \triangleleft \id_{y_1^\epsilon\otimes \cdots \otimes y_m^\epsilon}$.
      \item[(I)] here $\Gamma_i$ decomposes into three isolated graphs $\Gamma_{i,1}$ in the $\D_1$ region, $\Gamma_{i,l}$  in the $\L$ region, and $\Gamma_{i,2}$ in the $\D_2$ region. Send such a graph to $\ev_1(\Gamma_{i,1}) \triangleright \ev_{lin}(\Gamma_{i,l}) \triangleleft \ev_2(\Gamma_{i,2})$. Here $\ev_{lin}$ the is evaluation functor for the 1-dimensional diagrammatics of the linear category $\L$.
  \end{description}
\item Finally, compose the individual pieces $\ev(\Gamma) := \ev(\Gamma_n)\cdots \ev(\Gamma_1)$ and extend linearly to the full hom-spaces of $\Planar_\L$. 
\end{enumerate}
\end{proposition}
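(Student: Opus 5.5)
The proof is the standard one for Reshetikhin--Turaev type evaluations: show that the output is independent of every choice, then check functoriality and linearity. The choices are the height function, the generic position of $\Gamma$, and the factorisation into strips of type \textbf{(M)} and \textbf{(I)}. By the planar Cerf-theoretic description of stratified isotopy, any two generic presentations of the same isotopy class of $\Gamma$ are connected by finitely many elementary moves:
\begin{itemize}
\item[(a)] refining or coarsening the factorisation, i.e.\ inserting or removing heights at which nothing happens;
\item[(b)] exchanging the heights of two far-apart elementary pieces (critical points, vertices) lying in different regions, or in the same region but separated horizontally;
\item[(c)] planar moves entirely inside one $\D_i$-region: zig-zag straightening, moving vertices past critical points, and rotating vertices;
\item[(d)] moves at the defect line: sliding a critical point of an edge that ends on a mixed vertex through that vertex, i.e.\ changing which incident edges approach the vertex from above or below;
\item[(e)] changing the height function itself, which is restricted to be increasing along the defect, so the space of such functions is connected.
\end{itemize}
Since the stratified isotopy keeps the defect fixed and cannot create intersections with it, the list of local moves is genuinely finite. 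This yields the planar analogue of the Reidemeister-type theorem. I would state it and cite the proof of \cite{GPV2012} adapted to the stratified strip.

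Invariance under moves (a), (b) and (c) is routine. Move (a) holds because the evaluation of a strip with no events is an identity. Move (b) follows from the bifunctoriality of the action $\triangleright,\triangleleft$: interchange of $\id\triangleright g$ with $f\triangleleft \id$, together with naturality of the associators $m^L,m^R$. Move (c) holds because $\ev_1,\ev_2$ are already well defined on $\Planar_{\D_i}$ \cite{GPV2012}, and the action functor applies them in the outer factors. Move (e) reduces to (a)--(d), since any two admissible height functions are connected by a path through generic ones with finitely many codimension-one degenerations.

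The main obstacle is move (d), the compatibility of the dualisation $f\mapsto f^\vee$ with the rotation of edges around a mixed vertex. Here I would define $f^\vee$ explicitly as $f$ precomposed with coevaluations $\mathrm{coev}$ acting on $X_1$, using the $\D_i$-actions, in the ordering of Figure~\ref{fig:planar-defect-vertex-ordering}. I would then check two things. First, moving a single edge across the vertex from below to above agrees with composing $f$ with a cup in the $\D_i$-region; this is a zig-zag identity transported through the module associator $m^L$ (resp.\ $m^R$). Second, moving an edge all the way around is impossible because the defect blocks it, so no pivotal or twist compatibility beyond the pivotal structure of $\D_i$ is needed. The ``lowest first'' ordering is exactly what makes the innermost factor correspond to the edge nearest the defect, so that these adjunction isomorphisms compose coherently. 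I would verify the general case by induction on the number of rotated edges, using the pentagon for $m^L,m^R$ to reassociate.

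Finally, functoriality follows because vertical stacking concatenates factorisations, and the evaluation of a concatenation is the composite of the evaluations. Identities are straight strands, which evaluate to identities. Linearity holds by definition. On objects, the functor is given by $\ev_1(X_1)\triangleright\ell\triangleleft\ev_2(X_2)$, with $\varnothing\mapsto\Dist$, consistent with the identification in Definition~\ref{def:planar-defect-cat}.
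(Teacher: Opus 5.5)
Your proposal is correct and takes essentially the paper's route. In both arguments the only non-trivial case is changing whether an edge meets a mixed vertex from below or from above. The paper handles this by passing a planar coupon $T$ over $f$ and cancelling the resulting cup/cap pair using the zig-zag identity and functoriality of the action, with $f^\vee = (\id_{V^*}\triangleright f)\circ(b_V\triangleright\id_{X_0})$; note that the coevaluation acts on $X_0$, not $X_1$ as you wrote. Your Cerf-style list of moves, including far commutativity via bifunctoriality and naturality of $m^L,m^R$, is a more systematic version of the paper's brief sketch.
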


\begin{figure}
  \centering
  \includegraphics{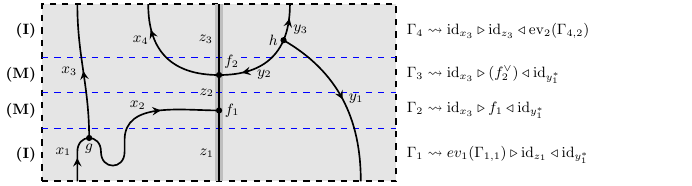}
  \vspace{1em}

  \includegraphics{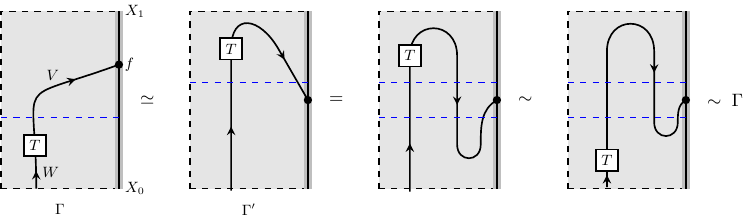}
  \caption{\textbf{Top:} Evaluation for the planar theory with defects is constructed relative to a decomposition of the graphs. \textbf{Bottom:} Isotopy invariance is a consequence of functoriality of the action and rigidity in the planar coefficient categories, because an isotopy passing a planar coupon above a mixed one introduces a cancelling cap/cup pair.}\label{fig:planar-defect-decomp}
\end{figure}

\begin{proof}
   We show that $\ev : \Planar_\L \to \L$ is invariant under stratified isotopy and the choice of decomposition $\Gamma = \Gamma_n \cdots \Gamma_1$. 
First, each slice of type (M) or (I) is isotopy invariant, for type (M) this is by construction and for type (I) this is a direct consequence of coherence of planar diagrammatics for a pivotal category and 1-dimensional diagrammatics of a linear category.

Next, suppose an isotopy $\Gamma \sim \Gamma'$ changes the decomposition into mixed and isolated regions by swapping the order of some planar vertex labelled $T$ and some mixed vertex labelled $f$, as in Figure \ref{fig:planar-defect-decomp}. Without loss of generality, we can consider just one planar region.

The two graphs evaluate as follows: $\ev(\Gamma) = f\circ\left(\ev_1(T)\triangleright \id_{X_0}\right)$, while
\begin{equation}
\begin{aligned}
  \ev(\Gamma') &= [(d_V\circ (\ev(T)\otimes \id_{V^*})) \triangleright \id_{X_1}] \circ (\id_{W}\triangleright f^\vee) \\
  &= [(d_V\circ(\ev(T)\otimes\id_{V^*}))\triangleright\id_{X_1}]\circ[(\id_W\otimes\id_{V^*})\triangleright f]\circ[(\id_{W}\otimes b_V)\triangleright\id_{X_0}] \\
  &= (d_V\triangleright\id_{X_1})\circ[(\ev(T)\otimes\id_{V^*})\triangleright\id_{X_1}] \circ [(\id_W\otimes\id_{V^*})\triangleright f]\circ[(\id_{W}\otimes b_V)\triangleright\id_{X_0}] \\
  &= (d_V\triangleright\id_{X_1})\circ[(\id_W\otimes\id_{V^*})\triangleright f]\circ[(\ev(T)\otimes b_V)\triangleright\id_{X_0}] \\
  &= f\circ [(d_V\otimes\id_{V^*})\triangleright\id_{X_0}]\circ[(\id_V\otimes b_V)\triangleright \id_{X_0}]\circ(\ev(T)\triangleright\id_{X_0}) \\
  &= f\circ\left(\ev_1(T)\triangleright \id_{X_0}\right)\\
  &=\ev(\Gamma),
\end{aligned}
\end{equation}
where $d_V : V\otimes V^* \to \idty $ and $b_V : \idty \to V^* \otimes V$ are the duality morphisms for $V$. Here we've used the definition of $f^\vee$ in the second equality, the zig-zag identity $(d_V\otimes \id_{V} )(\id_V\otimes b_V) = \id_V$ in the penultimate equality and the functoriality of the action in the rest (see Figure \ref{fig:planar-defect-decomp}).
\end{proof}

\begin{remark}
    The category $\Planar_\L$ is the free pointed bimodule category on $\L$. It has a canonical $(\Planar_{\D_1},\Planar_{\D_2})$-bimodule structure given by vertically stacking diagrams to the left and right, and these actions commute with the evaluation functors, up to conjugation by the coherences. The distinguished object is $(\emptyset,\emptyset,\emptyset)$.
\end{remark}

\subsubsection{Extension to three dimensions}\label{sec:3d-skein-relations}
We can now use labellings and decorated ribbon graphs to define the diagrammatic category for a codimension-2 defect in three dimensions.

\begin{definition}[$\Rib^n_\L$]\label{def:ribL}
  Let $\DD_n$ be the decorated surface shown on the left hand side of Figure \ref{fig:generic-junction} with codimension-2 defect $\L$. 
  The $\K$-linear category $\Rib_\L^n$ is defined as follows:\\[-2em]
  \begin{description}
    \item[objects:] Labellings of $\DD_n$.
    \item[morphisms:] The $\K$-vector space $\Hom_{\Rib_\L^n}(X,Y)$ is spanned by decorated ribbon graphs embedded in $\DD_n\times[0,1]$, taken up to stratified isotopy relative to the boundary, which restrict to $X$ on $\DD_n\times\{0\}$ and $Y$ on $\DD_n\times\{1\}$.
   
  \end{description}
\end{definition}

We now have all the ingredients necessary to define the evaluation functor $\Rib_\L^n \to \L$ whose kernel will determine skein relations near codimension-2 defects.
Up to replacing some of the $\D_i$ by their duals, we may suppose that the inward normal to every defect points clockwise.

\begin{theorem}\label{thm:skein-evaluation}
The following defines a linear functor $\RT^n_\L : \Rib_\L^n \to \L$. For a decorated ribbon graph, we define $\RT^n_\L(\Gamma)$ in three steps:
\begin{description}
    \item[Step 1: Retract.]
    Fix a deformation retract of $\DD_n\times[0,1]$ onto the union of its defects, $p:\DD_n\times [0,1] \to C(\{1,\ldots,n\})\times[0,1]$. (Recall that $C(X)$ denotes the cone on $X$.)
    Let $\Gamma\hookrightarrow \DD_n\times[0,1]$ be a decorated ribbon graph, not taken up to isotopy, which is in generic position with respect to $p$.
    That is: in the retract of $\Gamma$ along $p$ there are no triple points, double points correspond to transverse intersections and never lie on the defect line, pairs of vertices are neither identified nor sent to the defect line. 
    
    Retract the ribbon graph along $p$ onto the union of defects.
    Consider the image $p(\Gamma)$ as a graph in $C(\{1,\ldots,n\})\times [0,1]$ by treating each introduced double point and each intersection with the codimension-2 defect as a new vertex. Let $p(\Gamma)_1$ and $p(\Gamma)_0$ denote the new edges and vertices, respectively.

    \item[Step 2: Update decorations.]
    Decorate any edge of $p(\Gamma)_1$ which has been pushed from the bulk into the defect by $H_i(V_e)$, where $V_e \in \A_i$ is the corresponding decoration of $\Gamma$. Do the same for vertices coming from the bulk.
    By construction vertices which were originally in the one of the defects do not have to be updated, even if they involved bulk edges.
    
    Any vertex coming from a double-point involves at least one strand coming from some bulk region $\A_i$ and can therefore be decorated by the associated half-braiding in $Z_{Dr}(\D_i)$, as in \cite{Brown_Jordan_2025}.
    
    Finally, a vertex introduced by the intersection of an edge with the codimension-2 defect is labelled by the central structure $\eta_{V_e,\ell} : H_i(V_e)\triangleright \ell \to \ell \triangleleft H_{i+1}(V_e)$ or its inverse, depending on whether the crossing is positive or negative.
    
    \item[Step 3: Separate vertices then evaluate.] Apply an isotopy so that each vertex of $p(\Gamma)$ lies at a different height, even those on distinct surfaces. Planar evaluation with defects extends from 2 to $n$ adjacent codimension-1 defects because the actions of each of the defect walls factors commute. 
    
    Finish by evaluating $p(\Gamma)$ following the same rules as in Proposition \ref{prop:planar-defect-evaluation}, resulting in a morphism of $\L$.
\end{description}

Note that $\RT_\L(\Gamma)$ is a morphism in $\L$, and therefore this construction extends canonically to the objects of $\Rib^n_\L$.
\end{theorem}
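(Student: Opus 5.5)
The plan is to show that $\RT^n_\L(\Gamma)$ does not depend on the auxiliary choices: the retraction $p$, the generic representative of $\Gamma$, and the height ordering in Step 3. Invariance under stratified isotopy then follows, and functoriality and linearity are nearly formal. Stacking two graphs in $\DD_n\times[0,1]$ retracts to the stacked retracted graphs, since $p$ can be chosen level-preserving in the $[0,1]$ direction. Composition in $\L$ is therefore respected, and the identity graph evaluates to the identity. Linearity holds by construction. The work splits into a planar part and a genuinely three-dimensional part.

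For the planar part, fix $p$ and a generic $\Gamma$. The retracted graph $p(\Gamma)$ lives on the ``open book'' $C(\{1,\ldots,n\})\times[0,1]$: $n$ half-planes glued along the line. I would first extend Proposition~\ref{prop:planar-defect-evaluation} from two pages to $n$. Each half-plane $i$ carries planar diagrammatics for the pivotal category $\D_i$, and the line carries $\L$. By Lemma~\ref{lemma:unpacking-line-data}, $\L$ is a $\D_1\boxtimes\cdots\boxtimes\D_n$-module, so actions from different pages commute up to coherent isomorphism. The argument of Proposition~\ref{prop:planar-defect-evaluation} then goes through verbatim. An isotopy swapping a planar vertex past a mixed vertex is absorbed by a zig-zag. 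An isotopy swapping vertices on different pages is absorbed by the interchange law for $\D_1\boxtimes\cdots\boxtimes\D_n$ acting on $\L$. This gives independence of the height ordering in Step 3 and of isotopies of $p(\Gamma)$ within the open book.

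The three-dimensional part is the main obstacle. One must show that changing $p$, or isotoping $\Gamma$ in $\DD_n\times[0,1]$, changes $p(\Gamma)$ only by moves under which the decorated evaluation is invariant. I would argue via a Reidemeister-type theorem for generic projections onto the open book. A generic one-parameter family of graphs passes through finitely many codimension-one singularities of the projection. Away from the spine these are the usual moves inside a single bulk region near a single wall. Those are handled by \cite[Def-Prop.~2.16]{Brown_Jordan_2025}, using that the $\sigma^i$ are half-braidings and that $(H_i,\sigma^i)$ is a braided ribbon functor into $\Zdr(\D_i)$. The new singularities are those at the codimension-2 stratum, and each matches an axiom of a centered bimodule:
\begin{itemize}
\item A bulk strand in region $\A_i$ sweeping across the line. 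This is the passage from a crossing with wall $i$ to a crossing with wall $i+1$ through a line crossing. Invariance uses naturality of $\eta$ together with the left and right half-braiding axioms \eqref{eq:left-half-braiding} and \eqref{eq:right-half-braiding}.
\item Two bulk strands crossing the line in swapped order, or a coupon tensoring two strands, near the line. These use the monoidal axiom \eqref{eq:monoidal-structure-centered-bimodule}.
\item Creation or cancellation of a pair of line crossings. This uses that $\eta$ is invertible and is assigned as $\eta^{\pm1}$ according to the sign.
\item Changing the deformation retract $p$ by rotating a bulk region around the spine. By Lemma~\ref{lemma:dual-line} and Remark~\ref{rmk:orientation_and_evaluation_morphism}, which pass between $\D_i$ and $\D_i^\vee$ with $\eta\leftrightarrow\eta^{-1}$, the choice of which wall absorbs a bulk strand does not matter.
\end{itemize}
Framing twists of strands hitting the line reduce to the pivotal compatibility of the central functors. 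Once these local moves are checked, any two generic choices are connected by finitely many such moves. So $\RT^n_\L$ is well defined on stratified isotopy classes, and it is a linear functor.
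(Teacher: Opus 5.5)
Your proposal follows essentially the same route as the paper. Invariance away from the spine comes from the $n$-page version of Proposition~\ref{prop:planar-defect-evaluation} together with the codimension-1 theory of \cite{Brown_Jordan_2025}, and invariance at the spine comes from matching each local move to a centered-bimodule axiom (invertibility and naturality of $\eta$, the half-braiding and monoidal compatibilities), which is exactly the content of Lemmas~\ref{lemma:bulk-and-planar-isotopy-inv1}, \ref{lemma:bulk-and-planar-isotopy-inv2}, \ref{lemma:bulk-and-planar-isotopy-inv4} and Corollary~\ref{cor:bulk-and-planar-isotopy-inv3}.

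One correction concerns your bullet on changing the retract $p$, i.e.\ moving the surface in each bulk region that $p$ collapses onto the spine. That case is really an instance of your first and third bullets, namely sweeping strands through the codimension-2 line. Lemma~\ref{lemma:dual-line} and Remark~\ref{rmk:orientation_and_evaluation_morphism} do not decide which wall absorbs a strand; as in the paper, they only justify fixing the orientation convention on the walls.
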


\begin{definition}\label{def:skein-equivalent}
    We will say $\Gamma_1$ and $\Gamma_2$ are \emph{\bf skein equivalent} and write $\Gamma_1\sim\Gamma_2$ whenever $\RT_\L(\Gamma_1) = \RT_\L(\Gamma_2)$ for some choice of isotopy representatives and retract. 
\end{definition}

We will break the proof of Theorem \ref{thm:skein-evaluation} into a series of arguments involving the diagrammatics of centered bimodule categories.

\begin{lemma}\label{lemma:bulk-and-planar-isotopy-inv1}
    With the same notations as above, the following identities hold:
    \begin{enumerate}
        \item \textbf{Left half-braiding:} 
        \begin{equation}\label{eq:graphic_left_half-braiding}
            \includegraphics[valign=c]{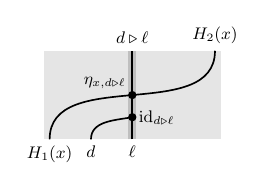}\sim\includegraphics[valign=c]{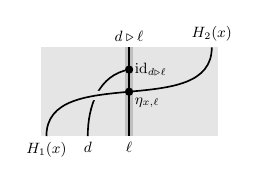}.
        \end{equation}
        \item \textbf{Right half-braiding:} 
        \begin{equation}\label{eq:graphic_right_half-braiding}
            \includegraphics[valign=c]{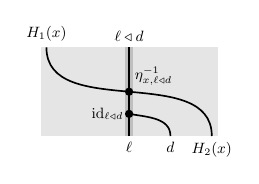}\sim\includegraphics[valign=c]{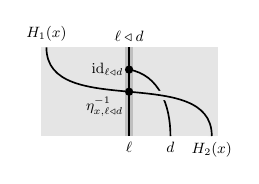}.
        \end{equation}
        \item \textbf{Monoidal structure:}
        \begin{equation}
            \includegraphics[valign=c]{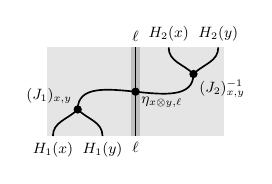}\sim\includegraphics[valign=c]{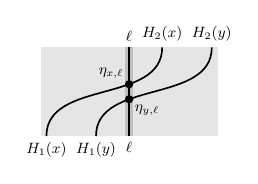}.
        \end{equation}
        \item \textbf{Naturality in the first variable:} 
        \begin{equation}
            \includegraphics[valign=c]{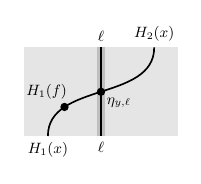}\sim\includegraphics[valign=c]{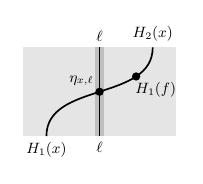}.
        \end{equation}
        \item \textbf{Naturality in the second variable:}
        \begin{equation}\label{eq:naturality_second_variable}
            \includegraphics[valign=c]{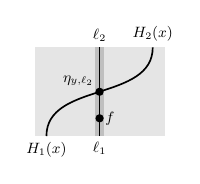}\sim\includegraphics[valign=c]{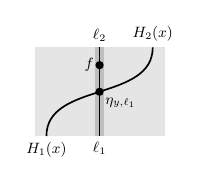}.
        \end{equation}
    \end{enumerate}
\end{lemma}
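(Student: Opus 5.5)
The plan is to check each of the five identities by evaluating both sides with the procedure of Theorem~\ref{thm:skein-evaluation}, after choosing the same retract $p$ for both pictures, and then comparing the two resulting morphisms of $\L$. Concretely, in each picture a single bulk strand coloured by some $x\in\A_i$ crosses the codimension-2 defect, possibly interacting with a strand or coupon in an adjacent codimension-1 defect. Retracting along $p$ sends that strand into the defect walls. There it is recoloured by $H_i(x)$, its double points with wall strands $d$ are labelled by the half-braidings $\sigma^i_{H_i(x),d}$ (or inverses), and its intersection with the defect line is labelled by $\eta_{x,\ell}$ (or $\eta^{-1}$). After separating heights, Proposition~\ref{prop:planar-defect-evaluation} expresses each side as an explicit composite of associators $m^L,m^R$, half-braidings, $\eta$'s and the action of morphisms. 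Once the retraction is fixed, the bookkeeping is mechanical, so the identities reduce to axioms of Definition~\ref{def:centered-bimodule}.

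\textbf{Items 1 and 2.} For the left half-braiding, the left picture evaluates to $\eta_{x,d_1\triangleright\ell}\circ m^L$, where the bulk strand passes the line above the point where $d_1$ joins. The right picture evaluates to $(\id_{d_1}\triangleright\eta_{x,\ell})\circ m^L\circ(\sigma^1_{H_1(x),d_1}\triangleright\id_\ell)$, where the strand first crosses $d_1$ in the wall. These agree exactly by diagram \eqref{eq:left-half-braiding}. Item 2 is the mirror computation and follows from \eqref{eq:right-half-braiding}.

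\textbf{Item 3.} Two parallel bulk strands $x,y$, or equivalently one strand coloured $x\otimes y$, cross the line. One side gives $\eta_{x\otimes y,\ell}$ conjugated by $J^{H_1}$ and $J^{H_2}$, since the fused strand is retracted and recoloured by $H_i(x\otimes y)$ with coherence maps at the fusion coupon. The other side gives the composite $\eta_{y,\ell\triangleleft H_2(x)}\circ\eta_{x,H_1(y)\triangleright\ell}$. Their equality is \eqref{eq:monoidal-structure-centered-bimodule}.

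\textbf{Items 4 and 5.} Sliding a bulk coupon $g\colon x\to x'$ across the defect line becomes the equality $\eta_{x',\ell}\circ(H_1(g)\triangleright\id)=(\id\triangleleft H_2(g))\circ\eta_{x,\ell}$, which is naturality of $\eta$ in its first variable. Sliding a line coupon $h\colon\ell\to\ell'$ past the crossing point is naturality in the second variable. In both cases the coupon is moved within a type (I) slice, so Proposition~\ref{prop:planar-defect-evaluation} guarantees that nothing else changes.

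The main obstacle is orientation and sign bookkeeping rather than algebra. Each identity must hold for every orientation of the strands and every direction of crossing, and for defects whose normals are not in the standard clockwise configuration. I would reduce these variants to the standard case in two ways. First, Lemma~\ref{lemma:dual-line} and Remark~\ref{rmk:orientation_and_evaluation_morphism} reduce flipped defect orientations to the standard one, replacing $\D_i$ by $\D_i^\vee$ and $\eta$ by $\eta^{-1}$. Second, reversing a strand orientation is absorbed by pivotal duality: the inverse-crossing versions follow by composing with $\eta^{-1}$, and downward strands are handled with the cup/cap zig-zag argument of Proposition~\ref{prop:planar-defect-evaluation}.
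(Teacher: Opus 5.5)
Your proposal is correct and follows the paper's argument. The paper likewise evaluates both sides of each picture and observes that the resulting morphisms are related exactly by the left/right half-braiding, monoidal and naturality axioms of Definition~\ref{def:centered-bimodule}. Your extra orientation bookkeeping is not needed inside this lemma, since the paper handles it separately via Remark~\ref{rmk:orientation_and_evaluation_morphism} in the proof of Theorem~\ref{thm:skein-evaluation}.
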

\begin{proof}
    Applying the evaluation process defined above to each of the identities yields the corresponding commutative diagram in $\mathcal{L}$ (cf. Definition \ref{def:centered-bimodule}).
\end{proof}

We also need the following version of the Reidemeister moves II and III with one of the strands running along the codimension-2 defect:

\begin{lemma}\label{lemma:bulk-and-planar-isotopy-inv2}
    The evaluation process defined above is invariant under the following version of Reidemeister moves II and III:
    \begin{enumerate}
        \item \textbf{Defect Reidemeister move II:}
        \begin{equation}
            \includegraphics[valign=c]{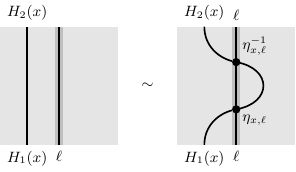}.
        \end{equation}
        \item \textbf{Defect Reidemeister move III:}
        \begin{equation}
          \includegraphics[valign=c]{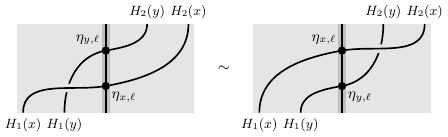}.
        \end{equation}
    \end{enumerate}
\end{lemma}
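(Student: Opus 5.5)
We reduce both moves to the algebraic identities already established, by running the evaluation procedure of Theorem \ref{thm:skein-evaluation} on each side and comparing the resulting morphisms in $\L$. We fix a retract $p$ and put both diagrams in generic position. The bulk strand (or strands) crossing the strand lying in the codimension-2 defect then becomes a sequence of vertices on the defect line, labelled by the balancing $\eta_{x,\ell}$ or its inverse. The sign is determined by the crossing, as in Step 2. Double points with the codimension-1 walls are labelled by the half-braidings $\sigma^i$. Because the evaluation is built slice by slice, as in Proposition \ref{prop:planar-defect-evaluation}, it suffices to compare the composite of the slices inside a small ball containing the move. Outside the ball the two diagrams agree, and the slices there contribute identical factors.

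\textbf{Reidemeister II.} After retraction, the two crossings of a bulk strand coloured by $x\in\A_i$ with the defect-line strand coloured by $\ell$ become two consecutive vertices on the line, labelled $\eta_{x,\ell}$ and $\eta^{-1}_{x,\ell}$. Between these vertices the pushed bulk strand lies in the adjacent wall $\D_{i+1}$, or in $\D_i$ for the opposite version. If the strand also passes a wall, the corresponding $\sigma$ and $\sigma^{-1}$ appear in adjacent slices. By functoriality of the action, the composite collapses to $\eta^{-1}_{x,\ell}\circ\eta_{x,\ell}=\id$ together with $\sigma^{-1}\sigma=\id$. This is exactly the evaluation of the diagram without crossings. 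The variant where the bulk strand goes around the other side of the line is handled the same way.

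\textbf{Reidemeister III.} The move involves the defect-line strand and two bulk strands $x,y$, possibly from different regions, that cross each other near the line. The two sides differ by sliding the bulk crossing past the line. On one side we obtain a composite of $\eta_{x,-}$, $\eta_{y,-}$ and, afterwards, the bulk braiding $c_{x,y}$ pushed into a wall. Pushed into the wall, this braiding is $H(c_{x,y})$ or, for strands from different regions, a half-braiding $\sigma$. On the other side the braiding comes first and the balancings follow. We first rewrite the pair of consecutive balancings using the monoidal compatibility \eqref{eq:monoidal-structure-centered-bimodule}, giving a single $\eta_{x\otimes y,\ell}$ conjugated by $J^H$. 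We then move the braiding across it using naturality in the first variable, Lemma \ref{lemma:bulk-and-planar-isotopy-inv1}(4). Finally we re-expand using the monoidal structure, now in the order $y\otimes x$. When $x$ and $y$ come from different regions, or one of them lies in a wall, we use the left and right half-braiding compatibilities \eqref{eq:left-half-braiding} and \eqref{eq:right-half-braiding} in place of the monoidal one. This is the content of Lemma \ref{lemma:bulk-and-planar-isotopy-inv1}(1)--(2).

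\textbf{Main obstacle.} The main difficulty is bookkeeping in the III move. We must check that the retraction places the bulk crossing in a wall in such a way that its label is precisely the braiding or half-braiding entering the hexagon-type diagrams of Definition \ref{def:centered-bimodule}. We must also handle all orientation and sign cases. Where a wall's orientation does not match the convention, we pass to the dual defect via Lemma \ref{lemma:dual-line} and Remark \ref{rmk:orientation_and_evaluation_morphism}. This reduces every case to the single standard configuration, in which the inward normals point clockwise, treated above.
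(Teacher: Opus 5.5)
Your proposal is correct and follows essentially the paper's argument: Reidemeister II reduces to invertibility of $\eta_{x,\ell}$, and Reidemeister III is obtained by fusing the two balancings via the monoidal compatibility \eqref{eq:monoidal-structure-centered-bimodule}, sliding the braiding across using naturality of $\eta$ in the first variable, and splitting again by planar skein relations. Your extra case analysis (using the half-braiding compatibilities when a strand lies in a wall, and reducing to the standard orientation via duals) spells out what the paper leaves implicit.
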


\begin{proof}
  The first claim follows from invertibility of $\eta_{x,\ell}$.
  To pass a braiding through the defect, we use the monoidal structure to reduce to naturality as follows:
    \begin{multline*}
            \includegraphics[valign=c]{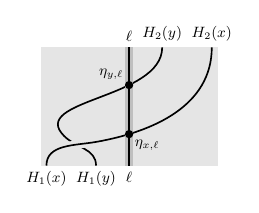}\sim\includegraphics[valign=c]{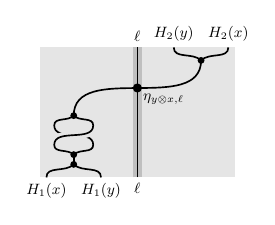}\\ \sim\includegraphics[valign=c]{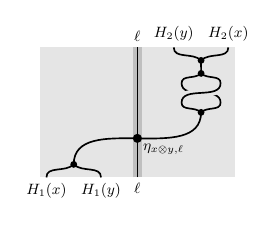}\sim\includegraphics[valign=c]{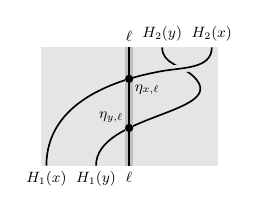},
        \end{multline*}
        where the first equality is the compatibility of $\eta$ with the monoidal structure, the second and the last are planar (non-stratified) skein relations and the third is a consequence of naturality.
\end{proof}

\begin{corollary}\label{cor:bulk-and-planar-isotopy-inv3}
    The following equalities hold:
    \begin{enumerate}
        \item \textbf{Left half-braiding 2.0:}
        \begin{equation}
            \includegraphics[valign=c]{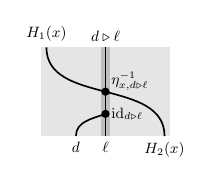}\sim\includegraphics[valign=c]{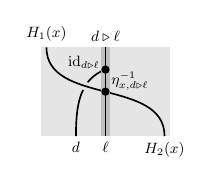}
        \end{equation}
        \item \textbf{Right half-braiding 2.0:}
        \begin{equation}\label{eq:graphic_right_half-braiding2.0}
             \includegraphics[valign=c]{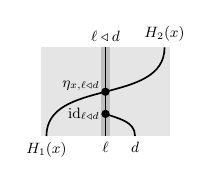}\sim\includegraphics[valign=c]{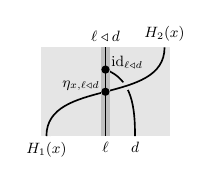}
        \end{equation}
    \end{enumerate}
\end{corollary}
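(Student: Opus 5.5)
The two ``2.0'' identities are the original half-braiding relations of Lemma \ref{lemma:bulk-and-planar-isotopy-inv1}, except that the bulk strand now crosses the codimension-2 defect on the other side of the defect coupon. Equivalently, a bulk strand running past a mixed vertex is pulled through the line defect rather than through the adjacent codimension-1 wall. I would therefore derive each 2.0 identity from its original version, together with the defect Reidemeister moves of Lemma \ref{lemma:bulk-and-planar-isotopy-inv2}, rather than evaluating directly.

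For the \textbf{left half-braiding 2.0}, I would start from the left-hand diagram. First I would insert a cancelling pair of crossings of the bulk strand with the codimension-2 defect, using the defect Reidemeister II move; this is justified by the invertibility of $\eta_{x,\ell}$. This puts the configuration locally into the shape of the left-hand side of \eqref{eq:graphic_left_half-braiding}. I would then apply \eqref{eq:graphic_left_half-braiding} to trade the passage through $\D_1$ for a passage through the line. The leftover crossing pair is removed by a second Reidemeister II move. If the strand must also pass over another strand lying in the line, I would use the defect Reidemeister III move to reposition it.

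At the algebraic level, this amounts to a diagram chase. One composes the left half-braiding hexagon \eqref{eq:left-half-braiding} with $\eta$ and $\eta^{-1}$, and uses naturality of $\eta$ in the second variable (item 5 of Lemma \ref{lemma:bulk-and-planar-isotopy-inv1}) to slide the defect coupon past the crossing vertex. I would check that the resulting equality of morphisms in $\L$ is exactly \eqref{eq:left-half-braiding} with inverted $\eta$'s rearranged.

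The \textbf{right half-braiding 2.0} \eqref{eq:graphic_right_half-braiding2.0} is handled symmetrically using \eqref{eq:graphic_right_half-braiding}. Alternatively, it follows from the left version by Lemma \ref{lemma:dual-line}: reversing the orientation of the line and replacing $\D_1,\D_2$ by $\D_2^\vee,\D_1^\vee$ with balancing $\eta^{-1}$ swaps left and right. Remark \ref{rmk:orientation_and_evaluation_morphism} guarantees the evaluation is unchanged under this relabelling.

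The main obstacle I expect is bookkeeping rather than conceptual. It lies in matching orientations and crossing signs: deciding whether $\eta$ or $\eta^{-1}$ labels each line crossing, and whether $\sigma$ or $\sigma^{-1}$ labels each wall crossing, so that the inserted Reidemeister II pair really cancels against the pieces appearing in \eqref{eq:graphic_left_half-braiding}. I would fix the convention of Theorem \ref{thm:skein-evaluation}, with all inward normals pointing clockwise, and verify the sign of each crossing once before running the argument.
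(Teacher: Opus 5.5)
Your proposal is correct and is essentially the paper's argument. The paper obtains both 2.0 identities by pre- and post-composing \eqref{eq:graphic_left_half-braiding} and \eqref{eq:graphic_right_half-braiding} with the inverse crossings and cancelling via Reidemeister II, which is your procedure of inserting a cancelling pair, applying the original identity and removing the leftover pair (the defect Reidemeister III step, the naturality chase and the alternative via Lemma \ref{lemma:dual-line} are not needed).
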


\begin{proof}
    This follows by pre- and post-composing the equalities \eqref{eq:graphic_left_half-braiding} and \eqref{eq:graphic_right_half-braiding} and applying Reidemeister move II.
\end{proof}

\begin{lemma}\label{lemma:bulk-and-planar-isotopy-inv4}
  The morphism $\eta_{x,\ell}$ is natural in $\L$, so that
  \begin{equation}\label{eq:naturality-of-eta}
        \includegraphics[valign=c]{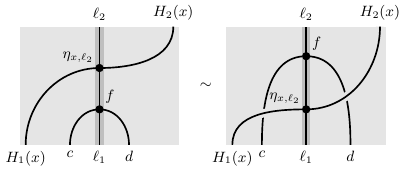}
    \end{equation}
\end{lemma}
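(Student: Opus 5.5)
This will follow the pattern of Lemma \ref{lemma:bulk-and-planar-isotopy-inv1}: we evaluate both sides of \eqref{eq:naturality-of-eta} with $\RT^n_\L$ and reduce the graphical identity to a commutative square in $\L$. The picture shows a bulk strand coloured by $x\in\A_i$ crossing the codimension-2 defect, with a coupon labelled by a morphism $g\colon \ell\to\ell'$ of $\L$ sitting on the defect line either just below or just above the crossing. Naturality in the first variable, i.e.\ in $x$, was already handled in Lemma \ref{lemma:bulk-and-planar-isotopy-inv1}. What remains is to slide the $\L$-coupon past the crossing vertex.

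\textbf{Step 1 (retract and decompose).} Choose the deformation retract $p$ and a height function as in Theorem \ref{thm:skein-evaluation} so that, on each side, the crossing and the coupon $g$ are the only vertices on the defect line and occur at distinct heights. Step 2 of the evaluation labels the crossing by $\eta_{x,\ell}$ (or $\eta_{x,\ell'}$) or its inverse, according to the sign of the crossing. The coupon $g$ is a mixed vertex with no planar edges, so it evaluates to $g$ itself. The bulk strand becomes $H_i(x)$ below the crossing and $H_{i+1}(x)$ above it, and in the slice containing $g$ it acts by the identity.

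\textbf{Step 2 (compute).} By Proposition \ref{prop:planar-defect-evaluation}, one side evaluates to
\[
\eta_{x,\ell'}\circ(\id_{H_i(x)}\triangleright g)
\]
and the other to
\[
(g\triangleleft\id_{H_{i+1}(x)})\circ\eta_{x,\ell}.
\]
These agree exactly when $\eta_{-,-}$ is natural in its second variable. That holds because $\eta$ in \eqref{eq:balancing} is a natural isomorphism of functors $\L\to\L$, namely $H_1(x)\triangleright(-)\Rightarrow(-)\triangleleft H_2(x)$, so naturality in $\ell$ is part of the data of a centered bimodule, Definition \ref{def:centered-bimodule}. For the negative crossing, we invert both sides of the square.

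\textbf{Main obstacle.} The only subtle point is checking that the evaluation of each side does not depend on the chosen slicing. In particular, if the bulk strand also meets a codimension-1 wall near the coupon, we should reduce, using the Reidemeister moves of Lemma \ref{lemma:bulk-and-planar-isotopy-inv2} and the monoidal compatibility, to the local configuration above. I would handle this by first isotoping the strand, away from the defect line, into a small neighbourhood of the crossing; this is legitimate by the isotopy invariance already established for planar evaluation.
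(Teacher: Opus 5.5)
Your Step 2 computation is fine, but you explicitly restrict to a coupon with ``no planar edges'', i.e.\ a bare morphism $g\colon\ell\to\ell'$ of $\L$ on the defect line. That case is exactly the identity \eqref{eq:naturality_second_variable} in Lemma \ref{lemma:bulk-and-planar-isotopy-inv1}, so your argument only reproves something already established.

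The coupon $f$ in \eqref{eq:naturality-of-eta} is a general codimension-2 coupon as in Definitions \ref{def:decorated-ribbon-graph} and \ref{def:planar-defect-cat}. It has incident edges $d_1,d_2$ in the adjacent codimension-1 walls; incident bulk edges retract to wall edges labelled $H_i(W_i)$, so they are of the same kind. It therefore evaluates to $f\colon d_1\triangleright\ell\triangleleft d_2\to\ell'$. Sliding the crossing point of a bulk strand along the defect line past such a coupon is exactly the isotopy the proof of Theorem \ref{thm:skein-evaluation} still needs, and the earlier lemmas do not cover it.

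What is missing is the compatibility of $\eta$ with the half-braidings. As the crossing point moves past $f$, the retracted strand ($H_1(x)$ in one wall, $H_2(x)$ in the other) is forced to cross or uncross the wall edges of $f$. This creates or removes double points labelled by $\sigma^1$, $\sigma^2$ (or their inverses). You cannot isotope these away, so the plan in your last paragraph does not address the issue.

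Naturality in the second variable, applied to $f$ regarded as a morphism out of $d_1\triangleright\ell\triangleleft d_2$, does move $\eta$ past $f$. However, it produces $\eta_{x,d_1\triangleright\ell\triangleleft d_2}$ rather than $\eta_{x,\ell}$. You then need the identity, up to associators,
\[
\eta_{x,d_1\triangleright\ell\triangleleft d_2}=(\id_{d_1}\triangleright\id_\ell\triangleleft\sigma^2_{H_2(x),d_2})\circ(\id_{d_1}\triangleright\eta_{x,\ell}\triangleleft\id_{d_2})\circ(\sigma^1_{H_1(x),d_1}\triangleright\id_{\ell\triangleleft d_2}),
\]
which is precisely the content of the axioms \eqref{eq:left-half-braiding} and \eqref{eq:right-half-braiding}.

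The paper does this graphically. It factors $f$ into the action vertices merging the wall edges into the line, followed by a bare $\L$-morphism. It then applies \eqref{eq:naturality_second_variable} to the bare part, \eqref{eq:graphic_left_half-braiding} to the left merge, and \eqref{eq:graphic_right_half-braiding2.0} (the Reidemeister II variant from Corollary \ref{cor:bulk-and-planar-isotopy-inv3}) to the right merge.
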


    Note that \eqref{eq:naturality-of-eta} holds regardless of the orientation of the bulk defect.

\begin{proof}
    By the construction of the evaluation morphism, we can factor the coupon labelled $f$ as follows:
    \begin{equation}
        \includegraphics{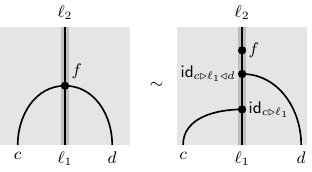}
    \end{equation}
    The result then follows from \eqref{eq:naturality_second_variable}, \eqref{eq:graphic_left_half-braiding} and \eqref{eq:graphic_right_half-braiding2.0}.
\end{proof}

\begin{proof}[Proof of Theorem \ref{thm:skein-evaluation}.]
    First note that Remark \ref{rmk:orientation_and_evaluation_morphism} implies that replacing a codimension-1 defect by its dual and switching orientations does not affect the evaluation, so there is no loss of generality in assuming that the outward normal vectors of all surface point counter-clockwise. Next, we show that the evaluation is invariant under isotopy and choice of retract.

    By Proposition \ref{prop:planar-defect-evaluation} and \cite[prop 2.6]{Brown_Jordan_2025}, $\RT_\L$ is invariant under isotopies supported in a ball whose retract does not intersect the codimension-2 defect.
    Invariance under changes impacting the codimension-2 defect follows directly from Lemmas \ref{lemma:bulk-and-planar-isotopy-inv1}, \ref{lemma:bulk-and-planar-isotopy-inv2}, \ref{lemma:bulk-and-planar-isotopy-inv4} and Corollary \ref{cor:bulk-and-planar-isotopy-inv3}.

\end{proof}

\begin{remark}[Point defects]\label{rem:point-defects}
  Point defects in 3-dimensional skein theory are fundamentally different than line and surface defects.
Topologically, this reflects two facts: points and lines do not link non-trivially in three dimensions, and point defects are not the thickening of some defect in two dimensions.
Algebraically, we run into the fact that codimension-3 defects should be coloured by 3-morphisms in $\Alg_2(\Cat_\K)$, which are functors.
A single functor does not have graphical calculus in the way that ribbon, pivotal, and even linear categories do.
Therefore we've lost the basis for the evaluation-based skein theoretic approach which works so well in lower codimensions.
\end{remark}

\subsection{Skein modules and categories}\label{sec:skein-modules-and-categories}
The skein relations detailed in Section \ref{sec:evaluation} allow us to define skein modules, categories, and functors built from decorated 3-manifolds.
This closely follows the treatments in \cite{GJS2023,Brown_Jordan_2025}.

\begin{definition}\label{def:relative-defect-skein-module}
Fix a decorated $3$-manifold $M$, possibly with boundary, and a possibly empty labelling $X$ on $\partial M$.
For each point $p \in M$, let $D_p$ denote the decorated disk such that $D_p\times [0,1]$ gives a decorated neighbourhood of $p$, and let $\RT_p$ denote the corresponding evaluation functor.

    The \emph{\bf relative defect skein module} $\Sk(M,X)$ is the $\K$-module spanned by isotopy classes of stratified ribbon graphs in $M$ compatible with $X$, modulo skein relations for every decorated embedding $\D_p \times [0,1] \hookrightarrow M$, induced by $\RT_p$.

    In the case where $X = \varnothing$, we write $\Sk(M)$ and call it the \textbf{defect skein module}.
\end{definition}

Relative skein modules provide the hom-spaces for the skein category of a decorated surface.
\begin{definition}\label{def:defect-skein-cat}
    Let $\Sigma$ denote a decorated surface. The \emph{\bf{defect skein category}} $\SkCat(\Sigma)$ is the $\K$-linear category with 
    \begin{description}
        \item[objects:] Labellings of $\Sigma$.
        \item[morphisms:] The vector space of homomorphisms from $X$ to $Y$ is given by the relative defect skein module
        \begin{align}
            \Hom(X,Y) := \Sk(\Sigma \times [0,1],\overline{X} \sqcup Y),
        \end{align}
        where $\overline{X}$ denotes a labelling $X$ of $\Sigma \times \{0\}$ with all orientations reversed and $Y$ is a labelling of $\Sigma \times \{1\}$. The composition is induced by $\Sigma\times[0,1] \sqcup \Sigma\times[0,1] \hookrightarrow \Sigma \times [0,1]$ and a smoothing at boundary points.
    \end{description}
\end{definition}

As in the bulk theory, the evaluation functor $\Rib_\L \to \L$ induces a categorical equivalence, so that the skein category of the neighbourhood of a defect recovers the original defect data:
\begin{lemma}\label{lemma:skein_cat_of_line_defect_is_L}
    Consider a decorated surface $\DD_n$ with $n\geq 2$ and codimension-2 defect $\L$. We have an equivalence $\SkCat(\DD_n) \cong \L$ of $\SkCat(\partial\DD_n\times[0,1])$-module categories.
\end{lemma}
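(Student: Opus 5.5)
The plan is to construct the functor directly from the evaluation of Theorem \ref{thm:skein-evaluation}, then check that it is fully faithful, essentially surjective, and compatible with the boundary actions. We define $E:\SkCat(\DD_n)\to\L$ on objects by sending a labelling $X$ of $\DD_n$ to $\RT^n_\L(X)$. Concretely, push each labelled point onto the central codimension-2 point along the retraction $p$, using the functors $H_i$ for bulk points and the actions of the $\D_i$. This gives an object
\[
(F_1(W_1)\otimes V_1\boxtimes\cdots\boxtimes F_n(W_n)\otimes V_n)\triangleright \ell,
\]
where $\ell$ is the label at the centre, or $\Dist$ if there is none. On morphisms, a skein in $\DD_n\times[0,1]$ is a decorated ribbon graph, and we send it to its evaluation. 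By Definition \ref{def:relative-defect-skein-module}, the hom-spaces of $\SkCat(\DD_n)$ are precisely $\Hom_{\Rib^n_\L}$ modulo the kernel of local evaluations. Since the whole cylinder $\DD_n\times[0,1]$ is itself a basic neighbourhood, we expect these relations to coincide with the kernel of the single functor $\RT^n_\L$. So $E$ is well defined and faithful, once we check that relations imposed in smaller embedded balls already lie in the kernel of $\RT^n_\L$. That check is the locality of Theorem \ref{thm:skein-evaluation}: evaluation is compatible with gluing vertical slices and with the actions.

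Fullness comes from Theorem \ref{thm:skein-evaluation}, which states that $\RT^n_\L$ is full. Explicitly, given $g:E(X)\to E(Y)$ in $\L$, we form the graph in which all strands of $X$ descend to a single coupon on the codimension-2 line labelled by $g$ (a morphism of the shape \eqref{eq:line-coupon}), and then emanate to $Y$. Its evaluation is $g$, up to the coherence isomorphisms used in identifying $E(X)$.

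For essential surjectivity, every $\ell\in\L$ is $E$ of the labelling consisting of a single point on the codimension-2 defect coloured by $\ell$. This uses that labellings in $\Sigma_0$ may carry arbitrary objects of $\L$, by Definition \ref{def:labelling}.

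It remains to show compatibility with the $\SkCat(\partial\DD_n\times[0,1])\cong\tr(\D_1\boxtimes_{\A_2}\cdots\boxtimes_{\A_n}\D_n)$-module structures (Remark \ref{rem:skeiny-version-of-defect-data}). The action on $\SkCat(\DD_n)$ is by inserting an annular collar near the boundary. On $\L$ it is the module structure unpacked in Lemma \ref{lemma:unpacking-line-data}, via the actions of the $\D_i$ together with the balancings $\eta$. Retracting a labelling in the collar onto the defects produces exactly the acting object, so $E$ intertwines the actions on objects. The module-functor coherence maps are given by the skeins that slide collar strands radially inward, and their evaluations are identities or half-braidings/balancings by Step 2 of the construction.

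I expect the main obstacle to be the faithfulness step, namely showing that the relations defining the relative skein module are no more than the kernel of the global $\RT^n_\L$. The approach I would try first is the standard one from the non-defect case. Any local relation in a subball $D_p\times[0,1]$ can be completed to a global one by composing with fixed skeins outside the ball. By the isotopy- and retract-independence proven for Theorem \ref{thm:skein-evaluation}, and the fact that evaluation factors through the slices of Proposition \ref{prop:planar-defect-evaluation}, the global evaluation is then a composite in which the local evaluation appears as a factor. Hence locally null skeins evaluate to zero globally. The balancing coherences of Lemma \ref{lemma:bulk-and-planar-isotopy-inv1} handle the case where $D_p$ meets a codimension-1 wall but not the line.
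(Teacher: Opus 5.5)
Your proposal is correct and takes the same route as the paper, which gives no separate proof and only says that, as in the bulk theory, the evaluation functor $\RT^n_\L\colon\Rib^n_\L\to\L$ induces the equivalence; your steps (locality of evaluation for well-definedness, the whole cylinder as a local model for faithfulness, a single line coupon for fullness, a single central point for essential surjectivity, and collar insertion for the module structure) supply the details the paper leaves implicit. The one point to tighten is fullness: because the strands of $Y$ leave the coupon upward, the coupon must be labelled by the mate of $g$ under the duality used in Proposition~\ref{prop:planar-defect-evaluation}, not by $g$ itself.
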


As in \cite{Brown_Jordan_2025,GJS2023}, decorated cobordisms give rise to bimodules for skein categories.
\begin{definition}\label{def:defect-skein-functor}
  Let $M$ be a decorated 3-manifold and fix a decomposition $\partial M = \overline{\Sigma}_{in}\cup\Sigma_{out}$.
  The \emph{\bf defect skein functor} is the $\K$-linear functor given on objects by
  \begin{equation}
    \begin{aligned}
      \underline{\Sk}(M) : \SkCat(\Sigma_{in})^{op} \times \SkCat(\Sigma_{out}) & \to \Vect \\
      X,Y &\mapsto \Sk(M;\overline{X}\sqcup Y).
    \end{aligned}
  \end{equation}
  On morphisms it's given by the linear maps
  \begin{equation}
    \Sk(\Sigma_{in}\times[0,1];\overline{X}_1\sqcup\overline{X}_2) \times \Sk(M;\overline{X}_2\sqcup Y_1) \times \Sk(\Sigma_{out}; Y_1\sqcup Y_2) \to \Sk(M;\overline{X}_1\sqcup Y_2)
  \end{equation}
  induced by gluing skeins along their shared endpoints.
\end{definition}

\subsubsection{Internal skein algebras and modules}\label{sec:internal-skein-constructions}

Skein categories are functorial with respect to decorated embeddings and their isotopies.
This functoriality induces additional algebraic structures on skein categories, powering both the constructions of internal skein algebras and modules \cite{BBJ2018Integrating}.
We state the definitions, but refer to \cite{BBJ2018Integrating,GJS2023,Brown_Jordan_2025} for more in depth discussions. 

Let $\Sigma$ be a decorated surface with non-empty boundary and denote by $\G \subset \partial \Sigma$ a collection of disjoint boundary intervals, called \emph{\bf gates}, disjoint from any defects. For brevity's sake, let $\mathcal{S}_\G := \SkCat(\G\times [0,1])$.
The inclusion $\G \times [0,1] \sqcup \Sigma \hookrightarrow \Sigma$ makes $\SkCat(\Sigma)$ into an $\mathcal{S}_\G$-module category. This action restricted to the empty skein is called the \emph{\bf disk insertion functor}
\begin{align}
    \P : \mathcal{S}_\G \to \SkCat(\Sigma).
\end{align}

After free completion $\widehat{\P}$ admits a right adjoint $\widehat{\P}^R$, the monad of this adjunction provides an algebra that tracks all possible labellings along $\G$.
\begin{definition}
    The \emph{\bf{internal defect skein algebra}} is the algebra object 
    \begin{align}
      \SkAlg^{int}_\G(\Sigma) := \widehat{\P}^R\widehat{\P}(\widehat{\1}) \in \widehat{\mathcal{S}}_\G.
    \end{align}
\end{definition}
Note that internal skein algebras are exactly the internal endomorphism algebras of the empty labelling for the action $\mathcal{S}_\G \times \SkCat(\Sigma) \to \SkCat(\Sigma)$.

The internal skein algebra of $\DD_1$ is commutative.
\begin{lemma}\label{lemma:d1-is-commutative}
  Consider $\DD_1$ decorated by the central $(\A,\C)$-bimodule $\D$.
  The internal skein algebra $\SkAlg^{int}(\DD_1)$ is a commutative algebra object in the free co-completion of $\A\boxtimes \C^{bop}$.
\end{lemma}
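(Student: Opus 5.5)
We want to show that $\SkAlg^{int}(\DD_1)$ is commutative as an algebra object in $\widehat{\A\boxtimes\C^{\bop}}$. The plan is the standard Eckmann--Hilton-type argument used for internal skein algebras of the disk, adapted to the stratified setting. Concretely, $\DD_1$ is the disk with a single codimension-1 defect arc separating an $\A$-region from a $\C$-region. Take the gate $\G\subset\partial\DD_1$ to consist of two intervals, one in each bulk region, so that $\mathcal{S}_\G\cong \A\boxtimes\C^{\bop}$. The orientation reversal on the $\C$ side is what produces the braided opposite.

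\textbf{Step 1: identify the algebra.} Since $\DD_1$ with these gates deformation-retracts (in the stratified sense) onto the defect, excision together with the codimension-1 analogue of Lemma \ref{lemma:skein_cat_of_line_defect_is_L} gives $\SkCat(\DD_1)\simeq\D$. Under this equivalence the disk insertion functor $\P$ becomes the central functor $H:\A\boxtimes\C^{\bop}\to\D$. It follows that
\[\SkAlg^{int}(\DD_1)\cong \widehat{H}^R\widehat{H}(\1)=\underline{\mathrm{End}}(\1_\D)\]
is the internal endomorphism algebra of the unit of $\D$ relative to the action of $\A\boxtimes\C^{\bop}$.

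\textbf{Step 2: use centrality.} The functor $H$ lifts to a braided functor into $\Zdr(\D)$. For an algebra of the form $\widehat{H}^R(\1)$ with $H$ factoring through the Drinfeld centre, the monad $\widehat{H}^R\widehat{H}$ is lax braided monoidal. The unit of $\D$ is central with the trivial half-braiding, so $\widehat{H}^R(\1)$ is a commutative algebra. This is the general fact used in \cite{BBJ2018Integrating,Brown_Jordan_2025}, and it can also be seen via Eckmann--Hilton.

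Skein-theoretically, the argument runs as follows. The product of two internal skeins $a,b$ is computed by stacking $a$ below $b$ in $\DD_1\times[0,1]$ and then merging their gate strands. Each internal skein can be isotoped so that it consists of strands from the gates running into a coupon on the defect wall. One then isotopes $a$ around $b$ inside the cylinder, passing behind it. The strands of $a$ coming from the $\A$-gate cross those of $b$ with the braiding $c_{\A}$, while on the $\C$ side the crossing is the inverse braiding, which is $c_{\C^{\bop}}$. This is exactly the braiding of $\A\boxtimes\C^{\bop}$. The coupons themselves can be swapped vertically on the wall using the half-braiding relations from \cite{Brown_Jordan_2025}. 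The result is $m\circ c_{a,b}=m$, which is commutativity.

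\textbf{Main obstacle.} The hardest point is checking that the isotopy really produces $c$ on the $\A$ side and $c^{-1}$ on the $\C$ side with the correct gate orientations. This needs the orientation convention of Remark \ref{rmk:orientation_and_evaluation_morphism}. If necessary, I would verify it by a direct evaluation using $\RT_\D$ on the generating skeins, reducing to the naturality of the half-braiding $\sigma$.
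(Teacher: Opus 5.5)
Your proposal is correct in substance, but Steps 1--2 take a different route from the paper. The paper's proof never passes through $\D$. It uses the coend presentation \eqref{eq:its-a-coend} and notes that $\sigma_{S,S}$ is induced termwise by $\sigma_{X,Y}$ in $\mathcal{S}_\G$. The coend relation then moves $\sigma_{X,Y}$ off the gate labels and onto the skein as a braid at the gates, and a single isotopy finishes (Figure \ref{fig:d1-is-commutative}). Your closing skein sketch is essentially this argument, but the coend step is hidden in ``merging gate strands''. It should be stated, since it is exactly what turns $\mu\circ\sigma_{S,S}=\mu$ into a statement about skeins. Also, swapping the two coupons inside the wall is already a stratified isotopy, so no half-braiding relation is needed there. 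Your algebraic route instead rewrites $S:=\SkAlg^{int}(\DD_1)$ as $\widehat H^R(\1_\D)$, via $\SkCat(\DD_1)\simeq\D$ with $\P$ corresponding to $H$. It then reduces to the standard fact that the right adjoint of a central functor sends the unit to a commutative algebra. This separates the topology cleanly from the algebra and removes your ``main obstacle''. Commutativity comes out with respect to exactly the braiding from which $H$ is braided into $\Zdr(\D)$, namely that of $\A\boxtimes\C^{\bop}$; reversing a braiding globally does not change the condition $\mu\circ\sigma_{S,S}=\mu$. In the paper's route, by contrast, the over/under conventions at the two gates have to be read off the picture. The price is the codimension-1 analogue of Lemma \ref{lemma:skein_cat_of_line_defect_is_L}, compatible with the $\mathcal{S}_\G$-action, which the paper states only for $n\geq 2$. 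It follows from fullness and essential surjectivity of $\RT_\D$ (one coupon on the wall realises any morphism, one point realises any object) and from skein relations being defined as its kernel. It does not follow from a deformation retraction, since skein categories are not homotopy invariant. One correction: your assertion that the monad $\widehat H^R\widehat H$ is lax braided monoidal is false in general, and you do not need it. Argue directly instead. Under the adjunction, $\mu$ and $\mu\circ\sigma_{S,S}$ correspond to $\epsilon\otimes\epsilon$ and $(\epsilon\otimes\epsilon)\circ\widehat H(\sigma_{S,S})$, where $\epsilon\colon\widehat H(S)\to\1$ is the counit. Up to $J^H$, $\widehat H(\sigma_{S,S})$ is the half-braiding of $\widehat H(S)$ with $\widehat H(S)$. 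Naturality of the half-braiding in its $\D$-variable, applied to $\epsilon$, together with its triviality against $\1$, then gives $\mu\circ\sigma_{S,S}=\mu$.
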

Note that this holds in particular for $\D = \A$ the trivial $(\A,\A)$-defect, in which case we get a commutative algebra object in $\A\boxtimes \A^{bop}$.
\begin{proof}
  Let $S := \SkAlg^{int}(\DD_1)$ in this proof, and for brevity's sake suppose that $\A,\C$ are co-complete. The claim is that the following diagram commutes:
  \begin{equation}
    \begin{tikzcd}[sep=small]
      S \otimes S \ar[rr,"\sigma_{S,S}"]\ar[rd,"\mu"'] & & \ar[ld,"\mu"] S \otimes S \\
                                                       & S &
    \end{tikzcd}
  \end{equation}
  where $\mu$ is the internal multiplication on $S$. 
  By \eqref{eq:its-a-coend} and co-continuity of the tensor product, the braiding on the internal skein algebra is induced by the braiding $\sigma_{X,Y} : X\otimes Y \to Y\otimes X$ applied to the terms of the two coends.
  The relation $\mu = \mu\sigma$ follows directly from the coend relation between the $X\otimes Y$ and $Y\otimes X$ summands induced by $\sigma_{X,Y}$. See Figure \ref{fig:d1-is-commutative}, noting that the left most skein is the result of $\mu$ and the right most skein is the result of $\mu\sigma_{S,S}$.

  \begin{figure}
    \centering
    \includegraphics{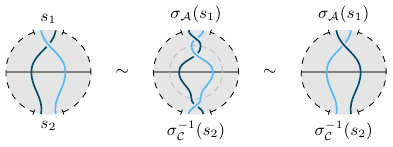}
    \caption{Commutativity of the defect disk, see Lemma \ref{lemma:d1-is-commutative}. The first equivalence is a coend relation and the second is an isotopy.}\label{fig:d1-is-commutative}
  \end{figure}
\end{proof}
Conversely, this implies that defects between pairs of ribbon categories $\A,\C$, can be constructed from commutative algebra objects in their product $\A\boxtimes \C^{bop}$.

The internal construction extends to general decorated three manifolds.
\begin{definition}
Let $M$ be a decorated $3$-manifold with a fixed decomposition $\partial M = \Sigma \cup_C R$. We treat $\partial \Sigma = C$ as the boundary of $\partial M$ and require that labellings are contained in $\Sigma$.  
    The \emph{\bf{defect internal skein module}} $\Sk^{int}_\G(M) \in \widehat{\SkCat(\G \times [0,1])}$ is given by the functor
    \begin{align}
        V \mapsto \Sk(M,\P(V)).
    \end{align}
    Where, as above, $\P$ is the disk insertion functor induced by some embedding $\G \times [0,1] \hookrightarrow \partial \Sigma$. 
\end{definition}

Note that $\Sk^{int}(\Sigma\times[0,1]) \cong \SkAlg^{int}(\Sigma)$ in $\widehat{\mathcal{S}}_\G$.
Both internal skein algebras and modules can be expressed as coends \cite{GJS2023,Brown_Jordan_2025}, leading to a more concrete description in terms of skeins with state data at the gates.
\begin{equation}\label{eq:its-a-coend}
    \Sk^{int}(M) \cong \int^{X \in \mathcal{S}_{\G}} \widehat{X} \otimes \Sk(M;\P(X)).
\end{equation}
We recover traditional (non-internal) skein algebras and modules by evaluating at $\Dist_{\mathcal{S}_\G}$, i.e. $\Sk^{int}(M)(\Dist_{\mathcal{S}_\G}) = \Sk(M)$.

\subsection{The composition junction}\label{sec:composition-junction}

Fix two central algebras $F_1, \sigma_1 : \A \boxtimes \B^{\bop} \to \Zdr(\D_1)$ and $F_2, \sigma_2 : \B \boxtimes \C^{\bop} \to \Zdr(\D_2)$.
We have seen that their composition is the balanced Deligne--Kelly tensor product $\D_1\boxtimes_\B \D_2$. 
In this section we consider composition from a topological perspective.

It is sometimes useful to compose two codimension-1 defects in a small region, and not be restricted to composing only globally parallel defects.
This need for locality leads us to find a canonical codimension-2 defect which forms a three way junction between $\D_1, \D_2$, and $\D_1\boxtimes_\B \D_2$, as shown in Figure \ref{fig:composition-junction}. 

\begin{figure}
  \begin{center}
    \includegraphics{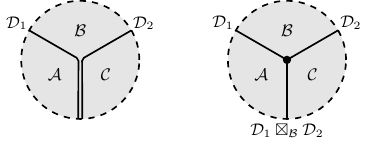}
    \caption{Composition can be modelled by a three-way junction, see Lemma \ref{lemma:composition-defect}.}\label{fig:composition-junction}
  \end{center}
\end{figure}

\begin{lemma}\label{lemma:composition-defect}
  Fix central algebras $\D_1, \D_2$, as above. Their Deligne--Kelly balanced tensor product $\D_1\boxtimes_\B \D_2$ has a canonical\footnote{Note though that the balanced tensor product is defined up to categorical equivalence.} structure of a codimension-2 defect between $\D_1, \D_2$, and their composition $\D_1\boxtimes_\B \D_2$.
\end{lemma}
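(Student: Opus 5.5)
We must show $\L := \D_1\boxtimes_\B\D_2$, pointed by its unit, is a module for the trace of the three-way composite of Definition \ref{def:data-for-many-surfaces}. By Lemma \ref{lemma:unpacking-line-data}, this amounts to exhibiting $\L$ as a $\D_1\boxtimes\D_2\boxtimes(\D_1\boxtimes_\B\D_2)^\vee$-module together with three compatible centered structures, one for each bulk region $\A$, $\B$, $\C$ around the junction. Orient the junction so that $\D_1,\D_2$ act on the left and the composite $\D_{12}:=\D_1\boxtimes_\B\D_2$ acts on the right; after dualising via Lemma \ref{lemma:dual-line}, this matches the conventions of Figure \ref{fig:generic-junction}.

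\textbf{Module structure.} The left $\D_1\boxtimes\D_2$-action is induced by the universal functor $\D_1\boxtimes\D_2\to\D_1\boxtimes_\B\D_2$ followed by left multiplication in the algebra $\D_{12}$. The right $\D_{12}$-action is right multiplication. Associativity and the bimodule compatibility follow from associativity of the tensor product on $\D_{12}$; this is the algebra structure on the balanced product, \cite{Douglas_Schommer-Pries_Snyder_2019,BJS2021}. For the pointing, take $\Dist=\idty_{\D_{12}}$.

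\textbf{Centered structures.}
\begin{itemize}
\item For the $\A$-region between $\D_1$ and $\D_{12}$, the isomorphism $\eta_{a,\ell}:F_1(a)\triangleright\ell\to\ell\triangleleft F_{12}(a)$ is the half-braiding of $F_{12}(a)=F_1(a)\boxtimes\idty$ with $\ell$ in $\Zdr(\D_{12})$, pre-composed with the identification $F_1(a)\triangleright\ell\cong F_{12}(a)\otimes\ell$.
\item The $\C$-region between $\D_2$ and $\D_{12}$ is handled symmetrically, using $\sigma_2$.
\item For the $\B$-region between $\D_1$ and $\D_2$, we need an isomorphism $F_1(b)\otimes\ell\cong F_2(b)\otimes\ell$ inside $\L$. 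This is exactly the $\B$-balancing that defines the relative product, i.e. the canonical isomorphism $(d_1\triangleleft b)\boxtimes d_2\cong d_1\boxtimes(b\triangleright d_2)$ in $\D_1\boxtimes_\B\D_2$, tensored with $\ell$.
\end{itemize}

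\textbf{Verification.} We must check the left half-braiding, right half-braiding and monoidal diagrams \eqref{eq:left-half-braiding}, \eqref{eq:right-half-braiding} and \eqref{eq:monoidal-structure-centered-bimodule} for each of the three structures.
\begin{itemize}
\item For the $\A$- and $\C$-structures, these reduce to the axioms of $\sigma_{12}$ as a braided central structure, namely the hexagon and naturality in $\Zdr(\D_{12})$.
\item For the $\B$-structure, the monoidal compatibility is the coherence of the balancing, i.e. the compatibility of the balancing with $J^{F_1}$ and $J^{F_2}$. The half-braiding compatibilities are naturality of the balancing with respect to the actions.
\end{itemize}

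\textbf{Main obstacle.} The hard part is the mutual compatibility that lets the three structures descend to a single module over the trace, not merely over the balanced product at each region separately. Concretely, we must check that the balancing isomorphism commutes with the half-braidings from $\sigma_1$ and $\sigma_2$, i.e. that the composite central structure on $\D_{12}$ is itself $\B$-balanced. I would deduce this from the construction of the central structure on $\D_1\boxtimes_\B\D_2$ in \cite{BJS2021}, where that structure is built precisely as the descent of $\sigma_1\boxtimes\sigma_2$ along the balancing.

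Canonicity then follows from the universal property of the balanced tensor product: any other model is equivalent to this one, and the equivalence transports all of the structure.
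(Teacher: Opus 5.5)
Your construction is essentially the paper's. It takes $\L=\D_1\boxtimes_\B\D_2$ with actions by multiplication, takes the $\A$- and $\C$-balancings from the half-braidings $\sigma_1,\sigma_2$ (the paper's $\mu$ and $\nu$ are exactly your half-braidings of $F_{12}(a)$ and $F_{12}(c)$ against $\ell$), and takes the $\B$-balancing from the defining balancing of the relative Deligne--Kelly product.

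The only difference is an action convention. The paper lets $\D_2$ act on the right of the second factor, and $\D_1\boxtimes_\B\D_2$ act on both sides, so its $\B$-balancing is the defining balancing conjugated by the half-braidings of $F_1$ and $F_2$. You let $\D_1,\D_2$ both act on the left and $\D_1\boxtimes_\B\D_2$ only on the right. This makes the single $\D_1\boxtimes\D_2\boxtimes(\D_1\boxtimes_\B\D_2)^\vee$-module of Lemma \ref{lemma:unpacking-line-data} manifest, and reduces your $\B$-balancing to the bare balancing tensored with $\ell$; the half-braiding of $F_1$ is then absorbed into the product on $\D_1\boxtimes_\B\D_2$.
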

When it's acting in this capacity, we call $\D_1 \boxtimes_\B \D_2$ the \emph{composition codimension-2 defect}. Treating this as a codimension-2 defect is a matter of useful bookkeeping, since, for example, the two disks of Figure \ref{fig:composition-junction} have equivalence skein categories.

In the following proof we will refer to objects of $\D_1\boxtimes_\B \D_2$ by a representative in $\D_1\boxtimes \D_2$.
\begin{proof}

Following Lemma \ref{lemma:unpacking-line-data}, we will endow $\D_1\boxtimes_\B\D_2$ with the structure of an $\A$-centered $(\D_1, \D_1\boxtimes_\B \D_2)$-bimodule, a $\B$-centered $(\D_1,\D_2)$-bimodule, and a $\C$-centered $(\D_1\boxtimes_\B \D_2,\D_2)$-bimodule.

The bimodule structures are given by the tensor product $F_{cp}$ on $\D_1\boxtimes_\B \D_2$ together with the functors
\begin{equation}
  \begin{aligned}
    F_1 : \D_1 \boxtimes \left( \D_1\boxtimes_\B \D_2\right) &\to \D_1\boxtimes_\B \D_2 &\hspace{2cm} F_2 : \left( \D_1\boxtimes_\B \D_2\right)\boxtimes \D_2 &\to \D_1\boxtimes_\B \D_2\\
    x_1 \boxtimes (y_1 \boxtimes y_2) & \mapsto (x_1\otimes y_1) \boxtimes y_2 &  (y_1\boxtimes y_2) \boxtimes z_2 &\mapsto y_1\boxtimes (y_2\otimes z_2).
  \end{aligned}
\end{equation}

Let $\triangleright_1$, $\triangleright_{cp}$, $\triangleleft_2$, and $\triangleleft_{cp}$ denote the following left and right actions on $D_1\boxtimes_\B D_2$: 
\begin{equation}\label{eq:centering-nat-trans}
  \begin{aligned}
    x_1\triangleright_1 \!(y_1 \boxtimes y_2) &= (x_1\otimes y_1) \boxtimes y_2,  &
    (x_1\boxtimes x_2)\triangleright_{cp} \!(y_1\boxtimes y_2) &= (x_1\boxtimes x_2)\otimes (y_1\boxtimes y_2)
    \\
    (y_1 \boxtimes y_2) \triangleleft_2 z_2 &= y_1 \boxtimes (y_2\otimes z_2), &
    (y_1\boxtimes y_2) \triangleleft_{cp} (z_1\boxtimes z_2) &= (y_1\boxtimes y_2) \otimes (z_1\boxtimes z_2)
\end{aligned}
\end{equation}
To define the balanced structures we must write down natural isomorphisms:
\begin{equation}
  \begin{aligned}\label{eq:the-balancings}
    \mu_{a,\ell} : &F_1(a\boxtimes \idty_B) \triangleright_1 \ell_1\boxtimes \ell_2 \to \ell_1\boxtimes \ell_2 \triangleleft_{cp} F_{cp}(a \boxtimes \idty_C)  &\hspace{1cm} \A-\text{balancing},\\
    \eta_{b,\ell} : &F_1(\idty_A\boxtimes b) \triangleright_1 \ell_1\boxtimes \ell_2 \to \ell_1\boxtimes \ell_2 \triangleleft_2 F_2(b\boxtimes \idty_C)  & \hspace{1cm} \B-\text{balancing},\\
    \nu_{c,\ell} : &F_{cp}(\idty_A \boxtimes c) \triangleright_{cp} \ell_1\boxtimes \ell_2 \to \ell_1\boxtimes \ell_2 \triangleleft_{2} F_{2}(\idty_B \boxtimes c) &\hspace{1cm} \C-\text{balancing}.
  \end{aligned}
\end{equation}
We start with $\mu$, the $\A$-balancing. Its source is
\begin{equation}
  F_1(a\boxtimes \idty_B)\triangleright_1 \ell_1\boxtimes \ell_2 = \left(F_1(a\boxtimes \idty_B) \otimes \ell_1\right)\boxtimes \ell_2.
\end{equation}
Next we use the fact that $F_{cp}(a\boxtimes c) = F_1(a\boxtimes \idty_B) \boxtimes F_2(\idty_B\boxtimes c)$ to rewrite the target as follows
\begin{equation}
  \begin{aligned}
    \ell_1\boxtimes \ell_2 \triangleleft_{cp} F_{cp}(a\boxtimes \idty_C)&= (\ell_1\boxtimes \ell_2) \otimes F_{cp}(a\boxtimes \idty_C) \\
                                                                        &\cong (\ell_1 \otimes F_1(a\boxtimes \idty_B)) \boxtimes (\ell_2 \otimes F_2(\idty_B\boxtimes\idty_C)) \\
                                                                        &\cong (\ell_1 \otimes F_1(a\boxtimes \idty_B) \boxtimes \ell_2
  \end{aligned}
\end{equation}
Recall that $F_1$ comes with a half-braiding $\sigma: F_1(-)\otimes - \Rightarrow -\otimes F_1(-)$ in $\D_1$.
This provides the required natural isomorphism $F_1(a\boxtimes \idty_C)\otimes \ell_1 \boxtimes \ell_2 \to \ell_1 \otimes F_1(a\boxtimes \idty_C)\boxtimes \ell_2$: 
\begin{equation}
  \mu_{a,\ell} = \sigma_{a\boxtimes\idty_C, \ell_1}\boxtimes \id_{\ell_2}.
\end{equation}
The $\C$-balancing is constructed analogously, from the half braiding $F_2(-)\otimes - \Rightarrow - \otimes F_2(-)$, so that $\nu_{c,\ell} = \id_{\ell_1}\boxtimes \sigma_{\idty_B\boxtimes c,\ell_2}$.
Unwrapping the middle line \eqref{eq:the-balancings}, we find that the necessary natural isomorphisms for the $\B$-balancing $\eta$ are given by
\begin{equation}
  \begin{aligned}
    F_1(\idty_A\boxtimes b) \otimes \ell_1 \boxtimes \ell_2 &\cong \ell_1 \otimes F_1(\idty_A \boxtimes b) \boxtimes \ell_2  & & \hspace{2cm}\text{half-braiding on } F_1\\ 
                                                            & \cong \ell_1 \boxtimes F_2(b\boxtimes \idty_C) \otimes \ell_2 && \hspace{2cm}\B\text{-balancing on } \D_1\boxtimes_\B\D_2 \\
                                                            &\cong \ell_1 \boxtimes \ell_2 \otimes F_2(b\boxtimes \idty_C) && \hspace{2cm} \text{half-braiding on } F_2
  \end{aligned}
\end{equation}

\end{proof}

\begin{lemma}\label{lemma:composing-locally}
  Let $\D_1$ be a defect from $\B$ to $\A$, and $\D_2$ from $\C$ to $\B$, let $\D_1\D_2 = \D_1\boxtimes_\B \D_2$ denote their composition.
  We have the following equivalences of categories
\begin{equation}\label{eq:tunnel-composition-cat}
  \includegraphics{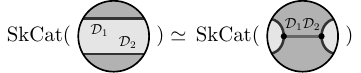}
\end{equation}
and
\begin{equation}\label{eq:bubble-composition-cat}
  \includegraphics{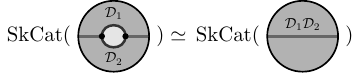},
\end{equation}
where all codimension-2 defects are the composition junction.
\end{lemma}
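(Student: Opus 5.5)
We need to prove Lemma \ref{lemma:composing-locally}: two equivalences of skein categories whose pictures we cannot see. From the surrounding text, \eqref{eq:tunnel-composition-cat} presumably identifies a disk in which $\D_1$ and $\D_2$ run parallel (an $\A$, $\B$, $\C$ strip) with the disk in which, in the middle, they fuse along two composition junctions into a single $\D_1\D_2$ wall (a ``tunnel''). Likewise \eqref{eq:bubble-composition-cat} identifies a disk with a single $\D_1\D_2$ wall with the disk where that wall splits into a $\B$-coloured bubble bounded by $\D_1$ and $\D_2$. In both cases the plan is to compute both sides by excision, reducing to the algebraic statement that the composition junction of Lemma \ref{lemma:composition-defect} acts as the unit 2-morphism for the identification of $\D_1\boxtimes_\B\D_2$ with the composite.

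First, I will cut each disk along vertical arcs transverse to the defects, into pieces each containing at most one codimension-2 point. By excision, each skein category is a relative tensor product of the skein categories of the pieces. The pieces without junctions are handled by \cite{Brown_Jordan_2025}: the parallel strip has skein category $\D_1\boxtimes_\B\D_2$, since its $\B$-region contributes a $\B$-balanced tensor product, while the $\D_1\D_2$ strip has skein category $\D_1\D_2$ by the codimension-1 version of the analogue of Lemma \ref{lemma:skein_cat_of_line_defect_is_L}. For a piece containing a single composition junction, Lemma \ref{lemma:skein_cat_of_line_defect_is_L} gives $\SkCat\cong\L=\D_1\boxtimes_\B\D_2$. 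This is an equivalence of module categories over the boundary annulus category, which via Lemma \ref{lemma:unpacking-line-data} acts through the three centered bimodule structures built in Lemma \ref{lemma:composition-defect}.

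Second, I will assemble the pieces. For the tunnel, gluing gives
\[(\D_1\boxtimes_\B\D_2)\boxtimes_{\D_1\D_2}(\D_1\D_2)\boxtimes_{\D_1\D_2}(\D_1\boxtimes_\B\D_2).\]
Because the junction bimodule is the regular bimodule over $\D_1\D_2$, its actions $\triangleright_{cp},\triangleleft_{cp}$ being the tensor product and $F_1,F_2$ acting by multiplication in each factor, the standard unit isomorphism $\M\boxtimes_{\mathcal R}\mathcal R\simeq\M$ collapses this to $\D_1\boxtimes_\B\D_2$. That is the left-hand side. For the bubble, the glued category is $\D_1\D_2\boxtimes_{\D_1\D_2}\bigl(\D_1\boxtimes_\B\D_2\bigr)\boxtimes_{\D_1\D_2}\D_1\D_2$, which is again $\D_1\D_2$. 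Alternatively, I can use the monadic/internal-algebra description, checking that the induced functor sends the distinguished object to the distinguished object and induces an isomorphism on the corresponding endomorphism algebras.

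The main obstacle is checking that these equivalences are compatible with all the balancing data, not just with the underlying bimodule structures. For the tunnel, the $\B$-balancing $\eta$ of the junction must match the $\B$-balancing of the parallel strip. This holds by construction, since $\eta$ was defined as a composite of half-braidings with the $\B$-balancing of $\D_1\boxtimes_\B\D_2$. The $\A$- and $\C$-balancings $\mu,\nu$ are pure half-braidings, so they agree with how bulk strands cross $\D_1$ and $\D_2$. I would verify these compatibilities diagrammatically using Lemmas \ref{lemma:bulk-and-planar-isotopy-inv1} and \ref{lemma:bulk-and-planar-isotopy-inv2}, and then conclude with a fullness and faithfulness argument through $\RT^n_\L$ from Theorem \ref{thm:skein-evaluation}.
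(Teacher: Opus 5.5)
Your proposal is correct and follows the paper's route. The paper's proof is a one-line appeal to excision together with Lemma~\ref{lemma:skein_cat_of_line_defect_is_L}, which identifies each junction disk's skein category with $\D_1\boxtimes_\B\D_2$; your cut-and-glue computation is that argument written out, and your checks of the balancing data go further than the paper does.
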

\begin{proof}
This follows from Lemma \ref{lemma:skein_cat_of_line_defect_is_L} and the excision property of skein categories.
\end{proof}

\begin{corollary}
\label{corollary:composing-locally-algebras}
  Let $\D_1$ be a defect from $\B$ to $\A$, and $\D_2$ from $\C$ to $\B$.
  We have the following equivalences as algebra objects in $(\A,\C)$-bimodules
\begin{equation}\label{eq:tunnel-composition}
  \includegraphics{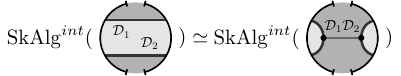}
\end{equation}
  and
\begin{equation}\label{eq:bubble-composition}
  \includegraphics{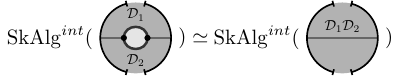}.
\end{equation}
\end{corollary}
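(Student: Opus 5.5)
We deduce the Corollary from Lemma \ref{lemma:composing-locally} by passing to internal endomorphism algebras, so that the internal skein algebras are read off from the skein categories. The internal skein algebra of a decorated surface with gates $\G$ is $\widehat{\P}^R\widehat{\P}(\widehat{\1})$, where $\P : \mathcal{S}_\G \to \SkCat(\Sigma)$ is disk insertion. Here the gates on the two sides of each picture are the same boundary intervals, meeting the $\A$-region and the $\C$-region, so $\mathcal{S}_\G$ is identified with the appropriate product of $\A$ and $\C^{\bop}$. In this setting $(\A,\C)$-bimodule algebra objects are algebras in the free co-completion of $\A\boxtimes\C^{\bop}$, as in Lemma \ref{lemma:d1-is-commutative}.

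The key step is to upgrade the equivalences \eqref{eq:tunnel-composition-cat} and \eqref{eq:bubble-composition-cat} to equivalences of $\mathcal{S}_\G$-module categories that send the empty labelling to the empty labelling. Both equivalences come from excision applied to a collar neighbourhood of the junction region, and Lemma \ref{lemma:skein_cat_of_line_defect_is_L} identifies each side locally with $\D_1\boxtimes_\B\D_2$. The gate collars $\G\times[0,1]$ can be chosen disjoint from the region where the excision takes place. The resulting equivalence is therefore induced by gluing along a region away from the gates, so it commutes strictly with the action $\G\times[0,1]\sqcup\Sigma\hookrightarrow\Sigma$. For the same reason it sends the empty skein to the empty skein, since the distinguished objects of all the $\L$'s are the monoidal units. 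We then intertwine the disk insertion functors: $\Phi\circ\P \cong \P'$ as $\mathcal{S}_\G$-module functors.

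Given this, the rest is formal. An equivalence $\Phi$ of module categories with $\Phi\P\cong\P'$ induces, after free co-completion, an isomorphism of monads $\widehat{\P}^R\widehat{\P}\cong \widehat{\P'}^R\widehat{\P'}$, because $\widehat{\P'}^R \cong \widehat{\P}^R\widehat{\Phi}^{-1}$. Evaluating at $\widehat{\1}$ gives an isomorphism of internal endomorphism algebras, compatible with multiplication because the multiplication comes from the monad structure. Alternatively, one can argue concretely with the coend formula \eqref{eq:its-a-coend}. In both pictures the coend is over the same $\mathcal{S}_\G$, and the maps $\Sk(\Sigma\times[0,1];\P(X))\to\Sk(\Sigma'\times[0,1];\P'(X))$ induced by $\Phi$ are natural in $X$ and multiplicative under stacking.

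The main obstacle is the first step. Lemma \ref{lemma:composing-locally} is stated only as an equivalence of categories, so we need to check that the excision identifications really are compatible with the gate actions. In particular, the bimodule structure must match the one on $\D_1\boxtimes_\B\D_2$ obtained from the half-braidings, namely the balancings $\mu$ and $\nu$ constructed in Lemma \ref{lemma:composition-defect}. We would verify this by tracing an $\A$-labelled strand from a gate through the junction, and using that the $\A$-balancing of the composition junction is built from $\sigma$ on $F_1$, which is exactly how such a strand is evaluated in Theorem \ref{thm:skein-evaluation}. The $\C$-side is handled in the same way with $\nu$.
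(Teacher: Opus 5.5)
Your proposal is correct and follows essentially the route the paper intends: the paper gives no separate proof and treats the corollary as an immediate consequence of Lemma \ref{lemma:composing-locally}. The only step it leaves implicit is the one you isolate, namely that the excision equivalences are compatible with the gate actions and fix the empty labelling, so that the internal endomorphism algebras $\widehat{\P}^R\widehat{\P}(\widehat{\1})$ (equivalently, the coends \eqref{eq:its-a-coend}) agree as algebras.
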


\section{Weyl and parabolic defects}\label{sec:weyl-and-parabolic-defects}

In this section we will define Weyl defects and establish some of their salient properties.
A major focus will be the relationship between Weyl and parabolic defects, culminating in Theorem \ref{thm:borel-almost-invertible}, which exhibits a \emph{``generic invertibility''} of parabolic defects by showing that localising their compositions produces the invertible Weyl defect.
The word generic here alludes to the fact that localising a coordinate corresponds to restricting to a dense subset of the underlying moduli space. 

Throughout this section we will depict $G$-regions in light blue and $T$-regions in a darker orange. The two types of regions will always be separated by a parabolic defect $\D_B$ or its affinisation $\widetilde{D_B}$, here we do not distinguish between duals. We will only consider Weyl defects $\T_{w_0}$ in the $T$-region, and we depict these in green.

 \begin{definition}
    Recall that the Weyl group $W$ of a reductive group $G$ acts by conjugation on the maximal torus $T$, which in turn produces a braided auto-equivalence of $\Rep_qT$. Let $\chi_\lambda \mapsto \chi_{w\cdot\lambda}$ denote this equivalence on weight representations.
    The \emph{\bf Weyl defect} $\T_w$ is the codimension-1 endo-defect of $\Rep_qT$ determined by the pivotal braided tensor functor
\begin{equation}
    \begin{aligned}
        \Rep_qT \boxtimes \Rep_qT^{\bop} &\to \Zdr(\Rep_qT) \\
        \chi_\alpha\boxtimes \chi_\beta &\mapsto \left(\chi_\alpha \otimes\chi_{w\cdot \beta}, (\sigma_{\chi_\alpha,-}\otimes\id)\circ(\id \otimes \sigma^{-1}_{-,\chi_{w\cdot\beta}}) \right).
    \end{aligned}
\end{equation}
 \end{definition}
 
A skein passing through a Weyl defect must be labelled by an equivariant morphism, this heavily restricts what coupons can live on the defect:
\begin{lemma}\label{lemma:weyl-defect-weights}
  Let $\chi_\alpha, \chi_\beta$ be the highest weight representations of $T$ associated to characters $\alpha, \beta$, and let $X_{\alpha}$ (resp. $X_\beta$) be the $\Rep_qT$-labellings of $\Rib_{\T_w}$ with a single positively oriented point labelled $\chi_\alpha$ (resp. $\chi_\beta$) on the side of the inwards (resp. outwards) pointing normal of the defect, as shown in Figure \ref{fig:weyl-defect}.
  Then 
  \begin{equation}
    \Hom_{\SkCat_{\T_w}(\DDsurf)}(X_\alpha,X_\beta) = \begin{cases}
      \K, & \text{ if } \beta = w\cdot \alpha \\
      0, &\text{ otherwise.}
    \end{cases}
  \end{equation}
\end{lemma}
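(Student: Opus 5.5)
The plan is to reduce the hom-space computation to a computation in the category $\Rep_qT$ itself, using the excision description of the skein category of the defect disk $\DDsurf$. By the codimension-1 analogue of Lemma \ref{lemma:skein_cat_of_line_defect_is_L} (i.e. the result of \cite{Brown_Jordan_2025} that $\SkCat(\DDsurf)$ with defect $\D$ is equivalent to $\D$, via the evaluation functor $\RT_\D$), $\SkCat_{\T_w}(\DDsurf)\simeq \T_w$. Here $\T_w$ has underlying category $\Rep_qT$, and bulk labels are pushed into the defect through the central functor $H:\Rep_qT\boxtimes\Rep_qT^{\bop}\to\Zdr(\Rep_qT)$. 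Under this equivalence, a single positively oriented point labelled $\chi_\alpha$ on the inward (source) side corresponds to $H(\idty\boxtimes\chi_\alpha)=\chi_{w\cdot\alpha}$. A point labelled $\chi_\beta$ on the outward (target) side corresponds to $H(\chi_\beta\boxtimes\idty)=\chi_\beta$. I should double check which tensor factor is the source side against the convention in Section \ref{sec:surface_defects}; if it is the other one, the roles swap and one obtains $\alpha=w\cdot\beta$, equivalently $\beta=w^{-1}\cdot\alpha$. I will fix the convention so that it matches the statement.

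Concretely, the key steps are as follows.

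First, I will show that every skein from $X_\alpha$ to $X_\beta$ in $\DDsurf\times[0,1]$ is skein equivalent to a single strand. This strand starts at $X_\alpha$, meets the defect wall in one coupon, and leaves to $X_\beta$. To get there, I retract onto the wall as in Step 1 of Theorem \ref{thm:skein-evaluation}, equivalently as in the evaluation $\RT_{\T_w}$. This expresses any skein as the image of a morphism $\chi_{w\cdot\alpha}\to\chi_\beta$ in $\Rep_qT$.

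Second, I will use fullness and faithfulness of the induced functor $\SkCat_{\T_w}(\DDsurf)\to\T_w$ to identify
\begin{equation*}
\Hom(X_\alpha,X_\beta)\cong\Hom_{\Rep_qT}(\chi_{w\cdot\alpha},\chi_\beta).
\end{equation*}
Fullness comes from allowing arbitrary coupons on the defect, and faithfulness holds because skein relations are exactly the kernel of $\RT$.

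Third, I will invoke Schur's lemma in the semisimple category $\Rep_qT$, whose simple objects are the one-dimensional characters $\chi_\lambda$. Distinct characters have no nonzero maps between them, and $\operatorname{End}(\chi_\lambda)=\K$. This gives $\K$ if $\beta=w\cdot\alpha$ and $0$ otherwise.

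I expect the main obstacle to be the first two steps taken together, namely justifying that the hom-space in the defect skein category really equals the hom-space in $\T_w$ between the images of the labellings. This requires citing the equivalence $\SkCat(\DD_1)\simeq\D$ from \cite{Brown_Jordan_2025} and checking carefully, with the orientation and normal conventions, that the bulk point on each side is sent to $\chi_\beta$ and $\chi_{w\cdot\alpha}$ respectively. The twisted half-braiding plays no role here, since it only affects composites involving crossings.
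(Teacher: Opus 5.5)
Your proposal is correct and is essentially the ``direct computation'' the paper alludes to without giving details: evaluating via $\RT_{\T_w}$ identifies the hom-space with $\Hom_{\Rep_qT}(\chi_{w\cdot\alpha},\chi_\beta)$, and Schur's lemma finishes it, which is the same evaluate-then-Schur pattern the paper uses in Lemma~\ref{lemma: skein passing through point defect}. Your first guess at the convention is the right one, so no swap is needed: Section~\ref{sec:surface_defects} takes the first factor of $\A\boxtimes\C^{\bop}$ to be the target region, into which the outward normal points, hence $X_\alpha\mapsto\chi_{w\cdot\alpha}$ and $X_\beta\mapsto\chi_\beta$.
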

This proof follows from a direct computation. In light of this, we can think of the Weyl defect as acting to change the labelling representation of skeins from $\chi_{\lambda}$ to $\chi_{w\cdot \lambda}$, see Figure \ref{fig:weyl-defect}.
\begin{figure}
  \centering
  \includegraphics[scale=.8]{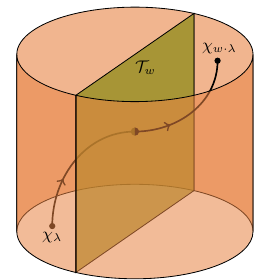}\hspace{20mm}
  \raisebox{10mm}{\includegraphics{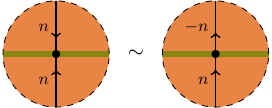}}
  \caption{\textbf{Left}: The de-facto effect of the Weyl defect. \textbf{Right}: When $G=\SL_2\CC$ and $w = (12) \in S_2 $, the effect can be understood a flipping the orientation of the skein or, equivalently, multiplying the weight by $-1$.}\label{fig:weyl-defect}
\end{figure}

\begin{example}[Kauffman Bracket]
Let $G = \SL_2\CC$, so that the Weyl group is $S_2 = \langle e, (12)\rangle$, the weight lattice is $\ZZ$, and the action is $(12)\cdot \lambda = - \lambda$.  

The practical effect of the defect $\T_{(12)}$ is to implement this action, sending a weight $n$ $\CC^*$-skein to a weight $-n$ one. Since $\chi_n^* \cong \chi_{-n}$, we can equivalently interpret this as changing the orientation of the skein, see Figure \ref{fig:weyl-defect}.
\end{example}

\subsection{The junction between Weyl and parabolic defects}\label{sec:weyl-parabolic-junction}

 Fix a Borel subgroup $B\subset G$ with corresponding Cartan $T:= B/[B,B]$.
 We will now construct a three-way junction between two parabolic defects associated to $B$ and a Weyl defect. 
 In future works, such a junction will allow us to move between different parabolic defects. For the time being we use it to articulate the generic invertibility of the parabolic defect, see Theorem \ref{thm:borel-almost-invertible}.
 
 First, let $U_q(\mathfrak{g})$ be the Drinfeld-Jimbo quantum group associated to $G$, with Serre-Chevalley generators $E_i,F_i$ and $K_i$. We define $U_q(\mathfrak{b})$ and $U_q(\mathfrak{t})$ as the subalgebras generated by all $E_i, K_i$ and all $K_i$, respectively.
 The canonical inclusion and projection maps $i: U_q(\mathfrak{b}) \to U_q(\mathfrak{g})$  and $\pi: U_q(\mathfrak{b}) \to U_q(\mathfrak{t})$ induce pullback functors $i^* \colon \Rep_q{G} \to \Rep_q{B}$ and $\pi^* \colon \Rep_q{T} \to \Rep_q{B}$, which we use to define a braided tensor functor
 \begin{equation}\label{eq:borel-defect}
    \begin{aligned} 
       \Rep_q{G} \boxtimes \Rep_q{T}^{\bop} & \rightarrow \Zdr(\Rep_q{B}) \\
       V \boxtimes \chi & \mapsto (i^\ast V \otimes \pi^\ast \chi , c_{(V , \chi) , -}).
    \end{aligned}
 \end{equation}
The half braiding is given by
    \begin{equation}\label{eq:half_braiding}
      c_{(V , \chi) , U}  \colon (i^\ast V \otimes \pi^\ast \chi) \otimes U \xrightarrow{\tau_{(1 2 3)} \circ (\mathcal{R}^G)_{1 3} \circ (\mathcal{R}^T)^{-1}_{3 2}} U \otimes (i^\ast V \otimes \pi^\ast \chi),
    \end{equation}
    where $\mathcal{R}^G, \mathcal{R}^T$ are the R-matrices of $\Rep_qG$ and $\Rep_qT$, respectively.
    From \eqref{eq:borel-defect} we can construct the \emph{\bf parabolic defect} $\D_B$.
    As discussed in \cite{Brown_Jordan_2025}, the category $\Rep_qB$ is not well behaved from a skein-theoretic viewpoint, e.g. it has no compact projectives.
    It's best replaced by its \emph{\bf affinisation} $\widetilde{\Rep_qB} := \SkAlg^{int}(\DD_1)-\mod_{G\times \overline{T}}$.
    In \cite{JLSS2021}, the authors prove the existence of a reflective embedding
    \begin{equation}\label{eq:B-affinisation}
        \textrm{Aff}: \Rep_qB \hookrightarrow\widetilde{\Rep_qB}.
    \end{equation}
    We use the shorthand $\widetilde{\D}_B := \widetilde{\Rep_qB}$ and call this the \emph{\bf redecorated parabolic defect}.

We will need to understand the dual of the parabolic defects. 
Dual codimension-1 defects are given by the opposite monoidal category, for parabolic defects these have a concrete description in terms of opposite parabolics. 
Namely, exchanging $E_i \leftrightarrow F_i$ and inverting $K_i^{\pm 1 } \leftrightarrow K_i^{\mp 1}$ defines an algebra and anti-coalgebra isomorphism $\phi: U_q(\mathfrak{b}_\minus)\cong U_q(\mathfrak{b})$, which induces a monoidal equivalence
\begin{align}\label{eq:exchange-borels}
    \Phi := \phi^* :\Rep_q{B}^{\mop} \to \Rep_q{B_{\minus}}.
\end{align}
In particular $\Phi( \pi^*(\chi_\lambda)) = \pi_{\minus}^*(\chi_{-\lambda})$ and $\Phi(i^*(V_\lambda)) = i_{\minus}^*(V_{-w_0\lambda})$, where $i_{\minus}$ and $\pi_{\minus}$ denote the inclusion and projection of $U_q(\mathfrak{b}_{\minus})$ into $U_q(\mathfrak{g})$ and onto $U_q(\mathfrak{t})$, respectively.
The dual defect $\D_B^\vee$ can be written in terms of this equivalence
    \begin{equation}\label{eq:borel-dual}
         \begin{aligned}
       \Rep_q{T} \boxtimes \Rep_q{G}^{\bop} & \rightarrow \Zdr(\Rep_q{B_{\minus}}) \\
    \chi_{\lambda} \boxtimes V_{\mu} & \mapsto (i_{\minus}^\ast V_{-w_0\mu} \otimes \pi^\ast_{\minus} \chi_{-\lambda} , \Phi(c^{-1}_{(V_{\mu} , \chi_{\lambda}) , \Phi^{-1}(\,\cdot\,)})).
    \end{aligned}
    \end{equation}
    
We can now construct a junction between $\T_{w_0}$ and two copies of $\widetilde{\D_B}$, with underlying linear category $\L:=\Rep_q T$.
We describe the centered bimodule structures first on $\D_B$, then show they lift naturally to $\widetilde{\D_B}$.
Throughout, we will use that $\L$ happens to be monoidal, so it's enough to define monoidal functors $\T_{w_0} \to \L$ and $\D_B \to \L$.
The following algebra homomorphisms endow $\L$ with left $\Rep_qB$ and $\Rep_q{B_{-}}$ actions
\begin{equation}
    \begin{aligned}
  \iota : U_q{\mathfrak{t}} &\to U_q{\mathfrak{b}},  &&\text{and}&  j: U_q{\mathfrak{t}} &\to U_q{\mathfrak{b}_{-}}\\
             K_i &\mapsto K_i                         &&&                   K_i  &\mapsto K_{w_0(i)}^{-1}
    \end{aligned}
\end{equation}
 while the monoidal structure on $\Rep_qT$ itself induces a left action $\T_{w_0} \times \L \to \L$.
The corresponding right module structures on the dual defects are induced by the equivalences $\Phi: \Rep_q{B}^{\mop} \cong \Rep_q{B_{-}}$:
\begin{align}
  \iota^*\circ \Phi^{-1}: \Rep_q{B_{-}} &\to \Rep_q{T}, & j^*\circ\Phi: \Rep_q{B} &\to \Rep_q{T}.
\end{align}
In each case let $\blacktriangleleft$ denote the induced right action on the dual defect.
Here we've used the fact that $\T_w^\vee \cong\T_{w}^{-1} \cong \T_{w^{-1}}$, meaning that $\T_{w_0}$ is self-dual.

Each pair of codimension-1 defects now gives a bimodule structure on $\L$, see Figure \ref{fig:weyl-junction-bimodules}.
\begin{figure}
  \centering
  \includegraphics{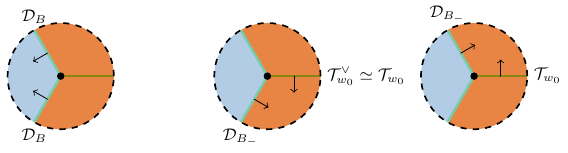}
    \caption{The three centered bimodule structures on $\L$ described in \eqref{eq:G-centered-BB-bimodule}, \eqref{eq:T-centered-BT-bimodule}, and \eqref{eq:T-centered-TB-bimodule}. The codimension-2 defect is oriented out of the page.}\label{fig:weyl-junction-bimodules}
\end{figure}
 It follows from direct computation that these bimodules are centered over the appropriate ribbon category:
\begin{description}
   \item[$\Rep_qG$-centered $(\D_{B},\D_B)$-bimodule:]
    \begin{equation}\label{eq:G-centered-BB-bimodule}
        \chi_\nu \blacktriangleleft i^*(V_\lambda)= j^*\Phi( i^*(V_\lambda)) \otimes \chi_\nu \cong \iota^*i^*(V_\lambda) \otimes \chi_\nu = i^*V_\lambda \triangleright \chi_\nu
    \end{equation}
    \item[$\Rep_qT$-centered $(\D_{B_{\minus}},\T_{w_0})$-bimodule:]
    \begin{equation}\label{eq:T-centered-BT-bimodule}
        \chi_\nu\blacktriangleleft \chi_{w_0\lambda}=\chi_{w_0\lambda} \otimes \chi_\nu  \cong j^*(\pi_{-}^*\chi_{-\lambda})\otimes \chi_\nu=(\pi_{-}^*\chi_{-\lambda})\triangleright \chi_\nu=\Phi(\pi^*\chi_\lambda)\triangleright \chi_\nu.
    \end{equation}
    \item[$\Rep_qT$-centered $(\T_{w_0}, \D_{B_{\minus}})$-bimodule:]
    \begin{equation}\label{eq:T-centered-TB-bimodule}
        \chi_\nu\blacktriangleleft \Phi\pi^*(\chi_\lambda)=\iota^*\Phi^{-1}(\Phi\pi^*(\chi_\lambda)) \otimes \chi_\nu \cong \chi_\lambda \otimes \chi_\nu  = \chi_\lambda\triangleright \chi_\nu
    \end{equation}
\end{description}
Here $\chi_\nu$ denote arbitrary weight representations in $\L$, whereas $\chi_\lambda$ and $V_\lambda$ denote arbitrary representations from the bulk regions. Pullback along the left adjoint $\mathrm{Aff}^L: \widetilde{\Rep_qB} \to \Rep_qB$ of \eqref{eq:B-affinisation} endows $\L$ with the appropriate $\widetilde{\Rep_qB}$ actions. 
Because $\mathrm{Aff}$ is fully faithful, we have that $\mathrm{Aff}^L\mathrm{Aff} \cong \id$. This implies that the induced actions remain centered.

We will see that this junction behaves similarly to the composition junction of Section \ref{sec:composition-junction}:
\begin{lemma}\label{lemma: skein passing through point defect}
Skeins can pass through the Weyl codimension-2 defect. That is, the following skeins are equal up to a scalar $\alpha$:
\begin{equation}
\begin{aligned}
        \includegraphics{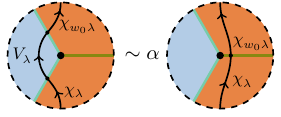},
\end{aligned}
\end{equation}
where $\lambda$ is a dominant weight for $G$.
\end{lemma}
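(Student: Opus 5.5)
Since the figure is not visible, I read the statement as follows. On one side, a $G$-skein coloured by $V_\lambda$ runs in the $G$-region and passes through the junction where the two $\widetilde{\D_B}$ walls meet the Weyl wall $\T_{w_0}$. On the other side, the skein is pushed onto the junction line, so that it crosses the two parabolic walls and the Weyl wall only through coupons. The claim is that the two sides agree up to a scalar $\alpha$.

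The plan is to compute both sides with the evaluation functor $\RT^3_\L$ of Theorem \ref{thm:skein-evaluation}, with $\L=\Rep_qT$ and distinguished object $\chi_0$. Both skeins are morphisms in $\Rib^3_\L$ with the same boundary labelling, so their evaluations lie in a single hom-space of $\L$. By Lemma \ref{lemma:weyl-defect-weights} and semisimplicity of $\Rep_qT$, this hom-space is $\Hom_{\Rep_qT}(\chi_\mu,\chi_{\mu'})$ for some weights $\mu,\mu'$, so it is either $0$ or one-dimensional. The lemma then follows once I show the weights match and the relevant evaluation is nonzero; the scalar $\alpha$ is the ratio of the two evaluations.

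First I run the three steps of Theorem \ref{thm:skein-evaluation} on the skein that passes through the junction. I retract onto the union of the walls. The strand crosses the codimension-2 defect once, which produces a single $\eta$-vertex for the $\Rep_qG$-centered $(\D_B,\D_B)$-bimodule structure \eqref{eq:G-centered-BB-bimodule}. By that identification, $i^*V_\lambda$ acts on $\L$ through $\iota^*i^*V_\lambda$ from one side and through $j^*\Phi(i^*V_\lambda)$ from the other. Both restrict to the same $T$-character of $V_\lambda$, so the balancing $\eta$ is essentially the identity twisted by $w_0$, and the evaluation is an explicit morphism.

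Next I evaluate the other side. There the $G$-strand enters each $\widetilde{\D_B}$ wall and ends at coupons. Through $\mathrm{Aff}^L$, the action of $\D_B$ on $\L$ factors through $\iota^*$, which sends $V_\lambda$ to $\bigoplus\chi_\mu$. A coupon on the $B$-wall that maps to a one-dimensional $T$-weight space is therefore, after evaluation, the projection onto an extremal weight. The natural choice is the highest weight $\lambda$ on one wall and, on the opposite wall, via $\Phi$ and $j$, the lowest weight $w_0\lambda$, which $\T_{w_0}$ identifies correctly. Composing these projections with the inclusion gives a nonzero element of the same one-dimensional hom-space, so the two evaluations are proportional. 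Here $\alpha$ is the pairing of the highest-weight vector with its dual lowest-weight vector, possibly with a ribbon twist factor such as $q^{(\lambda,\lambda+2\rho)}$ coming from the framing.

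The main obstacle is checking that the $\blacktriangleleft$ and $\triangleright$ actions really agree with the chosen extremal weight vectors under $\Phi(i^*V_\lambda)=i_-^*V_{-w_0\lambda}$ and $j(K_i)=K_{w_0(i)}^{-1}$. In particular, I need the image of the highest-weight line in $i^*V_\lambda$ to land on the lowest-weight line after $j^*\Phi$, so that the weights agree and $\alpha\neq0$. I would settle this by a direct weight computation, first for $\SL_2$ as a sanity check and then in general using $\Phi$'s exchange $E_i\leftrightarrow F_i$.
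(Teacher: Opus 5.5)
Your approach is the same as the paper's: the paper's proof says only that both skeins evaluate, under the codimension-2 evaluation functor with $\L=\Rep_qT$, to nonzero morphisms in $\Hom_\L(\chi_{w_0\lambda},\chi_{w_0\lambda})$, which is one-dimensional by Schur's lemma, so the two skeins agree up to a scalar $\alpha$. The paper asserts the nonvanishing without computing it, so your planned extremal-weight check (highest-weight line on one parabolic wall matching the lowest-weight line $w_0\lambda$ across $\T_{w_0}$) only fills in detail the paper leaves implicit.
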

\begin{proof}
Both skeins evaluate to a non-zero morphism in $\Hom_\L(\chi_{w_0\lambda}, \chi_{w_0\lambda})$, which, by Schur's lemma, is one-dimensional.
\end{proof}

\subsection{Generic invertibility of parabolic defects}\label{sec:invertible-statement}

We can now state our main result about Weyl and parabolic defects, namely that composing dual (affinised) parabolic defects we arrive at a $\Rep_qT$ endo-defect which is closely related to the Weyl defect. The proof will be deferred to Section \ref{sec:invertible-proof}, in the meanwhile we establish some necessary prerequisites. 

In what follows let $\PP_n$ denote a decorated disk with $n$ distinct $T$-regions which contract to points on the boundary, all separated by $\widetilde{\D_B}$ from a single contractible $G$-region. We make frequent implicit use of the fact that the associated internal skein algebra is the same whether the defects are labelled by $\D_B$ or by $\widetilde{\D_B}$, even though the skein categories are different.
We will call $\PP_2$ the \emph{\bf digon}. 

Suppose that $\PP_2$ is equipped with one gate in each region.  It was shown in \cite{JLSS2021} that 
\begin{equation}\label{eq:digon-skein-isomorphism}
    \SkAlg^{int}_{G\times T\times T^{bop}}(\mathbb{P}_2) \cong O_q(G^+/N)^{\otimes 2}
\end{equation}
as algebra objects in $\Rep_qG \boxtimes \Rep_qT \boxtimes \Rep_qT^{bop}$. 
Let $A_\lambda \in \SkAlg^{int}_{G\times T\times T^{bop}}(\mathbb{P}_2)$ denote the $G$-invariant skein which connects the two $T$-gates, oriented in the $T$-regions towards the defects, labelled by $\chi_\lambda$ in one $T$-region, and therefore necessarily by $\chi_{-w_0\lambda}$ in the other. See the discussion surrounding \eqref{eq:digon-coordinate} for a description of $A_\lambda$ in $O_q(G^+/N)^{\otimes 2}$.

\begin{theorem}\label{thm:borel-almost-invertible}
The parabolic defect enjoys generic invertibility. That is, there exists a categorical equivalence
     \begin{align}\label{eq:borel-almost-invertible}
         \widehat{\SkCat}(\Digon)[A_\rho^{-1}] \cong \widehat{\SkCat}( \FullWeylCat )
     \end{align}
    where $[A^{-1}_\rho]$ refers to the subcategory where the skein $A_\rho \in \SkAlg^{int}(\PP_2)$ labelled by $\idty\boxtimes\chi_\rho\boxtimes \chi_{-w_0\rho} \in \Rep_qG\boxtimes \Rep_qT\boxtimes\Rep_qT^{bop}$ acts invertibly. Recall moreover that $ \;\widehat{\cdot}$ denotes the free co-completion.
\end{theorem}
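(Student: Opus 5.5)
The plan is to reduce the categorical statement to an isomorphism of internal skein algebras via monadic reconstruction, and then check that algebra isomorphism by hand. Both sides of \eqref{eq:borel-almost-invertible} are skein categories of decorated disks with gates. Once gates are placed in each boundary region, the disk insertion functor $\widehat{\P}$ from the gate category is essentially surjective up to colimits. The Barr--Beck argument of \cite{BBJ2018Integrating,GJS2023,Brown_Jordan_2025} then identifies each freely co-completed skein category with modules over its internal skein algebra in $\widehat{\mathcal{S}}_\G$. On the right-hand side $\FullWeylCat$, the $G$-regions touch only the two parabolic junctions. I will therefore aim to show that, after localisation, the $G$-gates can be dropped. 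This is the role of the $G$-invariants result: taking $G$-invariants becomes an equivalence in the localised co-completed skein categories. Granting it, both sides become categories of modules in $\widehat{\Rep_qT\boxtimes\Rep_qT^{bop}}$, and it suffices to produce a $T\times T^{bop}$-equivariant algebra isomorphism
\begin{equation*}
\SkAlg^{int}(\DigonAlg)[A_\rho^{-1}]\cong \SkAlg^{int}(\FullWeylAlg).
\end{equation*}
One must also check that localising the algebra at $A_\rho$ corresponds to the full subcategory where $A_\rho$ acts invertibly. This should follow because $A_\rho$ is central, being $G$-invariant and supported across the digon. Centrality then makes $[A_\rho^{-1}]$ a reflective localisation compatible with the monad.

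For the algebra computation, I will use \eqref{eq:digon-skein-isomorphism} on the left. After taking $G$-invariants this describes $\SkAlg^{int}(\PP_2)$ through the pairing of the two copies of $O_q(G^+/N)$. Concretely, it is spanned by skeins $A_\lambda$ of weight $\chi_\lambda\boxtimes\chi_{-w_0\lambda}$, for $\lambda$ dominant, together with their $T$-equivariant refinements. On the right, I will compute $\SkAlg^{int}(\FullWeylAlg)$ by excision. Using Lemma \ref{lemma:composing-locally} and the junction structure of Section \ref{sec:weyl-parabolic-junction}, it reduces to a $T$-region crossed by $\T_{w_0}$. By Lemma \ref{lemma:weyl-defect-weights}, this is the group algebra of the weight lattice, $\bigoplus_{\lambda\in\Lambda}\chi_\lambda\boxtimes\chi_{-w_0\lambda}$, with every summand invertible. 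The map between the two sides sends $A_\lambda$ to the strand labelled $\chi_\lambda$ passing through the Weyl defect. Lemma \ref{lemma: skein passing through point defect} shows this strand passes through the codimension-2 junctions up to nonzero scalars, which I will normalise. Multiplicativity $A_\lambda A_\mu=q^{(\cdot)}A_{\lambda+\mu}$ on the left should match the corresponding relation on the right. Inverting $A_\rho$ then inverts all $A_\lambda$, because every dominant $\lambda$ satisfies $\lambda\le n\rho$ in a sense that makes $A_\rho^{n}A_\lambda^{-1}$ a product of $A$'s. This yields the full lattice algebra.

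The main obstacle is injectivity and surjectivity of the localised map. The $G$-invariant part of $O_q(G^+/N)^{\otimes 2}$ involves more than the $A_\lambda$, because the $T$-equivariant structure allows non-extremal weight vectors. So I must show these extra elements either vanish in the $T\boxtimes T^{bop}$ component or become expressible via $A_\lambda$ after inverting $A_\rho$. My first attempt will be a PBW/Bruhat-cell argument. Localising at the principal minors $A_\rho$ restricts to the open cell $B_-w_0B$ (the big cell). On this cell the $G$-gates can be eliminated, and the $G$-invariants of the double flag coordinate ring become functions on $T$. I will make this precise by comparing Hilbert series graded by $\Lambda\times\Lambda$ before and after localisation.
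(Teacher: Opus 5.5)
Your reduction of the digon side and your final comparison of $T\times T^{bop}$ internal skein algebras agree with the paper. There is, however, a genuine gap on the Weyl side.

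\textbf{The Weyl side.} You write ``Granting it, both sides become categories of modules in $\widehat{\Rep_qT\boxtimes\Rep_qT^{bop}}$.'' Proposition \ref{prop:G-invariants-equivalence} does not grant this for $\FullWeylCat$.

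\begin{itemize}
\item The proposition is only about modules over $O_q(G^+/N)^{\otimes 2}[A_\rho^{-1}]$. Its proof hinges on $O_q(G^+)$ embedding into that localised algebra.
\item $\FullWeylCat$ is not localised at anything. With gates in both $G$-bulges, its internal skein algebra lives over $G\times G\times T\times T^{bop}$, and nothing you cite applies there.
\end{itemize}

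So you have no reason yet for $\widehat{\SkCat}(\FullWeylCat)$ to be monadic over $T$-gates alone. Without that, the algebra isomorphism you compute says nothing about the categories. This is where the content of the theorem lies: the Weyl junction itself performs the localisation.

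The missing step runs as follows.
\begin{enumerate}
\item Excise $\FullWeylCat$ along the Weyl defect into $\LeftWeylCat$ and $\RightWeylCat$, glued over $\WeylCat$.
\item For each half with $G$-, $T$- and $T^{bop}$-gates, show that $\SkAlg^{int}(\LeftWeylAlgGTT)\cong\SkAlg^{int}_{G\times T\times T^{bop}}(\PP_2)[A_\rho^{-1}]$. Skeins avoiding the Weyl defect give an injective copy of $\SkAlg^{int}(\PP_2)$. By Lemma \ref{lemma: skein passing through point defect}, the remaining skeins are those crossing the Weyl defect with non-dominant labels. These are exactly the elements that inverting $A_\rho$ introduces. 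This is \eqref{eq:generic-invertiblity-for-int-algs}.
\item Only now can Proposition \ref{prop:G-invariants-equivalence} close the $G$-gate on each half.
\item Reassemble. The relative tensor product of module categories over $\SkAlg^{int}(\WeylAlg)-\mod_{T\times T^{bop}}$ becomes modules over the relative tensor product of algebras, using braided commutativity (Lemma \ref{lemma:d1-is-commutative}).
\end{enumerate}

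Relatedly, Lemma \ref{lemma:composing-locally} does not let you ``reduce to a $T$-region crossed by $\T_{w_0}$'' at the level of categories. Removing the $G$-bulges categorically is equivalent to what is being proved.

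\textbf{Your stated main obstacle is not one.} By Peter--Weyl, $O_q(G^+/N)^{\otimes 2}\cong\bigoplus_{\lambda,\mu\in\Lambda^+}(V_\lambda^*\otimes V_\mu^*)\boxtimes\CC_\lambda\boxtimes\CC_\mu$. The $T$-gates see only highest-weight lines, so there are no ``non-extremal'' or ``$T$-equivariant refinement'' contributions. By Schur, the $U_q(\g)$-invariants are one-dimensional exactly when $\mu=-w_0\lambda$, spanned by $A_\lambda$. Hence the invariant algebra is $\bigoplus_{\lambda\in\Lambda^+}\chi_\lambda\boxtimes\chi_{-w_0\lambda}$, with $A_\lambda A_\mu=q^{2(\lambda,\mu)}A_{\lambda+\mu}$.

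After inverting $A_\rho$, both sides are one-dimensional in each degree indexed by $\Lambda$. Your map is nonzero in each degree by Lemma \ref{lemma: skein passing through point defect}, so bijectivity is immediate. No Bruhat-cell or Hilbert-series argument is needed.

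A smaller correction: $A_\rho$ is not central, since it has non-trivial $T\times T^{bop}$-weight. What makes the localisation well behaved is that it $q$-commutes with homogeneous elements. That gives the Ore condition of Lemma \ref{lemma:Alambda-good-properties}.
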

Note that even in the statement of Theorem \ref{thm:borel-almost-invertible} we've made implicit use of monadic reconstruction.
Namely, this is the process that describes (free co-completed) skein category as a category of modules for the internal skein algebra of the digon, and in that setting it makes sense to localise the skein $A_\rho$.

The connection to invertibility comes from the fact that Weyl defect is invertible, with $\T_w^{-1} = \T_{w^{-1}}$. Localisation appears often when discussing the skeins or cluster variables associated to the edges of a triangulated surface.
This is in particular a common feature when comparing skein and  cluster algebras of marked surfaces \cite{IY22,Muller_2016}.
From the skein category perspective, inverting edge variables allows us to apply monadic reconstruction to skein categories equipped with $\Rep_qT^{\boxtimes n}$-module category structures, i.e. without including $\Rep_qG$-actions.
This monadic reconstruction is then responsible for the good gluing properties of the associated internal skein algebras.

Concretely, this allows us to model the quantum cluster chart of a decorated triangulated surface from those of its individual triangles and digons.

\subsection{Localisation as a cobordism}\label{sec:localisation-cobordism}

We give a novel interpretation of localisation on quantum decorated character stacks using the defect skein TQFT with parabolic and Weyl defects.
Our basic approach is to build a decorated cobordism $M_\triangle$ from a surface $\Sigma$ with an ideal triangulation $\triangle$, which will relate the localised and non-localised theories. This gives a `sheafy' or categorical extension of localisation.

In what follows fix a Borel subgroup $B \hookrightarrow G$, so that by `parabolic defect' we mean the one associated to this distinguished Borel, and by `Weyl defect' we mean the one associated to the longest word $w_0 \in W$.

\begin{definition}\label{def:skein-cluster-functor}
Fix a closed, oriented surface $\Sigma$ with at least one boundary component, marked points $S = \{s_1,\ldots,s_{v-m}\} \subset \partial \Sigma$, punctures $P = \{s_{v-m+1},\ldots,s_{v}\} \subset \Sigma\setminus\partial \Sigma$, and a triangulation $\triangle$ of $\Sigma$ with vertices $S\cup P$.
We permit self-folded triangulations.

The \emph{\bf localisation functor $\underline{\Sk}(M_\triangle)$} is constructed as follows.

\begin{description}
\item[Step 1: Framed Surface]
   Let $\Sigma'$ denote the decorated surface built from $\Sigma$ as follows. First, remove a small open neighbourhood of each puncture and enclose the newly introduced boundary component with an annular $T$-region.
    Add a contractible $T$-region neighbourhood around each marked point $s_i$ and separate each $T$-region by a parabolic defect from the $G$-region.
    
    \item[Step 2: Decorated Cobordism] 
The decorated cobordism $M_{\triangle} : \Sigma' \to \Sigma_\triangle$ has underlying unstratified manifold with corners $\Sigma \times [0,1]$.

Each $e\in E(\triangle)$ can be made to cross parabolic defect arcs in $\Sigma'$ exactly twice.

Introduce a saddle in the thickened surface whose incoming boundary is a pair small intervals along parabolic arcs, containing their intersections with $e$. The outgoing boundary is two curves running parallel to $e$, between the endpoints of these same small intervals.
This saddle is decorated by the parabolic defect data, so that `inside' the saddle is $T$-region and `outside` is $G$-region.

The stable manifold of the saddle point intersects the saddle's upper boundary in two points.
Connect these points by a curve perpendicular to the edge $e$ and contained in the $T$-region, so that together this curve and the stable manifold bound a disk in the $T$ region. 
Endow this disk with the Weyl defect, see Figure \ref{fig:abelian-cobordism}.

By construction, the outgoing boundary $\Sigma_\triangle$ of $M_{\triangle}$ has a $G$-disk at the centre of each face of $\triangle$, as well as along the boundary between each pair of marked points.
These $G$-regions are connected by Weyl defects perpendicular to the original edges of the triangulation, see Figure \ref{fig:abelian-cobordism}.

\item[Step 3: Skein Functor]
 In the skein theory TFT, $M_\triangle$ induces a functor
\begin{equation}
  \begin{aligned}
    \Sk(M_\triangle) : \widehat{\SkCat(\Sigma')} &\to \widehat{\SkCat}(\Sigma_\triangle).
  \end{aligned}
\end{equation}
Equipping each $T$-region with a gate, we arrive at the internal skein module $\Sk^{int}(M_\triangle)$, which is a $(\SkAlg^{int}(\Sigma'), \SkAlg^{int}(\Sigma_\triangle))$-bimodule internal to $\Rep_qT^{\boxtimes v}$, we therefore get a functor
\begin{equation}
    \begin{aligned}
        \SkAlg^{int}(\Sigma')-\mod_{T^v} &\to \SkAlg^{int}(\Sigma_\triangle)-\mod_{T^v} \\
    \end{aligned}
\end{equation}
which, thanks to their gluing properties acts as $\Sk^{int}(N) \mapsto \Sk^{int}(M_\triangle \cup_{\Sigma'}N)$ on internal skein modules. 
\end{description}
\end{definition}

The name ``localisation functor'' is justified by Corollary \ref{cor:general-surfaces-invertibility}, which states that the freely co-completed skein category of $\Sigma_\triangle$ is a localisation of that of $\Sigma'$.

By the identification between skein categories and factorisation homology\footnote{See \cite{Cooke2019,Brown_Haioun_2026} for an unstratified version}, for $G=\SL_2\CC$ we recover the cluster charts for the quantum decorated character stacks of \cite{JLSS2021}. The precise correspondence is $\widehat{\SkCat}(\Sigma') \cong \mathcal{Z}(\mathbb{S})$ and $\SkAlg^{int}(\Sigma_\triangle)-\mod_{T^m} \cong \mathcal{Z}(\underline{\Delta})$, where the internal skein algebra $\SkAlg^{int}(\Sigma_\triangle)$ is constructed with a $T$-gate at each vertex $s_1,\ldots,s_m$ of the ideal triangulation $\triangle$.

\subsubsection{Localisation stabilises}\label{sec:localisation-stablises}

The procedure in Definition \ref{def:skein-cluster-functor} naturally stabilises once a maximal set of non-overlapping edge variables have been localised.

\begin{proposition}\label{prop:localisation-stablises}
  Let $(\Sigma,S,P)$ be a marked surface with an ideal triangulation $\triangle$ and associated decorated surface $\Sigma_\triangle$, as in Definition \ref{def:skein-cluster-functor}. Suppose that $e \hookrightarrow \Sigma$ is some additional edge, not in $\triangle$.
  The associated skein categories are equivalent:
  \begin{equation}
    \SkCat(\Sigma_\triangle) \cong \SkCat(\Sigma_{\{e\}\cup \triangle}),
  \end{equation}
  where $\Sigma_{\{e\}\cup\triangle}$ is constructed by composing/localising all adjacent pairs of parabolic defects along some choice of path $\Gamma \hookrightarrow \Sigma_\triangle$ which avoids codimension-2 defects and which is isotopic to $e$ upon forgetting the stratification on $\Sigma_\triangle$.
\end{proposition}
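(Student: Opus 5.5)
The plan is to reduce the statement to a local comparison along the path $\Gamma$, using excision together with the generic invertibility of Theorem \ref{thm:borel-almost-invertible}. The first step is to describe $\Sigma_\triangle$ near $e$. The edge $e$ is an additional ideal arc, so its interior crosses some edges of $\triangle$. After all edges of $\triangle$ have been localised, a path $\Gamma$ isotopic to $e$ runs through the $G$-disks at the centres of the faces of $\triangle$ and crosses the Weyl defects $\T_{w_0}$ sitting perpendicular to the edges it meets. At its two ends, $\Gamma$ meets the $T$-regions at the marked points $s_i,s_j$ and crosses the parabolic defects there. Composing, or localising, adjacent parabolic pairs along $\Gamma$ means, for each $G$-disk that $\Gamma$ traverses, replacing the strip around $\Gamma$ by its localised version.

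The second step is to show that each such replacement does not change the skein category. Take a neighbourhood $N$ of $\Gamma$, cut into pieces, one for each $G$-disk crossed. Each piece is a digon-shaped region: a $G$-strip bounded by two parabolic arcs, which may have Weyl defects attached at junctions. By Theorem \ref{thm:borel-almost-invertible}, the localised skein category of such a piece is equivalent to that of the full Weyl-defect picture $\FullWeylCat$. So it is enough to show that the relevant skein $A_\rho$ along that piece is \emph{already} invertible in $\widehat{\SkCat}(\Sigma_\triangle)$. The idea is that the arc of $\Gamma$ inside a face is homotopic to a concatenation of sides of that face, that is, to localised edges of $\triangle$ joined through the Weyl junctions. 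Concretely:
\begin{itemize}
\item Lemma \ref{lemma: skein passing through point defect} lets skeins pass through the Weyl codimension-2 junction at the cost of a nonzero scalar.
\item The Weyl defect itself is invertible, since $\T_{w_0}^{-1}=\T_{w_0}$.
\end{itemize}
Using these, I would write the $A_\rho$-skein along $\Gamma$ as a product of already-inverted edge skeins $A_\rho^{(e')}$, for $e'$ in $\triangle$, up to a $q$-power and Weyl relabellings $\chi_\lambda\mapsto\chi_{w_0\lambda}$. It would then be invertible, and the localisation at it is trivial.

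The third step is gluing. Excision (factorisation homology) expresses the skein category of each decorated surface as a relative tensor product over the skein categories of the cutting annuli. A local equivalence that is compatible with the module structures along the cut then gives a global equivalence $\SkCat(\Sigma_\triangle)\cong\SkCat(\Sigma_{\{e\}\cup\triangle})$. If the equivalence is only available after free co-completion, I would argue through monadic reconstruction and compare internal skein algebras with $T$-gates. The localisations are Ore-type, so the argument should descend to the relevant subcategories.

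The main obstacle is the second step: showing that the skein along $e$ becomes invertible once the edges of $\triangle$ are inverted. This is a quantum analogue of the classical fact that, on the cluster torus, every edge function is a Laurent polynomial in the $\triangle$-edge functions. That fact alone does \emph{not} give invertibility. What rescues the argument is that, on $\Sigma_\triangle$, the $G$-regions have been cut down to disks connected only by Weyl defects. Inside each $T$-region all skeins are monomial, so the $e$-skein must evaluate to a single invertible monomial rather than a sum. I would first check this explicitly for a quadrilateral, with $e$ the flipped diagonal, by computing in $\PP_4$ using \eqref{eq:digon-skein-isomorphism}, and then induct on the number of edges of $\triangle$ that $e$ crosses.
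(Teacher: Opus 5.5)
Your Step 2 names the right tool, but the step you single out as the main obstacle targets the wrong element, and the argument you sketch for it would fail. The edge function of $e$ is \emph{not} a monomial on $\Sigma_\triangle$; take for instance the flipped diagonal $e_{24}$ in $\PP_4$. Already for $G=\SL_2$ it obeys the exchange relation $\Delta_{13}\Delta_{24}=q^{a}\Delta_{12}\Delta_{34}+q^{b}\Delta_{14}\Delta_{23}$. So in the $\triangle$-chart it is $\Delta_{13}^{-1}$ times a binomial, and it is not a unit. Your planned computation in $\PP_4$, followed by induction on crossings, would therefore produce sums rather than a monomial. Your inference ``inside each $T$-region all skeins are monomial, so the $e$-skein is a single monomial'' is also invalid. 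The skein still passes through the $G$-disks of $\Sigma_\triangle$, and these carry non-monomial content; for example, $S_{111}$ is not a unit in $\SkAlg^{int}_\G(\PP_\triangle)$ for $\SL_3$.

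Localising along $\Gamma$ only inverts the local crossing skein $A_\rho$ in each $G$-disk that $\Gamma$ traverses. The missing idea is that its invertibility is purely local and topological:
\begin{itemize}
\item Every face is a triangle, so the parabolic boundary of its $G$-disk is cut by three Weyl junctions into three pairwise adjacent arcs. Hence $\Gamma$ enters and leaves through arcs separated by a single junction.
\item Slide the crossing skein inside the disk, moving its endpoints along their arcs (a stratified isotopy), until it hugs that junction.
\item Lemma \ref{lemma: skein passing through point defect} then turns it into a nonzero multiple of a $T$-strand across one Weyl defect. Such a strand is a unit: the strand of opposite weight inverts it up to a power of $q$.
\end{itemize}
No computation in $\PP_4$ and no induction are needed. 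With this repair your route works: apply Corollary \ref{cor:general-surfaces-invertibility} and note that localising at a unit is trivial. However, this only gives the statement for $\widehat{\SkCat}$. Your descent step is unjustified, since equivalent free co-completions only give Morita equivalence.

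The paper argues without any invertibility check. By excision it reduces to $\PP_4$ with $\triangle=\{e_{12},e_{23},e_{34},e_{41},e_{13}\}$ and $e=e_{24}$. It notes that composing the parallel parabolic defects along $\Gamma$ inside a $G$-disk creates a bubble. It then removes the bubble using Lemmas \ref{lemma:composing-locally} and \ref{lemma: skein passing through point defect}, recovering $\Sigma_\triangle$ up to stratified isotopy. This gives an equivalence directly on skein categories.
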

\begin{proof}
  Since skein categories satisfy excision, we can focus on the minimal local model without loss of generality.
  Let $\Sigma = \PP_4$ be a disk with four marked points labelled $1,2,3,4$ in clockwise order, and let $\triangle = \{e_{12},e_{23},e_{34},e_{41},e_{13}\}$. Here we've labelled each edge according to the vertices it connects.
  Then $e_{24} \not\in \triangle$ is our extra edge, which intersects $e_{13}$ in the interior. 

  Let $\Gamma_{24} \hookrightarrow \Sigma_\triangle$ be a lift of $e_{24}$ as in the proposition statement.
  If $\Gamma_{24}$ passes through no parabolic defects, then there's nothing to be done. Note that a non-trivial skein associated to such a path will be invertible.
  Suppose then that $\Gamma_{24}$ passes through one of the $G$-regions on $\Sigma_{\triangle}$.
  Composing parallel parabolic defects along such a path with introduce a `bubble', but together Lemma \ref{lemma:composing-locally} and Lemma \ref{lemma: skein passing through point defect} imply that we can remove this bubble without changing the skein category, recovering $\Sigma_\triangle$ up to a stratified isotopy.
See Figure \ref{fig:localisation-stablises}.
\end{proof}

\begin{figure}
  \centering
  \includegraphics{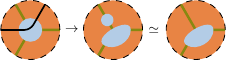}
  \caption{Localisation naturally stabilises, see Proposition \ref{prop:localisation-stablises}. Here $\cong$ means `have equivalent skein categories.'}\label{fig:localisation-stablises}
\end{figure}

\section{Proof of generic invertibility of parabolic defects}\label{sec:invertible-proof}

We can now focus on proving generic invertibility of the parabolic defect $\widetilde{\D_B}$ defined from a Borel subgroup $B\hookrightarrow G$.
Recall our goal:

\begin{theorem*}[Theorem \ref{thm:borel-almost-invertible}]
There exists a categorical equivalence
     \begin{align*}
         \widehat{\SkCat}(\Digon)[A_\rho^{-1}] \cong \widehat{\SkCat}( \FullWeylCat )
     \end{align*}
    where $[A^{-1}_\rho]$ refers to the subcategory where the skein $A_\rho \in \SkAlg^{int}(\PP_2)$ labelled by $\idty\boxtimes\chi_\rho\boxtimes \chi_{-w_0\rho} \in \Rep_qG\boxtimes \Rep_qT\boxtimes\Rep_qT^{bop}$ acts invertibly. Recall moreover that $ \;\widehat{\cdot}$ denotes the free co-completion.
\end{theorem*}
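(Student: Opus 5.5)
The plan is to prove the equivalence \eqref{eq:borel-almost-invertible} by monadic reconstruction on both sides. Each freely co-completed skein category becomes a category of equivariant modules over an internal skein algebra. We then compare the localised digon algebra with the Weyl-defect algebra directly.

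\textbf{Step 1: the left-hand side.} Equip $\PP_2$ with gates in the $G$-region and in both $T$-regions. By \eqref{eq:digon-skein-isomorphism} and monadic reconstruction \cite{GJS2023,Brown_Jordan_2025}, $\widehat{\SkCat}(\Digon)$ is the category of $O_q(G^+/N)^{\otimes 2}$-modules internal to $\Rep_qG\boxtimes\Rep_qT\boxtimes\Rep_qT^{bop}$. Localising at $A_\rho$ means passing to modules over $O_q(G^+/N)^{\otimes 2}[A_\rho^{-1}]$. This is an Ore localisation, since $A_\rho$ is $G$-invariant and of pure $T\times T^{bop}$-weight.

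\textbf{Step 2: removing the $G$-gate.} The next step, which I expect to be the main obstacle, is to show that after inverting $A_\rho$ we may forget the $G$-gate. Concretely, I would aim to prove that taking $G$-invariants,
\begin{equation*}
M\longmapsto M^{G},
\end{equation*}
is an equivalence between the category of $G\times T\times T^{bop}$-equivariant $O_q(G^+/N)^{\otimes 2}[A_\rho^{-1}]$-modules and the category of $T\times T^{bop}$-equivariant modules over $\big(O_q(G^+/N)^{\otimes 2}[A_\rho^{-1}]\big)^{G}$.

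Classically this reflects the fact that $G$ acts freely on the open locus of pairs in $G/N\times G/N$ where the pairing $A_\rho$ is nonzero, with quotient a $T$-torsor up to the $w_0$-twist. In the quantum setting I would first try to show that the counit $O\otimes_{O^G}M^G\to M$ is an isomorphism, via a quantum Frobenius reciprocity argument. The route would be: exhibit the localised algebra as induced from its invariants, namely $O_q(G)$-like tensored with $O^G$. The key input would be that the matrix coefficients of $A_\rho$ generate, so that every $V_\lambda$ appears in the localised algebra with an invertible multiplicity space.

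\textbf{Step 3: computing the invariants.} Compute $\big(O_q(G^+/N)^{\otimes 2}[A_\rho^{-1}]\big)^G$ explicitly. The invariant part of $O_q(G^+/N)^{\otimes 2}$ is spanned by the skeins $A_\lambda$, one for each dominant $\lambda$, by multiplicity one of $\mathrm{Hom}_G(V_\lambda^*\otimes V_\mu^*,\idty)$. These satisfy $A_\lambda A_\mu\propto A_{\lambda+\mu}$. Inverting $A_\rho$ therefore yields the Laurent algebra on the whole weight lattice. It carries weight $\chi_\lambda\boxtimes\chi_{-w_0\lambda}$, i.e. it is $O_q(T)$ twisted by $w_0$, as a commutative algebra in $\Rep_qT\boxtimes\Rep_qT^{bop}$.

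\textbf{Step 4: the right-hand side.} Compute $\SkAlg^{int}(\FullWeylAlg)$ with $T$-gates only. Use the composition and junction lemmas (Lemma \ref{lemma:composing-locally} and Lemma \ref{lemma: skein passing through point defect}) to reduce the $G$-bubbles, so that the surface becomes a $T$-disk crossed by $\T_{w_0}$. Lemma \ref{lemma:weyl-defect-weights} then identifies its algebra with skeins $\chi_\lambda$ crossing the Weyl defect and emerging as $\chi_{w_0\lambda}$. This is the same twisted $O_q(T)$, and every object there is generated by $T$-gate insertions, so monadic reconstruction holds.

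\textbf{Step 5: the comparison.} Define the comparison map via the saddle cobordism of Figure \ref{fig:abelian-cobordism}, sending $A_\lambda$ to the straight skein through $\T_{w_0}$. Check that it respects products; this is up to the scalars $\alpha$ of Lemma \ref{lemma: skein passing through point defect}, which are normalised consistently. Its $T\times T^{bop}$-equivariance is automatic. Bijectivity follows by comparing the weight spaces, each of which is one-dimensional. Combining Steps 2–5 gives the equivalence of co-completed categories.
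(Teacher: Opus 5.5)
\textbf{Main gap: Step 4.} Steps 1--3 follow the paper on the digon side. Your final identification of both $T$-gate algebras with the $w_0$-twisted Laurent algebra over the whole weight lattice also matches the paper's last step. The gap is the claim that every object of the co-completed Weyl-surface skein category is generated by $T$-gate insertions, so that monadic reconstruction with $T$-gates only holds.

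Lemmas \ref{lemma:composing-locally} and \ref{lemma: skein passing through point defect} do not give this. The two $G$-lobes are not composition bubbles. They are bounded by $\widetilde{\D_B}$ meeting $\T_{w_0}$ at Weyl--parabolic junctions, and they meet the boundary circle. So they cannot be removed by stratified isotopy or excision, and the category contains points in the lobes labelled by arbitrary $V\in\Rep_qG$. Lemma \ref{lemma: skein passing through point defect} is a single relation for dominant-weight strands. It helps compute morphisms between $T$-labellings, but it says nothing about whether a $G$-labelled point in a lobe is a retract of sums of $T$-labellings.

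That generation statement is a ``free $G$-action'' statement at the junction, i.e.\ your Step 2 on the Weyl side, and the paper does real work to get it:
\begin{itemize}
\item it cuts the Weyl surface by excision into two junction disks glued along the Weyl strip;
\item it shows that the junction disk with one $G$-gate and two $T$-gates has internal skein algebra $\SkAlg^{int}_{G\times T\times T^{bop}}(\PP_2)[A_\rho^{-1}]$ (the digon algebra embeds as skeins passing on the $G$-side, and the extra skeins crossing $\T_{w_0}$ with non-dominant weights are exactly the inverses of the $A_\lambda$);
\item it applies Proposition \ref{prop:G-invariants-equivalence} to close each $G$-gate;
\item it uses braided commutativity (Lemma \ref{lemma:d1-is-commutative}) to turn the relative tensor product of module categories into modules over $\SkAlg^{int}$ of the Weyl surface with $T$-gates.
\end{itemize}
Without this, your Step 5 only identifies the localised digon category with the subcategory generated by $T$-gate objects, not with all of $\widehat{\SkCat}$ of the Weyl surface.

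\textbf{Secondary issue: Steps 1 and 2.} Your strategy of exhibiting the localised algebra as $O_q(G^+)$ tensored with its invariants is the right one. However, the input you name (``matrix coefficients of $A_\rho$ generate'') is not what makes it work. Likewise, $G$-invariance and pure weight alone do not give the Ore condition you claim in Step 1. The paper proceeds as follows:
\begin{itemize}
\item It builds an explicit injective algebra map $\eta\colon O_q(G^+/N)^{\otimes 2}\to O_q(T)\otimes O_q(G^+)$, $\varphi^\lambda\otimes\psi^\mu\mapsto(\chi_{-\lambda}\otimes Y\rhd\varphi^\lambda)(1\otimes\psi^\mu)$, using the half-twist $Y$.
\item The computation $\eta(A_\lambda)=q^{-(\lambda,\lambda)}\chi_{w_0\lambda}\otimes 1$ gives $q$-commutation, hence the Ore property. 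It also puts $O_q(T)\otimes 1$ in the image once $A_\rho$ is inverted.
\item Surjectivity onto $O_q(G^+)$ comes from Joseph's result that $O_q(G^+/N)$ and $Y\rhd O_q(G^+/N)$ generate it.
\item Conservativity of taking invariants then reduces, by restricting to the subalgebra $O_q(G^+)$, to the fact that $V\mapsto O_q(G^+)\otimes V$ inverts the invariants functor on $O_q(G^+)$-modules in $\Rep_qG$.
\end{itemize}
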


The first step to its proof is to establish monadic reconstruction. 

\subsection{Monadic reconstruction without G-gates}\label{sec:monadic-reconstruction-without-G}

The main result of this section is Proposition \ref{prop:G-invariants-equivalence}, which will crucially allow us to apply monadic reconstruction to skein categories of decorated surfaces without opening $G$-gates.
This comes at the cost of inverting certain skeins. 

This result is a quantum analogue of the fact that the moduli space of pairs decorated flags in generic position is a $G\times T$ - torsor. 
It is particularly useful because the associated quantum cluster theory concerns $G$-invariant elements.

A version of this statement for $G = \SL_2$ appears in \cite[Theorem 3.63]{JLSS2021}. We thank Gus Schrader for laying the groundwork for this section.
\begin{proposition}\label{prop:G-invariants-equivalence}
    The functor $\mathcal{G}$ of taking $U_q(\g)$-invariants is an equivalence of categories
    \begin{equation}
        \begin{aligned}
        \mathcal{G}: O_q(G^+/N)^{\otimes2}[A_\rho^{-1}]-\mod_{G\times T\times T^{bop}} \to  (O_q(G^+/N)^{\otimes2}[A_\rho^{-1}])^{U_q(\g)}-\mod_{T\times T^{bop}}.
    \end{aligned}
    \end{equation}
\end{proposition}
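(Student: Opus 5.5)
The plan is to recognise the statement as an instance of quantum Hamiltonian reduction along a free action, i.e.\ a quantum analogue of descent along the $G\times T$-torsor of generic pairs of decorated flags. Write $\mathcal{O} := O_q(G^+/N)^{\otimes 2}[A_\rho^{-1}]$, an algebra object in $\widehat{\Rep_qG}\boxtimes\Rep_qT\boxtimes\Rep_qT^{bop}$. The functor $\mathcal{G}=(-)^{U_q(\g)}=\Hom_{\Rep_qG}(\idty,-)$ has the left adjoint
\[
\mathcal{F}: M\mapsto \mathcal{O}\otimes_{\mathcal{O}^{U_q(\g)}} M .
\]
I will show that the unit $M\to\mathcal{G}\mathcal{F}(M)$ and the counit $\mathcal{F}\mathcal{G}(N)\to N$ are isomorphisms. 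By Barr--Beck, equivalently, $\mathcal{G}$ is conservative and preserves colimits, and the natural map $\mathcal{O}\otimes_{\mathcal{O}^{U_q(\g)}}\mathcal{O}^{U_q(\g)}\to\mathcal{O}$ is compatible.

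\textbf{Step 1: colimits.} Preservation of colimits is immediate. Taking invariants is exact because $\Rep_qG$ is semisimple at generic $q$, and $\idty$ is compact projective in the ind-completion.

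\textbf{Step 2: a Peter--Weyl-type decomposition (the key step).} I will show that $\mathcal{O}$ is, as an object of $\widehat{\Rep_qG}$ with its $T\times T^{bop}$-grading, a ``regular'' object:
\[
\mathcal{O}\cong \mathcal{O}^{U_q(\g)}\otimes O_q(G)
\]
as a left $\mathcal{O}^{U_q(\g)}$-module, compatibly with the $G$-coaction. Concretely, $O_q(G^+/N)=\bigoplus_\lambda V_\lambda^*$ with weight $\lambda$ for the $T$-grading. So $O_q(G^+/N)^{\otimes2}=\bigoplus_{\lambda,\mu}V_\lambda^*\otimes V_\mu^*$, and $A_\lambda$ spans the invariant line in the $(\lambda,-w_0\lambda)$ summand. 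Multiplying by $A_\rho^{-n}$ shifts the $T$-degrees. In the limit one gets, for each $T\times T$-degree $(\alpha,\beta)$, the colimit of $V_{\lambda}^*\otimes V_{\mu}^*$ over $\lambda,\mu$ increasing by $(\rho,-w_0\rho)$ with $\lambda-\mu'$ fixed. This colimit stabilises to $\bigoplus_\nu V_\nu\otimes V_\nu^*\cong O_q(G)$, by the usual stabilisation of tensor products $V_{\lambda+n\rho}\otimes V^*_{\lambda'+n\rho}$ (PRV / Kostant--Kumar). Hence each $T\times T^{bop}$-graded piece of $\mathcal{O}$ is a free rank-one $O_q(G)$-comodule, and $\mathcal{O}$ is ``coinduced'', which is the quantum shadow of freeness of the $G$-action on the open cell $\{A_\rho\neq0\}$.

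\textbf{Step 3: conservativity and the counit.} Given Step 2, any $\mathcal{O}$-module $N$ in $\widehat{\Rep_qG}$ satisfies
\[
N\cong \mathcal{O}\otimes_{\mathcal{O}^{U_q(\g)}}N^{U_q(\g)}.
\]
This is the quantum Hopf--Galois / Takeuchi-type theorem: $\mathcal{O}$ is a faithfully flat $O_q(G)$-Galois extension of $\mathcal{O}^{U_q(\g)}$, so Schneider's theorem identifies relative Hopf modules with modules over the coinvariants. To verify the Galois condition I would check that
\[
\mathcal{O}\otimes_{\mathcal{O}^{U_q(\g)}}\mathcal{O}\to\mathcal{O}\otimes O_q(G)
\]
is an isomorphism. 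By Step 2 this reduces to comparing graded dimensions, together with injectivity, which is checked on the associated graded (classical limit / generic flags) where it is the torsor statement. The unit then follows formally from freeness. A useful alternative is to follow \cite[Theorem 3.63]{JLSS2021} for $\SL_2$ and mimic it, replacing explicit $\SL_2$ matrix coefficients by the decomposition of Step 2.

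\textbf{Main obstacle.} I expect the main obstacle to be Step 2, and specifically the Galois/surjectivity part. Showing that inverting the single element $A_\rho$ suffices (rather than all $A_\lambda$) requires that $A_\lambda A_\mu$ is proportional to $A_{\lambda+\mu}$ up to lower terms, so that $A_\rho$ divides appropriate powers of every $A_\lambda$. One also needs the stabilisation isomorphisms to be compatible with multiplication in the $q$-deformed setting. I would attack this via the explicit $q$-Plücker relations in $O_q(G^+/N)$ and a PBW/crystal basis filtration argument, reducing to the classical statement at $q=1$ by flatness.
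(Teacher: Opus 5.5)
There is a genuine gap, and it is the point you flag as the main obstacle. Your reduction agrees with the paper's: invariants are exact, so by Barr--Beck everything comes down to conservativity of $\mathcal{G}$, equivalently to the counit $\mathcal{O}\otimes_{\mathcal{O}^{U_q(\g)}}N^{U_q(\g)}\to N$ being an isomorphism. A Schneider-type argument would indeed finish once $\mathcal{O}$ is known to be $O_q(G)$-Galois over $\mathcal{O}^{U_q(\g)}$, since cosemisimplicity of $O_q(G)$ supplies the injectivity/flatness.

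However, nothing in the proposal proves the Galois condition, and Step 2, which is meant to feed it, is wrong as stated. By the stabilisation theorem you invoke, the multiplicity of $V_\nu^*$ in $\varinjlim_n V^*_{\lambda+n\rho}\otimes V^*_{\mu+n\rho}$ is a \emph{weight multiplicity} $\dim V_\nu[\gamma]$, with $\gamma$ fixed by $\lambda,\mu$. It is not $\dim V_\nu$: already for $\SL_2$, $V_{\lambda+n}\otimes V_{\mu+n}$ contains $V_\nu$ at most once. So a single $T\times T^{bop}$-graded piece of $\mathcal{O}$ is one left-coregular weight space $\bigoplus_\nu V_\nu^*\otimes V_\nu[\gamma]$ of $O_q(G)$, not a free rank-one comodule. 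All of $O_q(G)$ appears only after summing over an $O_q(T)$-coset of degrees.

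Even a correct decomposition $\mathcal{O}\cong\mathcal{O}^{U_q(\g)}\otimes O_q(G)$ of modules and comodules only shows that the source and target of the Galois map $\mathcal{O}\otimes_{\mathcal{O}^{U_q(\g)}}\mathcal{O}\to\mathcal{O}\otimes O_q(G)$ have the same size. Bijectivity involves the multiplication. Your plan to ``check injectivity in the classical limit'' would need an integral form over $\K[q^{\pm1}]$ in which localisation, invariants and the Galois map all specialise flatly, and none of this is set up.

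The missing idea is the explicit algebra isomorphism of Proposition~\ref{prop:generic-position-flags-iso},
\[
\eta:\mathcal{O}\xrightarrow{\cong}O_q(T)\otimes O_q(G^+),\qquad \varphi^\lambda\otimes\psi^\mu\mapsto(\chi_{-\lambda}\otimes Y\rhd\varphi^\lambda)(1\otimes\psi^\mu),
\]
built from the half-twist $Y$; multiplicativity comes from $R=(Y^{-1}\otimes Y^{-1})\Delta(Y)$. It gives $\eta(A_\lambda)=q^{-(\lambda,\lambda)}\chi_{w_0\lambda}\otimes1$, and from this several things follow at once:
\begin{itemize}
\item the Ore property of the $A_\lambda$;
\item the exact relation $A_\lambda A_\mu\propto A_{\lambda+\mu}$ with no lower-order terms, so inverting $A_\rho$ inverts every $A_\lambda$ and puts $O_q(T)\otimes1$ in the image;
\item together with a generation result of Joseph, surjectivity of $\eta$.
\end{itemize}
Once $O_q(G^+)$ sits inside $\mathcal{O}$ as a subalgebra, conservativity is immediate. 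Restrict to $O_q(G^+)$-modules in $\widehat{\Rep_qG}\boxtimes\Rep_qT\boxtimes\Rep_qT^{bop}$; there, taking invariants is an equivalence with inverse $V\mapsto O_q(G^+)\otimes V$. The same isomorphism would also make your Galois map bijective directly. So it is this isomorphism, not a dimension count, that your argument needs.
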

Here we write $A-\mod_{G\times T \times T^{bop}}$ to mean the category of $A$-modules in the free co-completion of $\Rep
_q^{fd}G\boxtimes \Rep_q^{fd}T \boxtimes \Rep_q^{fd}T^{bop}$, and will use $\sigma$ to denote the braiding in any ambient braided tensor category, e.g. $\sigma(v_\lambda \otimes v_\mu) = q^{(\lambda,\mu)}v_\mu \otimes v_\lambda$ in $\Rep_qT$.
The algebra $O_q(G^+/N)$ is a twisted version of $O_q(G/N)$, both introduced below.

Recall that the Peter-Weyl theorem decomposes the Faddeev-Reshetikhin-Takhtajan (FRT) algebra $O_q(G)$ into a direct sum over finite dimensional irreducible representations:
\begin{equation}\label{eq:peter-weyl-FRT}
     O_q(G) \cong \bigoplus V_\lambda^* \boxtimes V_\lambda.
\end{equation}
The isomorphism \eqref{eq:peter-weyl-FRT} holds both as algebras and $U_q(\g)^{cop}\otimes U_q(\g)$ modules, where the $U_q(\g)$-actions on the first and second factor of the right hand side correspond to the right and left co-regular action, respectively.
One can then decompose $O_q(G/N) := O_q(G)^N$ as
\begin{equation}
    O_q(G/N) \cong \bigoplus V_\lambda^* \boxtimes \mathbb{C}_\lambda,
\end{equation} where the second factor on the right hand side is the highest weight line of $V_\lambda$. 
We will denote an element $\varphi \boxtimes v_\lambda$ in $V_\lambda^* \boxtimes \mathbb{C}_\lambda \subset O_q(G/N)$ by $\varphi^\lambda$ and the left co-regular action by $x \rhd \varphi^\lambda := \varphi \boxtimes (x \cdot v_\lambda)$. 

We will need a twisted version of $O_q(G)$, denoted by $O_q(G^+)$, which has the same underlying vector space as $O_q(G)$ and multiplication given by
\begin{align}\label{eq:Gp-multiplication}
    (f \boxtimes v)(g\boxtimes w) = (f \otimes g) \boxtimes \sigma(v \otimes w). 
\end{align}
The Peter-Weyl theorem still holds but now as $U_q(\g) \otimes U_q(\g)$-modules.
The tensor equivalence 
\begin{equation}
    (\id,\sigma)\boxtimes (\id,\id) : \Rep_q G \boxtimes \Rep_qG \to \Rep_qG^{mop} \boxtimes \Rep_qG
\end{equation}
sends $O_q(G^+) \to O_q(G)$, making the two \emph{twist equivalent}.
Taking $N$-invariants, we get a twisted version $O_q(G^+/N)$ of the algebra $O_q(G/N)$.
The connection to skein theory is via the isomorphisms
  \begin{align}
    O_q(G^+/N) &\cong \SkAlg^{int}(\DD_1) &&\text{in } \Rep_q G \boxtimes \Rep_q T^{bop}, \\
    O_q(G^+) &\cong \SkAlg^{int}(\DD) &&\text{in }  \Rep_qG\boxtimes \Rep_qG, \\
    O_q(G^+/N)^{\otimes2} &\cong \SkAlg^{int}(\PP_2) && \text{in } \Rep_qG \boxtimes \Rep_qT \boxtimes \Rep_qT^{bop}.
  \end{align}

Next we build a homomorphism $\tau$ which we will modify until we arrive at the isomorphism \eqref{eq:generic-position-flags-iso} of Proposition \ref{prop:generic-position-flags-iso}.
This map features the \emph{\bf half-twist element $Y$} in the Lusztig's completion of $U_q(\g)$, see e.g. \cite[\S 4.1]{Snyder_Tingley_2009}.
\begin{lemma}\label{lemma:GpN-to-TGp}
    The following is an algebra homomorphism in $\Rep_qG \boxtimes \Rep_qT \boxtimes \Rep_qT^{bop}$
    \begin{equation}
        \begin{aligned}
        \tau : O_q(G^+/N) &\to O_q(T)\otimes O_q(G^+)\\
        \varphi^\lambda &\mapsto \chi_{-\lambda} \otimes( Y\rhd \varphi^\lambda ),
    \end{aligned}
    \end{equation}
    where $U_q(\mathfrak{g})\boxtimes \idty \boxtimes \idty$ acts by the right co-regular action on both $O_q(G^+/N)$ and $O_q(G^+)$, while $\idty\boxtimes U_q(\t)\boxtimes \idty$ acts trivially on $O_q(G^+)$, by the right co-regular action on $O_q(T)$, and by the left co-regular action on $O_q(G^+/N)$.
    Conversely, $\idty\boxtimes \idty \boxtimes U_q(\t)$ acts by the left co-regular action on $O_q(G^+)$, by the right co-regular action twisted by the involution $\theta(K_\lambda) = K_{-w_0(\lambda)}$ on $O_q(T)$, and trivially on $O_q(G^+/N).$
\end{lemma}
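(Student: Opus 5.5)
We have to check two things: that $\tau$ is a morphism in $\Rep_qG\boxtimes\Rep_qT\boxtimes\Rep_qT^{bop}$, and that it is multiplicative. The unit is easy, since $\tau(\varphi^0)=\chi_0\otimes Y\rhd\varphi^0=1\otimes 1$.

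\textbf{Equivariance.} We check the three factors separately on a weight vector $\varphi^\lambda$.
\begin{enumerate}
\item[(i)] For the first factor, the right co-regular action of $U_q(\g)$ only touches the $V_\lambda^*$ tensorand. The left co-regular action by $Y$ only touches the $V_\lambda$ tensorand. Hence the two commute and $\tau$ is $U_q(\g)$-linear.
\item[(ii)] For the second factor, $\varphi^\lambda$ has weight $\lambda$ under the left co-regular $U_q(\t)$-action on $O_q(G^+/N)$. The element $\chi_{-\lambda}\in O_q(T)$ then has the matching weight under the right co-regular action, with signs fixed by the convention for $O_q(T)$.
\item[(iii)] For the third factor, $Y$ sends the highest weight line $\CC_\lambda$ of $V_\lambda$ to the lowest weight line $\CC_{w_0\lambda}$, since $Y$ lifts $w_0$ up to scalars. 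So $Y\rhd\varphi^\lambda$ has left weight $w_0\lambda$ in $O_q(G^+)$, and $\chi_{-\lambda}$ has weight $-w_0(-\lambda)\cdot(-1)$ after the twist by $\theta$. I would verify with the sign conventions that these agree, and that $O_q(G^+/N)$ carries trivial action there.
\end{enumerate}

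\textbf{Multiplicativity.} Write $\varphi^\lambda\psi^\mu=(\varphi\otimes\psi)^{\lambda+\mu}$, where the $V_\lambda^*\otimes V_\mu^*$ part is projected onto $V_{\lambda+\mu}^*$ via the Cartan component. By \eqref{eq:Gp-multiplication}, on highest weight vectors the twist contributes $\sigma(v_\lambda\otimes v_\mu)=q^{(\lambda,\mu)}v_\mu\otimes v_\lambda$. The image $v_\mu\otimes v_\lambda$ spans the highest line of $V_\mu\otimes V_\lambda$, so $\varphi^\lambda\psi^\mu = q^{(\lambda,\mu)}(\varphi\otimes\psi)\boxtimes v_{\lambda+\mu}$, modulo identifications. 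We then compare two products.
\begin{itemize}
\item On the target side, $O_q(T)$ is commutative, so $\chi_{-\lambda}\chi_{-\mu}=\chi_{-\lambda-\mu}$. However, the algebra structure on $O_q(T)\otimes O_q(G^+)$ is the braided tensor product in $\Rep_qG\boxtimes\Rep_qT\boxtimes\Rep_qT^{bop}$. Moving $Y\rhd\varphi^\lambda$ past $\chi_{-\mu}$ therefore produces a braiding factor $q^{\pm(w_0\lambda,\mu)}$-type from the $T^{bop}$ component, where $\chi$ and $Y\rhd\varphi$ both carry weights.
\item In $O_q(G^+)$, $(Y\rhd\varphi^\lambda)(Y\rhd\psi^\mu)=(\varphi\otimes\psi)\boxtimes\sigma(Yv_\lambda\otimes Yv_\mu)$. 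Here the key input is the coproduct formula for the half-twist, $\Delta(Y)=(Y\otimes Y)\,\mathcal{R}_{21}$ up to normalisation (Snyder--Tingley). Since $\mathcal{R}$ on $v_\mu\otimes v_\lambda$ acts by the scalar $q^{(\lambda,\mu)}$ on highest vectors, $Y$ applied to $v_{\lambda+\mu}\subset V_\mu\otimes V_\lambda$ equals $Yv_\mu\otimes Yv_\lambda$ up to an explicit power of $q$. The braiding $\sigma$ on lowest weight vectors gives $q^{(w_0\lambda,w_0\mu)}=q^{(\lambda,\mu)}$.
\end{itemize}
Collecting all $q$-powers, the braided-tensor factor from $O_q(T)$ should cancel the discrepancy, giving $\tau(\varphi^\lambda)\tau(\psi^\mu)=\tau(\varphi^\lambda\psi^\mu)$.

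The main obstacle is this bookkeeping of $q$-powers. It involves the normalisation of $Y$ (including the sign and $q^{\text{ht}}$ factors in Lusztig's completion), the twisted multiplication on $O_q(G^+)$, and the braiding in $T^{bop}$ with the $\theta$-twist. I would first test the whole computation for $G=\SL_2$ with $\lambda=\mu=\varpi$, then fix conventions so the general scalar identity reduces to $(w_0\lambda,w_0\mu)=(\lambda,\mu)$.
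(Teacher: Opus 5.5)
Your plan is the paper's proof in outline. Equivariance comes from the commuting left/right co-regular actions plus weight matching. Multiplicativity combines the twisted product $q^{(\lambda,\mu)}$, the identity $\Delta(Y)=(Y\otimes Y)R$, the braiding $q^{(w_0\lambda,w_0\mu)}$ on lowest weight vectors, and the braided-tensor factor, so that everything reduces to $(w_0\lambda,w_0\mu)=(\lambda,\mu)$. (The paper uses $R$ rather than your $R_{21}$, but both give $q^{(\lambda,\mu)}$ on a product of highest weight vectors.)

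One correction to your bookkeeping: the braided-tensor factor comes from the third-factor weights $w_0\lambda$ of $Y\rhd\varphi^\lambda$ and $-w_0\mu$ of $\chi_{-\mu}$, as you computed in (iii). It is therefore $q^{-(w_0\lambda,-w_0\mu)}=q^{(\lambda,\mu)}$, not $q^{\pm(w_0\lambda,\mu)}$. The two differ whenever $-w_0\neq\mathrm{id}$ (e.g. $\SL_3$), which your proposed $\SL_2$ test cannot detect.
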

\begin{proof}
The left and right co-regular action commute, so the map $\tau$ is $U_q(\g)$-linear.

The character $\chi_{-\lambda}$ has weight $\lambda$ with respect to the right co-regular action and hence has weight $(\lambda,-w_0\lambda)$ in $O_q(T)$.
The weight of the left co-regular action on $Y \rhd \varphi^\lambda = \varphi \boxtimes (Y\cdot v_\lambda)$ is given by the weight of $Y\cdot v_\lambda$. As $Y$ interchanges the highest and lowest weight lines, the $U_q(\t) \otimes U_q(\t)$-weight of $Y \rhd \varphi^\lambda$ is $(0, w_0\lambda)$. Thus, the resulting weight of $\tau(\varphi^\lambda)$ is $(\lambda,0)$, matching that of $\varphi^\lambda$ and proving that the map $\tau$ is a morphism of $U_q(\t) \otimes U_q(\t)$-modules.

We can check directly that $\tau$ respects the product:
\begin{equation}
    \begin{aligned}
    \tau(\varphi^\lambda\psi^\mu) &= \tau(q^{(\lambda,\mu)}(\varphi \otimes \psi)\boxtimes (w_\mu \otimes v_\lambda))
    \\
    &= q^{(\lambda,\mu)}\chi_{-\lambda-\mu}\otimes(\varphi \otimes \psi)\boxtimes (\Delta(Y)\cdot(w_\mu \otimes v_\lambda))\\
    &= q^{2(\lambda,\mu)}\chi_{-\lambda-\mu}\otimes(\varphi \otimes \psi)\boxtimes (Y\cdot w_\mu \otimes Y\cdot v_\lambda))\\
    &= q^{2(\lambda,\mu)-(w_0\lambda,w_0\mu)}\chi_{-\lambda-\mu}\otimes(Y\rhd \varphi^\lambda)( Y\rhd \psi^\mu)\\
    &= q^{(\lambda,\mu)}\chi_{-\lambda-\mu}\otimes(Y\rhd \varphi^\lambda)( Y\rhd \psi^\mu).
\end{aligned}
\end{equation}
In the third equality we use that by definition of a half-twist element, the $R$-matrix can be written as $R = (Y^{-1}\otimes Y^{-1})\Delta(Y)$. The other $q$-factors follow from \eqref{eq:Gp-multiplication}.
We conclude
\begin{equation}
    \begin{aligned}
    \tau(\varphi^\lambda)\tau(\psi^\mu) &=  (  \chi_{-\lambda}\otimes Y\rhd \varphi^\lambda )(\chi_{-\mu} \otimes Y\rhd \psi^\mu ) \\ 
    &= q^{-(w_0\lambda,-w_0\mu)} \chi_{-\lambda-\mu} \otimes(Y\rhd \varphi^\lambda)( Y\rhd \psi^\mu) \\
    & = \tau(\varphi^\lambda\psi^\mu),
\end{aligned}
\end{equation}
where now the power of $q$ comes from the braided tensor product of the algebras $O_q(T)$ and $O_q(G^+)$ in $ \Rep_qT \boxtimes \Rep_qT^{bop}$.
\end{proof}
Consider the tensor product $O_q(G^+/N)^{\otimes2}$ in $\Rep_qG\boxtimes \Rep_qT\boxtimes\Rep_q T^{bop}$, where now the actions of $\idty\boxtimes U_q(\t) \boxtimes \idty$ and $\idty\boxtimes\idty\otimes U_q(\t)$ correspond to the left co-regular action on the first and second $O_q(G^+/N)$ factor, respectively. Meanwhile, $U_q(\g)$ acts diagonally by the right co-regular action.
\begin{lemma}\label{lemma:non-localised-GN2-to-TxG-map}
    The map
    \begin{equation}
        \begin{aligned}
        \eta: O_q(G^+/N)^{\otimes 2} &\to O_q(T) \otimes O_q(G^+)\\
        \varphi^\lambda \otimes \psi^\mu &\mapsto \tau(\varphi^\lambda)(1\otimes\psi^\mu),
        \end{aligned}
    \end{equation}
        is an injective morphism of algebras in $\Rep_qG\boxtimes \Rep_qT\boxtimes \Rep_qT^{bop}$.
\end{lemma}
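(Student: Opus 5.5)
The plan splits into two parts: first show that $\eta$ is a morphism of algebras in $\Rep_qG\boxtimes\Rep_qT\boxtimes\Rep_qT^{bop}$, then show that it is injective.

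\textbf{Equivariance.} We regard $\eta$ as the composite
\begin{equation*}
O_q(G^+/N)\otimes O_q(G^+/N)\xrightarrow{\tau\otimes\iota} \big(O_q(T)\otimes O_q(G^+)\big)\otimes O_q(G^+)\xrightarrow{\ m\ } O_q(T)\otimes O_q(G^+),
\end{equation*}
where $\iota: O_q(G^+/N)\hookrightarrow O_q(G^+)$ is the inclusion of $N$-invariants and $m$ is multiplication in the $O_q(G^+)$ factor. Each piece is a morphism in the ambient category:
\begin{itemize}
\item $\tau$ is a morphism by Lemma \ref{lemma:GpN-to-TGp};
\item $\iota$ intertwines the right co-regular $U_q(\g)$-action, and sends the left co-regular $\idty\boxtimes U_q(\t)$-weight of the second factor to the left co-regular weight on $O_q(G^+)$, which is exactly how the third tensor factor acts there;
\item multiplication in $O_q(G^+)$ is equivariant, since $O_q(G^+)$ is an algebra object in $\Rep_qG\boxtimes\Rep_qG$ and we restrict the second factor to $T$.
\end{itemize}

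\textbf{Multiplicativity.} Because $\eta$ is a composite of morphisms, the question reduces to the interchange law in the braided tensor product. We compute
\begin{equation*}
\eta\big((\varphi\otimes\psi)(\varphi'\otimes\psi')\big)
\end{equation*}
using the braided product on $O_q(G^+/N)^{\otimes2}$. This produces the braiding $\sigma_{\psi,\varphi'}$. It carries a $q$-power from the $U_q(\g)$-part (absorbed, since $U_q(\g)$ acts diagonally on everything) and one from the pair of $T$-weights: the weight $(0,\mu)$ of $\psi$ in the $T\times T^{bop}$ components against the weight $(\lambda',0)$ of $\varphi'$.

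Next we expand $\tau(\varphi)\,\iota(\psi)\,\tau(\varphi')\,\iota(\psi')$. We commute $\iota(\psi)$ past $\tau(\varphi')=\chi_{-\lambda'}\otimes Y\rhd\varphi'$, which involves two moves:
\begin{itemize}
\item passing $\iota(\psi)$ across $\chi_{-\lambda'}$ uses the braiding between $O_q(T)$ and $O_q(G^+)$, computed as in Lemma \ref{lemma:GpN-to-TGp};
\item reordering $\psi$ and $Y\rhd\varphi'$ inside $O_q(G^+)$ uses the $G^+$-multiplication \eqref{eq:Gp-multiplication}, which is the braiding on left weights. Here the left weight of $Y\rhd\varphi'$ is $w_0\lambda'$ and that of $\psi$ is $\mu$.
\end{itemize}
We then check that the resulting $q$-exponents agree. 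I expect this bookkeeping, with the twisted involution $\theta$ on the $T^{bop}$ factor, to work out exactly as it does in Lemma \ref{lemma:GpN-to-TGp}. A cleaner route is to observe that $\tau\otimes\iota$ lands in a pair of subalgebras which braided-commute in the appropriate sense, so that multiplication is an algebra map by the general fact for braided tensor products.

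\textbf{Injectivity.} This is the main obstacle. The approach is to use Peter--Weyl and grade by $T\times T$-weights.
\begin{itemize}
\item The summand $V_\lambda^*\otimes V_\mu^*$ (highest weight lines $\lambda,\mu$) maps into $\chi_{-\lambda}\otimes O_q(G^+)$, so different $\lambda$ land in linearly independent pieces. It therefore suffices to fix $\lambda$.
\item For fixed $\lambda$, the map is $\varphi\otimes\psi\mapsto(Y\rhd\varphi^\lambda)\,\psi^\mu$. In $O_q(G^+)\cong\bigoplus V_\nu^*\boxtimes V_\nu$ this product is the Cartan-type product of a lowest-weight vector $Y v_\lambda$ of $V_\lambda$ with the highest-weight vector $v_\mu$ of $V_\mu$, projected to components $V_\nu$, $\nu\le\lambda+\mu$.
\item On the left (right co-regular) factor the map is $V_\lambda^*\otimes V_\mu^*\to\bigoplus_\nu V_\nu^*$, dual to the multiplication map. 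Injectivity thus amounts to saying that the vector $Yv_\lambda\otimes v_\mu$ generates all of $V_\lambda\otimes V_\mu$ as a $U_q(\g)$-module, i.e. is cyclic.
\end{itemize}
To prove cyclicity, I would use the classical fact that a lowest$\otimes$highest weight vector is cyclic in $V_\lambda\otimes V_\mu$ (Kostant; quantum version via $U_q(\mathfrak n^-)$ acting on $v_\mu$ while $U_q(\mathfrak n^+)$ acts on $Yv_\lambda$, and a triangularity argument on weights). If this proves delicate, a fallback is to specialise at $q=1$ and use that $G/N\times G/N\to T\times G$, $(g_1N,g_2N)\mapsto(\cdot,g_1w_0g_2\ldots)$ is dominant onto its image (the generic-position locus is open). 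Injectivity then follows by a flatness/specialisation argument over $\CC[q^{\pm1}]$.
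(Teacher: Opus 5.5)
\textbf{The gap is in the multiplicativity step.} Your equivariance argument is fine. Your reduction of multiplicativity to a braided-commutation check between the images of $\tau$ and $1\otimes\iota$ is also the right one; it is exactly what the paper verifies. But your description of how that check works is wrong, and the $q$-exponent bookkeeping you propose cannot close.

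\begin{itemize}
\item On the source $O_q(G^+/N)^{\otimes 2}$, the $\Rep_qT\boxtimes\Rep_qT^{bop}$-part of $\sigma_{\psi,\varphi'}$ is trivial. This is because $\psi$ and $\varphi'$ carry their weights $(0,\mu)$ and $(\lambda',0)$ in different Deligne factors.
\item The whole content is the $\Rep_qG$-part. It is not a $q$-power, and it is not \emph{absorbed} because $U_q(\g)$ acts diagonally. It is the honest R-matrix map $\psi\otimes\varphi'\mapsto R_{(2)}\cdot\varphi'\otimes R_{(1)}\cdot\psi$, with $R$ acting through the right co-regular action.
\item On the target side, reordering $\psi^\mu$ and $Y\rhd\varphi'$ inside $O_q(G^+)$ is not a left-weight scalar either. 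The twisted product \eqref{eq:Gp-multiplication} braids only the left co-regular factor. The right co-regular factors $f\otimes g$ and $g\otimes f$ can only be compared through the braiding of $\Rep_qG$. So $O_q(G^+)$ is not $q$-commutative.
\end{itemize}

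\textbf{The missing ingredient} is the exchange (RTT-type) relation in $O_q(G^+)$. For $\psi^\mu$ whose left vector is highest weight and $f$ of left co-regular weight $\nu$,
\[
q^{(\mu,\nu)}\,\psi^\mu f \;=\; (R_{(2)}\cdot f)(R_{(1)}\cdot\psi^\mu),
\]
again with $R$ acting by the right co-regular action. The left co-regular side contributes only a scalar because $v_\mu$ is highest weight. Apply this with $f=Y\rhd\varphi'$ and $\nu=w_0\lambda'$:
\begin{itemize}
\item The scalar $q^{(\mu,w_0\lambda')}$ that you correctly get from moving $\psi^\mu$ past $\chi_{-\lambda'}$ is exactly the one the exchange relation requires.
\item Left and right co-regular actions commute, so $R_{(2)}\cdot(Y\rhd\varphi')=Y\rhd(R_{(2)}\cdot\varphi')$.
\item The product therefore becomes $\eta(R_{(2)}\cdot\varphi'\otimes R_{(1)}\cdot\psi)$, which matches the braided product on the source.
\end{itemize}
Without this relation, the check does not follow from any general fact about braided tensor products.

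\textbf{Injectivity is fine.} Your argument separates the pieces by the two $T$-weights. It then reduces to cyclicity of $Yv_\lambda\otimes v_\mu$ in $V_\lambda\otimes V_\mu$: $U_q(\mathfrak n^+)$ sweeps out $V_\lambda\otimes v_\mu$, and $U_q(\mathfrak n^-)$ then acts on the second factor. This is a correct and more detailed version of the paper's one-line appeal to the triangular decomposition. Your $q=1$ fallback has the classical map backwards: what is needed is dominance of $T\times G\to G/N\times G/N$. You do not need the fallback, though.
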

\begin{proof}
Both $\varphi^\lambda \otimes \psi^\mu$ and its image have $U_q(\t) \otimes U_q(\t)$-weight $(\lambda,\mu)$, making $\eta$ a morphism in $\Rep_qT\boxtimes \Rep_qT^{bop}$. Both $\tau$ and multiplication in $O_q(G^+)$ are $U_q(\g)$-equivariant, so $\eta$ is as well.

For an element $f^\lambda := f \boxtimes v_\lambda$ in $O_q(G^+)$, a weight vector $v_\lambda$, and $\psi^\mu := \psi \boxtimes v_\mu$ in $O_q(G^+/N)$, we have
\begin{equation}
    q^{(\mu,\lambda)} \psi^\mu f^\lambda = (R_{(2)}\cdot f^\lambda)(R_{(1)}\cdot \psi^\mu),
\end{equation}
where the $R$-matrix is understood to be acting by the right co-regular action.
In particular
\begin{equation}\label{eq:FRT-relation}
    q^{(\mu,w_0\lambda)}\psi^\mu (Y \rhd \varphi^\lambda) = (R_{(2)}\cdot (Y\rhd \varphi^\lambda))(R_{(1)}\cdot \psi^\mu).
\end{equation}
We can verify directly that $\eta$ respects the product:
\begin{equation}
    \begin{aligned}
    \eta(1 \otimes \psi^\mu)\eta(\varphi^\lambda \otimes 1) &= (1 \otimes \psi^\mu)\tau(\varphi^\lambda)\\
    &=(1 \otimes \psi^\mu)(\chi_{-\lambda}\boxtimes Y \rhd \varphi^\lambda)\\
    &=q^{-(\mu,-w_0\lambda)}\chi_{-\lambda}\boxtimes \psi^\mu (Y \rhd \varphi^\lambda)) \\
    &=\chi_{-\lambda}\boxtimes (R_{(2)}\cdot (Y\rhd \varphi^\lambda))(R_{(1)}\cdot \psi^\mu)\\
    &= \tau(R_{(2)}\cdot\varphi^\lambda )(1\otimes R_{(1)}\cdot\psi^\mu ) \\
    &=  \eta(R_{(2)}\cdot\varphi^\lambda \otimes R_{(1)}\cdot\psi^\mu )\\
     &=\eta((1 \otimes \psi^\mu)(\varphi^\lambda \otimes 1)) ,\\  
\end{aligned}
\end{equation}
where the power of $q$ in the third line comes from the braided tensor product of the algebras $O_q(T)$ and $O_q(G^+)$ in $ \Rep_qT \boxtimes \Rep_qT^{bop}$.
We use \eqref{eq:FRT-relation} to obtain the fourth equality and the fact that left and right co-regular action commute to obtain the fifth equality. The last line follows from the braided tensor product of $O_q(G^+/N)^{\otimes2}$ in $\Rep_qG\boxtimes \Rep_qT\boxtimes \Rep_qT^{bop}$.

Injectivity follows in the standard way from the triangular decomposition of $U_q(\g)$.
\end{proof}

Next we establish that we can localise at the set of $A_\lambda$'s. For any dominant $G$-weight $\lambda$, let $v_\lambda$ be a highest weight vector of $V_\lambda$ and $v_\lambda^*$ be its dual, normalised such that $\langle v_\lambda^*,v_\lambda\rangle = 1$.
From $M^\lambda := v_\lambda^* \boxtimes v_\lambda$ in $O_q(G^+/N)$ we obtain an algebraic description of the element $A_\lambda$ referenced in Theorem \ref{thm:borel-almost-invertible}:
\begin{equation}\label{eq:digon-coordinate}
    \begin{aligned}
    A_\lambda &= (Y^{-1}\rhd SM^{\lambda}_{(1)})\otimes M^\lambda_{(2)}
\end{aligned}
\end{equation}
Where $S$ is the antipode.
\begin{lemma}\label{lemma:Alambda-good-properties}
    Each $A_\lambda$ is $U_q(\g)$-invariant with $U_q(\t)\otimes U_q(\t)$-weight $(-w_0\lambda,
    \lambda)$, and any multiplicative set generated by $A_\lambda$'s is an Ore set in $O_q(G^+/N)^{\otimes 2}$.
\end{lemma}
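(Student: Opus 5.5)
We need three things: $U_q(\g)$-invariance of $A_\lambda$, its $U_q(\t)\otimes U_q(\t)$-weight, and the Ore property. Our strategy is to push everything through the injective algebra map $\eta$ of Lemma \ref{lemma:non-localised-GN2-to-TxG-map}, where $A_\lambda$ becomes an explicit, nearly central element.

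\textbf{Invariance.} Write $A_\lambda$ as the image of the coevaluation $\idty \to V_\lambda^*\otimes V_\lambda$ under a composite of equivariant maps. Concretely, $M^\lambda_{(1)}\otimes M^\lambda_{(2)}$ comes from the coproduct of the matrix coefficient $v_\lambda^*\boxtimes v_\lambda$. Applying $S$ to the first leg and pairing the right co-regular actions reproduces the canonical invariant element $\sum_i \varphi_i\otimes \varphi^i$ of $V_\lambda^{*}\otimes V_\lambda^{**}$-type, up to the identification of duals. Acting by $Y^{-1}$ via the left co-regular action commutes with the right co-regular action, so it does not affect $U_q(\g)$-invariance. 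We will check the braided-tensor subtleties in $\Rep_qG$, namely that the diagonal action on a braided tensor product is the Hopf coproduct action, by a direct comparison with the standard identity $\sum S(x_{(1)}) x_{(2)}=\epsilon(x)$.

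\textbf{Weight.} This is bookkeeping. The second factor $M^\lambda_{(2)}$ contributes the left co-regular weight $\lambda$ of the highest weight vector $v_\lambda$. In the first factor, $Y^{-1}$ sends the relevant weight vector to the opposite extremal line. The antipode contributes a sign, so the left weight is $-w_0\lambda$. This gives $(-w_0\lambda,\lambda)$, consistent with the labelling $\chi_\rho\boxtimes\chi_{-w_0\rho}$ once orientations are accounted for.

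\textbf{Ore property.} This is the main step. We will show that $A_\lambda$ is normal, i.e. $A_\lambda x = q^{c(\lambda,\mathrm{wt}(x))} x A_\lambda$ for homogeneous $x$. A multiplicative set of normal elements in a domain is automatically Ore. The domain property follows because $O_q(G^+/N)^{\otimes 2}$ embeds via $\eta$ into $O_q(T)\otimes O_q(G^+)$, and $O_q(G)$ is a domain, with the twist and the torus factor preserving this.

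To establish normality, compute $\eta(A_\lambda)$. We expect it to be $\chi_{w_0\lambda}$-type times the product $(Y\rhd Y^{-1}\rhd S M^\lambda_{(1)})M^\lambda_{(2)}$, which collapses by the antipode identity to a scalar multiple of the character $\chi_{\pm\lambda}\otimes 1$. In $O_q(T)\otimes O_q(G^+)$, a pure torus character $q$-commutes with every weight-homogeneous element via the braided product, with exponent determined by the $T$-weights. Injectivity of $\eta$ then pulls back the normality relation.

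We expect the main obstacle to be tracking the twists. The exact form of $\tau$ uses $Y$ together with the $O_q(G^+)$ twist, so showing that $\eta(A_\lambda)$ is genuinely a monomial in $O_q(T)$ requires carefully combining $\Delta(Y)=(Y\otimes Y)R$ with the braided multiplication. If this fails, the fallback is to prove normality directly from the FRT relation \eqref{eq:FRT-relation} applied to the lowest/highest weight coefficients, using that $R$ acts on extremal weight vectors by a pure $q$-power.
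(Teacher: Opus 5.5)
Your proposal is correct, and its main step (the Ore property) is the paper's argument. The paper computes $\eta(A_\lambda)=\chi_{w_0\lambda}\otimes S M^\lambda_{(1)}M^\lambda_{(2)}=q^{-(\lambda,\lambda)}\chi_{w_0\lambda}\otimes 1$. It observes that this element $q$-commutes with every homogeneous element, and pulls the relation back along the injective algebra map $\eta$.

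The difference lies in invariance and the weight. The paper extracts these from the same computation: $\eta$ is injective and equivariant for $U_q(\g)$ and $U_q(\t)\otimes U_q(\t)$, so they follow from those of $q^{-(\lambda,\lambda)}\chi_{w_0\lambda}\otimes 1$. Your separate coevaluation/antipode check and weight bookkeeping are valid, but redundant once you have $\eta(A_\lambda)$.

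Three simplifications and corrections:
\begin{itemize}
\item The obstacle you anticipate is smaller than you fear. $Y$ and $Y^{-1}$ cancel directly inside $\tau$, so $\Delta(Y)=(Y\otimes Y)R$ is not needed here. The only twist to track is the braiding on the extremal pair $v_\lambda^*\otimes v_\lambda$ in the product of $O_q(G^+)$, which contributes the scalar $q^{-(\lambda,\lambda)}$.
\item The resulting character is $\chi_{w_0\lambda}$, not $\chi_{\pm\lambda}$.
\item You do not need the domain property, and your claim that passing to $O_q(G^+)$ preserves it is only asserted. Normal elements already satisfy the Ore condition. $A_\lambda$ is regular because $\eta(A_\lambda)$ is a unit in $O_q(T)\otimes O_q(G^+)$ and $\eta$ is injective.
\end{itemize}
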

\begin{proof}
    A direct computation shows that
\begin{align}\label{eq:eta-of-digon-element}
    \eta(A_\lambda) = \chi_{w_0\lambda} \otimes (SM^{\lambda}_{(1)}M^\lambda_{(2)}) = q^{-(\lambda,\lambda)}\chi_{w_0\lambda} \otimes 1.
\end{align}
Since $\eta$ respects the $U_q(\mathfrak{g})$ and $U_q(\mathfrak{t})$ actions, we see that $A_\lambda$ is $U_q(\g)$-invariant and has $U_q(\t) \otimes U_q(\t)$-weight $(-w_0\lambda,\lambda)$.
It also follows from \eqref{eq:eta-of-digon-element} that $\eta(A_\lambda)$ and $\eta(\varphi^\mu\otimes \psi^\nu)$ $q$-commute for any element of the form $\varphi^\mu\otimes \psi^\nu$ in $O_q(G^+/N)^{\otimes 2}$. Because $\eta$ is an injective algebra morphism, it follows that $A_\lambda$ and $\varphi^\mu\otimes \psi^\nu$ $q$-commute as well. Because the homogeneous elements span $O_q(G^+/N)^{\otimes 2}$, we have that any multiplicative set generated by $A_\lambda$'s for dominant weights $\lambda$, is an Ore set. 
\end{proof}

We can now establish the following proposition:
\begin{proposition}\label{prop:generic-position-flags-iso}
Let $\rho := \sum\varpi_i$ denote the sum of all fundamental $G$-weights. The map 
    \begin{equation}\label{eq:generic-position-flags-iso}
        \begin{aligned}
             \eta: O_q(G^+/N)^{\otimes 2}[A_\rho^{-1}] &\to O_q(T) \otimes O_q(G^+)
        \end{aligned}
    \end{equation}
        is an isomorphism of algebras in $\Rep_qG\boxtimes \Rep_qT\boxtimes \Rep_qT^{bop}$.
\end{proposition}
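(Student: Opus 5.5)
First, check that $\eta$ extends to the localisation. By Lemma \ref{lemma:Alambda-good-properties}, $\{A_\rho^n\}$ is an Ore set. By \eqref{eq:eta-of-digon-element}, $\eta(A_\rho)=q^{-(\rho,\rho)}\chi_{w_0\rho}\otimes 1$, which is invertible in $O_q(T)\otimes O_q(G^+)$ with inverse $q^{(\rho,\rho)}\chi_{-w_0\rho}\otimes 1$. The universal property of Ore localisation then gives a unique algebra map $\eta: O_q(G^+/N)^{\otimes2}[A_\rho^{-1}]\to O_q(T)\otimes O_q(G^+)$. This map is still equivariant, because localising at an invariant homogeneous element is compatible with the $\Rep_qG\boxtimes\Rep_qT\boxtimes\Rep_qT^{bop}$ structure.

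Injectivity follows from Lemma \ref{lemma:non-localised-GN2-to-TxG-map}. The source is an Ore localisation of a domain, so every element has the form $xA_\rho^{-n}$. If $\eta(x)\eta(A_\rho)^{-n}=0$, then $\eta(x)=0$, hence $x=0$.

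Surjectivity is the real content. Since $A_{\lambda+\mu}$ is a scalar multiple of $A_\lambda A_\mu$ (compare images under $\eta$ and use injectivity), every $A_\lambda$ divides a power of $A_\rho$, because $n\rho-\lambda$ is dominant for $n\gg0$. So all $\chi_{\pm w_0\lambda}\otimes 1$ lie in the image, and with them all of $O_q(T)\otimes 1$, since $w_0$ permutes the weight lattice and the dominant weights generate the lattice as a group. It remains to hit $1\otimes O_q(G^+)$. The image already contains $1\otimes\psi^\mu=\eta(1\otimes\psi^\mu)$ for all $\psi\in V_\mu^*$, which are the matrix coefficients $\psi\boxtimes v_\mu$ against highest weight vectors. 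Multiplying $\eta(\varphi^\lambda\otimes1)$ by $\chi_{\lambda}\otimes 1$ produces $1\otimes(Y\rhd\varphi^\lambda)$, which is $\varphi\boxtimes v_{w_0\lambda}$ up to a scalar, the lowest-weight matrix coefficients.

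So the image contains the subalgebra $\mathcal{B}\subset O_q(G^+)$ generated by all highest- and lowest-weight matrix coefficients, that is, by $O_q(G^+/N)$ and $O_q(G^+/N_-)$. The main obstacle is to show that $\mathcal{B}=O_q(G^+)$ after inverting nothing further. I would prove this by a quantum big-cell/Gauss decomposition argument. For each fundamental $\varpi_i$, consider the products $(\varphi\boxtimes v_{\varpi_i})(\varphi'\boxtimes v_{w_0\varpi_i'})$. By \eqref{eq:Gp-multiplication} and Peter--Weyl, these span $V^*_\lambda\otimes V^*_\mu\boxtimes \sigma(v_{\lambda}\otimes v_{w_0\mu})$. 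Then use the fact that $U_q(\g)$ acting on $v_\lambda\otimes v_{w_0\mu}$ generates all of $V_\lambda\otimes V_\mu$. This is the quantum version of the classical fact that a highest weight vector tensor a lowest weight vector is cyclic in $V_\lambda\otimes V_\mu$.

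The issue is that $\mathcal{B}$ is only stable under the right co-regular action, not the left one. For the left action I would instead argue by comparing graded dimensions. Filter by the dominance order on $\lambda$ and show that the image of $V^*_\lambda\otimes V^*_{\mu}\boxtimes v_\lambda\otimes v_{w_0\mu}$ projects nontrivially onto each isotypic component $V^*_\nu\boxtimes V_\nu$ with $\nu=\lambda+w_0\mu$ and weight vector $\lambda+w_0\mu$ in $V_\nu$. Then run an induction on $\nu$, using that every weight vector of every $V_\nu$ arises this way (extremal weights first, then lower ones). Classically this is the statement that the functions $\Delta_{\varpi_i}$ and $\Delta_{w_0\varpi_i}$ generate $\CC[G]$ modulo the $T$-translations. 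I expect this is where the bulk of the work lies. If direct generation proves too messy, my fallback is a character comparison: both sides are $(\lambda,\mu)$-graded with finite-dimensional pieces, and the specialisation at $q=1$ is the classical isomorphism $(G/N\times G/N)^{gen}\cong T\times G$. A flatness argument would then upgrade injectivity to bijectivity.
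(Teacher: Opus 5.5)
Your outline follows the paper's proof up to the last step. Well-definedness on the Ore localisation, injectivity, and $O_q(T)\otimes 1\subset\mathrm{im}\,\eta$ (via $A_{\lambda+\mu}\propto A_\lambda A_\mu$ and invertibility of $\eta(A_\rho)$) are argued as there. You also reduce surjectivity to the right claim: that $O_q(G^+/N)$ and $Y\rhd O_q(G^+/N)$ generate $O_q(G^+)$.

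That claim is the whole content of surjectivity, and you do not prove it, so as written there is a genuine gap. The paper does not prove it either; it imports it from \cite[Section 9]{Joseph}, Joseph's factorisation $R_q[G]=R^+R^-$. None of your three sketches closes it:
\begin{itemize}
\item The cyclicity argument fails for the reason you give yourself: $\mathcal{B}$ is not stable under the left co-regular action.
\item The induction on $\nu$ only restates what must be shown, namely that every weight vector of every $V_\nu$ is reached. A nontrivial projection onto each isotypic component would not suffice.
\item The fallback is not set up correctly. The $(\lambda,\mu)$-weight spaces of $O_q(T)\otimes O_q(G^+)$ are infinite-dimensional, since a fixed $T$-weight occurs in infinitely many $V_\nu$. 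You would have to refine by $U_q(\g)$-isotypic type, and the specialisation/flatness step would also need integral forms you have not built.
\end{itemize}

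The idea you are missing is that stability under the left action is unnecessary: vary $(\lambda,\mu)$ over all dominant weights, not just fundamental ones.
\begin{itemize}
\item By \eqref{eq:Gp-multiplication} and the coend form of Peter--Weyl, the products $(\varphi\boxtimes v_\lambda)(\varphi'\boxtimes v_{w_0\mu})$, with $\varphi\in V_\lambda^*$ and $\varphi'\in V_\mu^*$, span $\bigoplus_\nu V_\nu^*\boxtimes\{\pi(x):\pi\in\Hom_{U_q(\g)}(V_\mu\otimes V_\lambda,V_\nu)\}$. Here $x:=\sigma(v_\lambda\otimes v_{w_0\mu})$.
\item The vector $x$ has weight $\lambda+w_0\mu$ and is cyclic, being the image of the cyclic vector $v_\lambda\otimes v_{w_0\mu}$ under the module isomorphism $\sigma$. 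Hence $\pi\mapsto\pi(x)$ is injective from $\Hom_{U_q(\g)}(V_\mu\otimes V_\lambda,V_\nu)$ into $V_\nu[\lambda+w_0\mu]$.
\item Fix $\nu$ and a weight $\gamma$ of $V_\nu$. Take $\mu$ sufficiently dominant and put $\lambda:=\gamma-w_0\mu$, which is then dominant.
\item Quantum tensor multiplicities equal the classical ones. By the Brauer--Klimyk formula in the stable range, $V_{-w_0\mu}\otimes V_\nu\cong\bigoplus_{\gamma'}V_{-w_0\mu+\gamma'}^{\oplus\dim V_\nu[\gamma']}$. So the source has dimension $\dim V_\nu[\gamma]$ and the injection is onto.
\end{itemize}
Thus every $V_\nu^*\boxtimes V_\nu[\gamma]$ already lies in the span of such two-fold products, and both kinds of factor are in $\mathrm{im}\,\eta$ by your earlier steps. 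Hence $\mathcal{B}=O_q(G^+)$. With this argument, or simply the citation to Joseph, your proof is complete and coincides with the paper's.
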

\begin{proof}
    The earlier discussion makes the localisation and the map well-defined. Note that because $\eta$ is an algebra map and $\eta( A_{\rho}^{-1}) \neq 0$, the injectivity is preserved by localisation. Recall that $\eta(A_\lambda) = q^{-(\lambda,\lambda)}\chi_{w_0\lambda} \otimes 1$ and that localising at a product is equivalent to localising at each of its factors. Moreover, because $A_{\lambda+\mu}$ is a scalar multiple of $A_\lambda A_\mu$ and because the fundamental weights form a basis of the weight lattice, it follows that $O_q(T) \otimes 1$ is contained in the image of $\eta$. Its surjectivity follows from the fact that $O_q(G^+/N)(Y \rhd O_q(G^+/N))$ generates $O_q(G^+)$, \cite[Section 9]{Joseph}.
\end{proof}

We are now able to prove Proposition \ref{prop:G-invariants-equivalence}.
\begin{proof}[Proof of Prop. \ref{prop:G-invariants-equivalence}]
Because $\Rep_qG$ is semisimple the functor  of taking invariants is exact. In order to prove that its monadic lift $\mathcal{G}$ is an equivalence, it suffices to show it is conservative. By Proposition \ref{prop:generic-position-flags-iso}, there exists an injective morphism $\phi: O_q(G^+) \to  O_q(G^+/N)^{\otimes2}[A_\rho^{-1}]$, identifying $O_q(G^+)$ as a subalgebra. Consequently, one can consider the following commuting diagram
\begin{equation}
\begin{tikzcd}
O_q(G^+/N)^{\otimes2}[A_\rho^{-1}]-\mod_{G\times T\times T^{bop}}
  \arrow[r, "\mathcal{G}"]
  \arrow[d, "\mathcal{F}"']
&
(O_q(G^+/N)^{\otimes2}[A_\rho^{-1}])^{U_q(\g)}-\mod_{T\times T^{bop}}
  \arrow[d, "\mathcal{F}'"]
\\
O_q(G^+)-\mod_{G\times T\times T^{bop}}
  \arrow[r, "\mathcal{G}'"]
&
\Rep_q{T\times T^{bop}}
\end{tikzcd}
\end{equation}
where the horizontal functors are the ones taking $U_q(\g)$-invariants, and the vertical ones forget part of the algebra to a subalgebra. Because forgetful functors are conservative, it suffices to show that $\mathcal{G}'$ is conservative. This is immediate because $\mathcal{G}'$ is an equivalence with inverse given by $V \mapsto O_q(G^+) \otimes V$.
\end{proof}

\subsubsection{Localisation for general surfaces}
A major consequence of Proposition \ref{prop:G-invariants-equivalence} is that we can apply excision to localised internal skein algebras without $G$-gates and therefore to our co-completed and localised skein categories, so that:

\begin{corollary}\label{cor:general-surfaces-invertibility}
  Let $\Sigma$ be an arbitrary redecorated surface, denote by $\Delta$ a collection of arcs on $\Sigma$ connecting boundary $T$ regions, by  $\widehat{\SkCat}(\Sigma)[\Delta^{-1}]$ the localisation of the skeins $A_\rho$ situated along each arc, and by $\Sigma_\triangle$ the Weyl defect surface where we composed the parabolic defects locally along each arc, as in Definition \ref{def:skein-cluster-functor}. 
  Then we have an equivalence of categories,
\begin{align}
         \widehat{\SkCat}(\Sigma)[\Delta^{-1}] \cong \widehat{\SkCat}( \Sigma_\triangle ),
     \end{align}
    induced by an isomorphism of internal skein algebras
    \begin{equation}
      \SkAlg^{int}(\Sigma)[\Delta^{-1}] \cong \SkAlg^{int}(\Sigma_\triangle).
    \end{equation}
\end{corollary}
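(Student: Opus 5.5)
We reduce to the local digon statement, Theorem \ref{thm:borel-almost-invertible}, and then globalise using excision for internal skein algebras with only $T$-gates. The basic mechanism is Proposition \ref{prop:G-invariants-equivalence}. After localising at $A_\rho$, taking $U_q(\g)$-invariants identifies $O_q(G^+/N)^{\otimes 2}[A_\rho^{-1}]$-modules in $\Rep_qG\boxtimes\Rep_qT\boxtimes\Rep_qT^{bop}$ with modules for the invariant subalgebra in $\Rep_qT\boxtimes\Rep_qT^{bop}$. So in the localised setting, monadic reconstruction works with gates placed only in the $T$-regions. This is what makes the localised categories glue well without $G$-gates.

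\textbf{Step 1: the digon.} We first establish the case where $\Sigma=\PP_2$ and $\Delta$ is the single arc. By \eqref{eq:digon-skein-isomorphism} and Proposition \ref{prop:generic-position-flags-iso}, the localised internal skein algebra with $G$- and $T$-gates is $O_q(T)\otimes O_q(G^+)$. Taking $G$-invariants, using the commuting square from the proof of Proposition \ref{prop:G-invariants-equivalence}, gives a $T\times T^{bop}$-equivariant algebra isomorphic to $O_q(T)$, twisted by $\theta$ on one factor. We compare this with $\SkAlg^{int}$ of the Weyl disk $\FullWeylAlg$. By Corollary \ref{corollary:composing-locally-algebras} and Lemma \ref{lemma: skein passing through point defect}, the two $G$-bubbles can be removed, leaving a $T$-disk crossed by $\T_{w_0}$. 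Lemma \ref{lemma:weyl-defect-weights} then shows its internal algebra is $\bigoplus_\lambda \chi_\lambda\boxtimes\chi_{w_0\lambda}$, i.e. $O_q(T)$ with the $\theta$-twisted bimodule structure. We check that the skein $A_\rho$ maps to the through-strand $\chi_{w_0\rho}$, matching \eqref{eq:eta-of-digon-element}. Monadic reconstruction on both sides then gives the digon equivalence.

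\textbf{Step 2: excision.} For general $\Sigma$, we cut along arcs transverse to the parabolic defects, away from the arcs of $\Delta$. This exhibits $\Sigma$ as a gluing of a neighbourhood digon $\PP_2$ for each arc in $\Delta$ and a complementary surface $\Sigma^\circ$, glued along intervals in $T$-regions. We perform the same decomposition for $\Sigma_\triangle$, with the Weyl disks replacing the digons and $\Sigma^\circ$ unchanged.

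Excision expresses $\widehat{\SkCat}$ of each glued surface as a relative tensor product of the pieces over $\widehat{\SkCat}$ of the gluing strips. These strips are $\Rep_qT$-strips when we cut in $T$-regions. Localising at $A_\rho$ along each arc is an Ore localisation by Lemma \ref{lemma:Alambda-good-properties}, since $A_\rho$ $q$-commutes with homogeneous elements. Ore localisation is compatible with relative tensor products because it is a flat base change. Hence $\widehat{\SkCat}(\Sigma)[\Delta^{-1}]$ is the relative tensor product of the localised digon categories with $\widehat{\SkCat}(\Sigma^\circ)$. Step 1 then identifies each factor with its Weyl counterpart, which gives the equivalence of categories. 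Applying the same gluing to internal skein algebras, which are computed as coends \eqref{eq:its-a-coend} over $T$-gates, gives the algebra isomorphism, and the equivalence is induced by it.

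\textbf{Main obstacle.} The hard part is justifying excision with only $T$-gates. Before localisation, $\SkCat$ of a digon is not generated by its $T$-gates, so monadicity fails; this is precisely where Proposition \ref{prop:G-invariants-equivalence} is needed. We must also check that localisation commutes with the colimit defining excision, in the sense that the localised subcategory of the glued surface is the relative tensor product of the localised subcategories. The first thing to try is to express localisation as tensoring with the idempotent algebra $\SkAlg^{int}(\PP_2)[A_\rho^{-1}]$ over $\SkAlg^{int}(\PP_2)$. Such an idempotent base change commutes with relative tensor products over the strip categories, which should settle the compatibility.
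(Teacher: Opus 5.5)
\paragraph{Review.} Your strategy is the one the paper sketches: settle the digon, then globalise by excision, using Proposition \ref{prop:G-invariants-equivalence} to get rid of $G$-gates. However, Step 2 relies on a decomposition that does not exist.

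Each arc of $\Delta$ leaves a boundary $T$-region through a parabolic defect, runs through the $G$-region, and exits through a second parabolic defect. So the frontier of any digon neighbourhood of it consists of arcs parallel to it, each crossing $T\,|\,\widetilde{\D_B}\,|\,G\,|\,\widetilde{\D_B}\,|\,T$. Since the $T$-regions of $\Sigma$ are neighbourhoods of marked points and punctures, a cut lying inside them only splits off a piece of $T$-region. It can never separate a digon from the rest of the $G$-region.

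Consequently:
\begin{itemize}
\item Your gluing strips are not $\Rep_qT$-strips; the relative tensor product is taken over a strip category containing $\Rep_qG$.
\item The localised digon must therefore enter as a module category over that strip, i.e.\ with its lateral $G$-boundary (a $G$-gate) still present.
\end{itemize}

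Step 1 does not provide this. It passes to $U_q(\mathfrak{g})$-invariants immediately and compares only $T\times T^{bop}$-algebras. It also discards the two $G$-regions of the Weyl disk. Those regions are not interior bubbles: they meet the boundary, so Corollary \ref{corollary:composing-locally-algebras} does not remove them. They are exactly where that piece has to be reglued to the rest of $\Sigma_\triangle$.

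\paragraph{What is missing.} You need the $G$-equivariant form of the local comparison. Its key input is the isomorphism \eqref{eq:generic-invertiblity-for-int-algs}, with the $G$-gate open, from the proof of Theorem \ref{thm:borel-almost-invertible}, used on each side of the digon. This identifies the localised digon with its Weyl counterpart compatibly with the actions of the lateral $G$-strips.

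With that in hand, the argument runs in this order:
\begin{itemize}
\item Glue along the $G$-intervals by ordinary excision. Your base-change argument for commuting the localisation at $A_\rho$ with the relative tensor product can be run over these strips.
\item Only after gluing, invoke Proposition \ref{prop:G-invariants-equivalence} to close the $G$-gates and obtain the $T$-gated algebra isomorphism.
\end{itemize}
On $\Sigma_\triangle$ you can cut along $T$-intervals transverse to Weyl defects, as the proof of the theorem does, but there is no corresponding cut on $\Sigma$.

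\paragraph{A further caveat.} Lemma \ref{lemma:Alambda-good-properties} gives the Ore property only inside $O_q(G^+/N)^{\otimes 2}$. In $\SkAlg^{int}(\Sigma)$ the skein $A_\rho$ along one arc does not $q$-commute with skeins along arcs crossing it. So $\SkAlg^{int}(\Sigma)[\Delta^{-1}]$ should be defined through the localised category, not justified by appeal to that lemma.
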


\subsection{Proof of Theorem \ref{thm:borel-almost-invertible} on generic invertibility}\label{sec:proof-of-generic-invertibility}
We can now prove Theorem \ref{thm:borel-almost-invertible}, concerning the relationship between Weyl defects and pairs of parabolic defects.

\begin{proof}
Following \cite{JLSS2021} and \cite{Brown_Jordan_2025}, by monadic reconstruction and Proposition \ref{prop:G-invariants-equivalence} one has that
\begin{equation}
    \begin{aligned}
    \widehat{\SkCat(\PP_2)^{loc}} &\cong \SkAlg^{int}_{G\times T\times T^{bop}}(\PP_2)[A^{-1}_\rho]-\mod_{G\times T \times T^{bop}}\\ &\cong (\SkAlg^{int}_{G\times T \times T^{bop}}(\PP_2)[A^{-1}_\rho])^{U_q(\mathfrak{g})}-\mod_{T\times T^{bop}} .
\end{aligned}
\end{equation}
Localising and taking $U_q(\mathfrak{g})$ invariants are commuting operations, because $A_\rho$ is $U_q(\mathfrak{g})$-invariant. Taking $G$-invariants of an internal skein algebra corresponds to closing the corresponding $G$-gate. We have
    \begin{equation}
    (\SkAlg^{int}_{G\times T\times T^{bop}}(\PP_2)[A^{-1}_\rho])^{U_q(\mathfrak{g})} \cong 
    \SkAlg^{int}_{T\times T^{bop}}(\PP_2)[A^{-1}_\rho].
\end{equation} 
By the excision property of skein categories we can write
\begin{equation}
 \widehat{\SkCat}(\FullWeylCat) \cong \widehat{\SkCat}(\LeftWeylCat)\, \mathbin{\mathop{\boxtimes}\limits_{\widehat{\SkCat}(\WeylCat)}}\, \widehat{\SkCat}(\RightWeylCat).
 \end{equation}
Again, by monadic reconstruction we have the following equivalence
\begin{equation}
    \widehat{\SkCat}(\LeftWeylCat) \cong \SkAlg^{int}(\LeftWeylAlgGTT)-\mod_{G\times T\times T^{bop}}.
\end{equation}
Note that there's an injective homomorphism of algebra objects
\begin{equation}
    \SkAlg^{int}_{G\times T\times T^{bop}}(\PP_2) \hookrightarrow \SkAlg^{int}(\LeftWeylAlgGTT)
\end{equation} whose image is generated by skeins passing to the left of the codimension-2 defect. By Lemma \ref{lemma: skein passing through point defect}, the skeins not in the image are those passing through Weyl defect (i.e. right of the codimension-2 defect) and oriented towards it, labelled by non-dominant $G$-weights. These are exactly the elements introduced by localisation, because the fundamental weights form a basis of the weight lattice and multiplication is given by $A_\lambda A_\mu = q^{2(\lambda,\mu)}A_{\lambda+\mu}$.
It follows that
\begin{equation}\label{eq:generic-invertiblity-for-int-algs}
    \SkAlg^{int}(\LeftWeylAlgGTT) \cong \SkAlg^{int}_{G\times T \times T^{bop}}(\PP_2)[A_\rho^{-1}].
\end{equation}
Analogous arguments holds when the $G$-region is on the right.
By Proposition \ref{prop:G-invariants-equivalence},  closing the $G$-gate gives us

\begin{equation}
    \begin{aligned}
    \widehat{\SkCat}(\FullWeylCat)   
    &\cong \SkAlg^{int}(\LeftWeylAlgTT)-\mod_{ T\times T^{bop}} \mathbin{\mkern+20mu\mathop{\boxtimes}\limits_{\mathclap{\SkAlg^{int}(\WeylAlg)-\mod_{ T\times T^{bop}}}}\mkern+20mu} \SkAlg^{int}(\RightWeylAlgTT)-\mod_{ T\times T^{bop}}\\
    &\cong \left( \SkAlg^{int}(\LeftWeylAlgTT) \mathbin{\mathop{\otimes}\limits_{\SkAlg^{int}(\WeylAlg)}} \SkAlg^{int}(\RightWeylAlgTT) \right)-\mod_{T \times T^{bop}}\\
    &\cong \SkAlg^{int}(\FullWeylAlg)-\mod_{T \times T^{bop}},
    \end{aligned}
\end{equation}
where we used that the various algebras in play are braided commutative in $\Rep_qT \boxtimes \Rep_qT^{bop}$, see Lemma \ref{lemma:d1-is-commutative}.
The final result then follows from the isomorphism of algebra objects
\begin{equation}
    \SkAlg^{int}(\FullWeylAlg) \cong \SkAlg^{int}_{T\times T^{bop}}(\PP_2)[A^{-1}_\rho].
    \end{equation}
Indeed, the algebra on the right hand side before localisation is given by $\bigoplus_{\lambda \in \Lambda^+}\chi_\lambda  \boxtimes \chi_{-w_0\lambda}$ with the element $A_\lambda$ in a single summand and multiplication given by $A_\lambda A_\mu = q^{2(\lambda,\mu)}A_{\lambda+\mu}$.
Inverting $A_\rho$ corresponds to summing over the whole weight lattice, thereby recovering the internal skein algebra on the left hand side as a consequence of Lemma \ref{lemma: skein passing through point defect}.
\end{proof}

\begin{remark}\label{rmk:parabolics-later}
    Theorem \ref{thm:borel-almost-invertible} and the discussion in Section \ref{sec:composition-junction} give us a model for interfaces between the opposite Borel defects.
We expect this to generalise to pairs of parabolic subgroups sharing a Levi subgroup, but leave this to future publications.
\end{remark}

\section{Connections to quantum cluster algebras}\label{sec:quantum-cluster-algebras}

We use the theory developed above to explore the relationship between internal skein algebras of decorated surfaces and quantum cluster algebras of marked surfaces.
While the full story is out of scope for this article, we recover the results of \cite{IY22} for $G = \SL_3\CC$ and $\Sigma = \PP_3$ and generalise the reduced stated\footnote{Here we rely on the correspondence between internal and stated skeins \cite{Haioun_2022}.} skein algebras appearing in \cite{IY22,Muller_2016}, see also \cite{Ishibashi_Yuasa_2025,Ishibashi_Kano_Yuasa_2025,Ishibashi_Oya_2026} for $\mathfrak{sp}_4$ and development of the general correspondence.

Marked points in the quantum cluster setting correspond to contractible $T$-regions at the boundary with a single gate. In \cite{IY22} it is shown that the $\mathfrak{sl}_3$ reduced stated skein algebra has a quantum cluster structure. We note that this cluster structure is the one obtained from a natural projection on the level of the moduli spaces introduced in \cite{FG06,Goncharov_Shen_2020}.

\subsection{Background on quantum cluster algebras}\label{sec:clusters-background}

We briefly recall the definition of a quantum algebra introduced in \cite{BZ05} and broadly follow the presentation from \cite{SS25}, while focusing on the A-variables. Recall that decorated character varieties have associated cluster X-varieties and cluster A-varieties.  Any cluster X-variety admits a canonical quantisation in terms of the quivers underlying the cluster construction. On the other hand, an a-priori non-canonical non-degenerate ensemble map is needed to quantise the A-variety.
Goncharov and Shen describe one such map using a natural projection on the level of the moduli spaces of local systems. 
\begin{definition}
    A \emph{\bf seed} is the datum $(I,I_{\ast}, \Lambda, \langle.,.\rangle,\{e_i\}_{i\in I})$ where
    \begin{itemize}
      \item $I$ is a finite set with a distinguished subset $I_{\ast} \subset I$;
      \item $\Lambda$ is a lattice with a skew-symmetric bilinear form $\langle \cdot,\cdot \rangle : \Lambda \times \Lambda \to \frac{1}{2} \ZZ$;
      \item $\{e_i\}_{i \in I}$ is a basis of $\Lambda$, with $\epsilon_{i,j} := \langle e_i,e_j\rangle \in \ZZ$ for $i,j \in I \backslash I_{\ast}$.
    \end{itemize}
    We will sometimes emphasize the lattice by writing $\langle\cdot,\cdot\rangle_\Lambda$.
    The \emph{\bf quiver associated to a seed}, $Q$, has vertex set $V(Q) = I$ and adjacency matrix $\epsilon_{ij} = \langle e_i,e_j\rangle$. When $\langle e_i,e_j\rangle$ is half-integer the corresponding arrow is instead weighted by $\langle e_i,e_j\rangle$.
 One can likewise associate a seed to a appropraite quiver. The sets $I_\ast$ and $I\backslash I_\ast$ are referred to as the frozen and mutable vertices, respectively.
 \end{definition}

\begin{definition}
    Given a seed $ \Theta = (I,I_{\ast}, \Lambda, \langle.,.\rangle,\{e_i\}_{i\in I})$, an extension of $\Theta$ to a \emph{\bf compatible pair} is the data of a lattice $\Xi$ such that 
    \begin{equation}
        \Lambda \subseteq \Xi\subset \Lambda \otimes_{\ZZ} \mathbb{Q},
    \end{equation}
    and a basis $\{\xi_i\}_{i\in I}$ for $\Xi$ such that 
    \begin{equation}
        \langle e_i, \xi_j\rangle = \delta_{ij} \text{ for all } i \in I\backslash I_\ast \text{ and } j \in I.
    \end{equation}
    Here the skew-form $\langle.,.\rangle$ on $\Xi$ is obtained from $\langle.,.\rangle_{\Lambda}$ by extension of scalars.
\end{definition}
A compatible pair defines an integer \emph{ensemble matrix} $b$ by writing the elements $\{e_i\}_{i\in I}$ in terms of the basis $\{\xi_i\}_{i\in I}$. We write
\begin{equation}
    e_j = \sum_{i\in I} b_{ij}\xi_i, \qquad \text{  for all $j \in I$}.
\end{equation}
Note that for any integer matrix $b$ that is invertible over $\mathbb{Q}$, one can define a compatible pair by defining the lattice $\Xi$ as spanned by the elements
\begin{align}\label{eq: xi in terms of e}
    \xi_j =\sum_{i\in I} (b^{-1})_{ij}e_i, \qquad \text{  for all $j \in I$}.
\end{align}
This fact was already implicit in \cite{SS25}.
Additionally, for a given compatible pair, one obtains a $\mathbb{Q}$-valued skew-symmetric matrix $\kappa$, whose entries are given by 
\begin{equation}
    \kappa_{ij} := (\xi_i,\xi_j),\qquad \text{  for all $i,j \in I$}.
\end{equation}
Plugging in \eqref{eq: xi in terms of e}, we obtain and expression of $\kappa$ in terms of the adjacency and ensemble matrices: $\kappa = (b^{-1})^T\epsilon b^{-1}$.
Given a seed with corresponding quiver $Q$ and compatible pair $(\Xi,\{\xi_i\}_{i\in I})$ we define the quantum torus
\begin{equation}
    \mathcal{T}_Q^\mathcal{A} = \mathcal{K}\langle Y_{\pm\xi_i} | i \in I\rangle/\langle q^{\kappa_{ij}}Y_{\xi_i}Y_{\xi_j} = q^{\kappa_{ji}}Y_{\xi_j}Y_{\xi_i}  \rangle.
\end{equation}
 Here, $\mathcal{K}$ denotes the ring one chooses to work over. A common choice is $\mathcal{K} = \ZZ[q^{\pm1/d}]$, where $d$ denotes an integer such that $d\kappa_{ij} \in \ZZ$ for all $i,j \in I$. We will always choose $d$ to be minimal. We can equivalently define $\mathcal{T}_{Q}^{\mathcal{A}}$ as the $\mathcal{K}$-algebra generated by the elements $Y_{\lambda}$ for $\lambda \in \Xi$ and multiplication given by 
\begin{equation}
    q^{\langle\lambda,\mu\rangle}Y_\lambda Y_{\mu}= Y_{\lambda+\mu}.
\end{equation}
\begin{definition}
    The elements $Y_{\xi_i} \in \mathcal{T}_Q^{\mathcal{A}}$ are called the quantum cluster $\mathcal{A}$-variables associated to the compatible pair $(\Xi,\{\xi_i\})$.
\end{definition}
Consider a seed $\Theta$, with quiver $Q$ and a compatible pair $(\Xi, \{\xi_i\})$. For an element $k \in I\backslash I_{\ast}$, one introduces a new seed $\Theta'$ with quiver denoted as $\mu_k(Q)$ and compatible pair $(\Xi', \{\xi_i'\})$. They have the same underlying lattices and skew form and the new bases are given by
\begin{equation}
  e_i' :=  \begin{cases}
      -e_i  & \text{ if }i = k,\\
      e_i + [\epsilon_{ik}]_{+}e_k  & \text{ if }i \neq k,
    \end{cases} \qquad \text{ and } \qquad  \xi_i' := \begin{cases}
      -\xi_i + \sum_{j\neq k } [b_{jk}]_+\xi_j  & \text{ if }i = k,\\
      \xi_i   & \text{ if }i \neq k,
    \end{cases},
\end{equation}
here $[a]_+ := \max(a,0)$. The quiver $\mu_k(Q)$ is called the mutation of $Q$ in the direction $k$. Consequently, the new ensemble matrix is given by 
\begin{align}\label{eq: mutated b}
    b'_{ij} = \begin{cases}
       -b_{ij} & \text{ if } k=i \text{ or } k=j,\\
       b_{ij} + \frac{|b_{ik}|b_{kj}+ b_{ik}|b_{kj}|}{2} & \text{ otherwise.}
    \end{cases}
\end{align}
One associates to a quiver mutation an isomorphism on the level of the quantum tori,
\begin{equation}
\begin{aligned}
    \mu_k^q: \Frac(\mathcal{T}_{\mu_k(Q)}^\mathcal{A})  &\to \Frac(\mathcal{T}_{Q}^\mathcal{A})\\
    Y_{\xi_k'} &\mapsto Y_{-\xi_k + \xi_k^+} +  Y_{-\xi_k + \xi_k^-}\\
    Y_{\xi_i'} &\mapsto Y_{\xi_i} \hspace{29mm} \text{ for } i \neq k,
\end{aligned}
\end{equation}
    where
    \begin{equation}
        \xi_k^{\pm} = \sum_{j\in I}[\pm b_{jk}]_{+}\xi_j.
    \end{equation}
    By the quantum Laurent phenomenon \cite{BZ05}, any quantum $\mathcal{A}$-variable defined through finitely many mutations, remains a Laurent polynomial in the original quantum torus. Hence, we can state the following definition.
\begin{definition}
    The quantum cluster algebra $\mathscr{A}_{Q,b}^q$ is given by the subalgebra of $\mathcal{T}_Q^{\mathcal{A}}$ generated by all quantum $\mathcal{A}$-variables. We assume the frozen variables are invertible.
\end{definition}

\subsection[Computations for SL3]{Computations for $\SL_3$}\label{sec:computations-SL3-PGL3}

Our goal is to compare the quantum cluster structures on the moduli spaces $\mathcal{A}_{\SL_3,\Sigma}$ and $\mathcal{P}_{\PGL_3,\Sigma}$ introduced by Fock-Goncharov \cite{FG06} and Goncharov-Shen \cite{Goncharov_Shen_2020} to the internal skein algebras of appropriate decorated surfaces.
We focus on $\Sigma$ being the triangle with three marked points, so that the decorated surface is $\PP_3$, the disk with three contractible $T$-regions on its boundary, all separated by parabolic defects $\widetilde{\D}_B$ from a single $G$ region. In this section $\mathcal{K} = k(q^{1/6})$ for a field  $k$ of characteristic zero and $q^{1/6}$ generic. 

The initial cluster seed is given by the quiver in Figure \ref{fig:SL3-quiver}. Square vertices indicate frozen variables and circular vertices correspond to mutable ones. The dashed arrows on the edges of the triangles are weighted by $1/2$.
\begin{figure}
    \centering
    \includegraphics{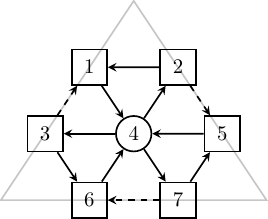}
    \caption{Quiver for $\SL_3$}
    \label{fig:SL3-quiver}
\end{figure}

Let $i(l)$ denote the index of a frozen variable $l$, $C$ denote the Cartan matrix of $\SL_3\CC$, and $\hat{C}$ be the $I\times I$ matrix with coefficients
\begin{equation}
\hat{C}_{l_1l_2} = 
    \begin{cases}
    \frac{1}{2} C_{i(l_1)i(l_2)} & \text{if } l_1,l_2 \in I_\ast \text{ lie on the same edge,}\\
    0 &  \text{else.}
\end{cases}
\end{equation}
It is shown in \cite[Lemma 4.16]{SS25} that the sum of $\hat{C}$ and the adjacency matrix defines an ensemble matrix,
\begin{equation}
    b = \epsilon + \hat{C},
\end{equation}
and hence a compatible pair for the initial cluster seed.
This matrix is obtained from expressing a natural projection map between the moduli spaces $\mathcal{A}_{G',\Sigma} \to \mathcal{P}_{G,\Sigma}$, in terms of the initial cluster variables, as described in \cite[Section 13]{Goncharov_Shen_2020}.

The quiver in Figure \ref{fig:SL3-quiver} has only one mutable vertex and therefore only one cluster chart $\mathcal{T}_{\mu_4(Q)}^\mathcal{A}$ beyond the initial one.
This second chart has generators $Y_{\xi_i'}$ whose q-commutation relations can be expressed in terms of $\kappa'$, which in turn can be computed from the mutated ensemble matrix $b'$ described in \eqref{eq: mutated b}. The mutated variables expressed in $\mathcal{T}_Q^{\mathcal{A}}$ are given by 
\begin{equation}
\begin{aligned}
    Y_{\xi_4}'&:= \mu_k^q(Y_{\xi_4'}) = Y_{-\xi_4 + \xi_1 +\xi_5 + \xi_6} + Y_{-\xi_4 + \xi_2 +\xi_3 + \xi_7} \in \mathcal{T}_Q^{\mathcal{A}},\\
    Y_{\xi_i}' &:= \mu_k^q(Y_{\xi_i'}) = Y_{\xi_i} \text{ for } i \in I_\ast .
\end{aligned}
\end{equation}
Finally, the quantum cluster algebra associated to the triangle for $\SL_3$ is given by
\begin{equation}\label{eq: quantum cluster algebra}
     \mathscr{A}^q_{\SL_3, \Delta} = \mathcal{K}\langle Y_{\pm\xi_i}, Y_{\xi_4}, Y_{\xi_4}' | i \in I_\ast\rangle/\sim
\end{equation}
with relations given by
\begin{equation}\label{eq: quantum cluster relations}
\left\{
\begin{aligned}
Y_{\xi_i}Y_{-\xi_i}
&= Y_{-\xi_i}Y_{\xi_i} =1\text{ for } i \in I_\ast,\\
q^{\kappa_{ij}}Y_{\xi_i}Y_{\xi_j}
&= q^{\kappa_{ji}}Y_{\xi_j}Y_{\xi_i} \text{ for } i,j \in I,\\
q^{\kappa'_{i4}}Y_{\xi_i}Y_{\xi_4}'
&= q^{\kappa'_{4i}}Y_{\xi_4}'Y_{\xi_i}\text{ for } i \in I_\ast,\\
Y_{\xi_4}Y_{\xi_4}'
&= q^{1/3}Y_{\xi_1}Y_{\xi_5}Y_{\xi_6}
 + q^{-1/3}Y_{\xi_2}Y_{\xi_3}Y_{\xi_7},\\
 Y'_{\xi_4}Y_{\xi_4} &= q^{-2/3}Y_{\xi_1}Y_{\xi_5}Y_{\xi_6}
 + q^{2/3}Y_{\xi_2}Y_{\xi_3}Y_{\xi_7}.
\end{aligned}
\right\}.
\end{equation}

Denote by $\PP_\triangle$ the decorated disk where each pair of parabolic defects is replaced by the Weyl defect as in Theorem \ref{thm:borel-almost-invertible}, see Figure \ref{fig:PP-triangle}.
We have the following algebra isomorphism, recovering the result from \cite{IY22}.
\begin{theorem}
    For $\PP_\triangle$ with a single gate in every $T$-region we have
    \begin{equation}
        \SkAlg^{int}_{\G}(\PP_\triangle) \cong \mathscr{A}^q_{\SL_3, \Delta}.
    \end{equation}
\end{theorem}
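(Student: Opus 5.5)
We will compute $\SkAlg^{int}_{\G}(\PP_\triangle)$ by excision, gluing together the local pieces that Theorem \ref{thm:borel-almost-invertible} and Corollary \ref{cor:general-surfaces-invertibility} already identify. We then match generators and relations with the presentation \eqref{eq: quantum cluster algebra}, \eqref{eq: quantum cluster relations}.

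\textbf{Step 1: identify $\PP_\triangle$ with a localisation of $\PP_3$.} By Corollary \ref{cor:general-surfaces-invertibility}, applied to $\Sigma=\PP_3$ with $\Delta$ the three boundary edges,
\begin{equation*}
\SkAlg^{int}_{\G}(\PP_\triangle)\cong \SkAlg^{int}_{T^3}(\PP_3)[A_{\varpi_1,e}^{-1},A_{\varpi_2,e}^{-1}\,:\,e\in\Delta].
\end{equation*}
Here we use that inverting $A_\rho$ is the same as inverting $A_{\varpi_1}$ and $A_{\varpi_2}$, since $A_{\lambda+\mu}$ is a scalar multiple of $A_\lambda A_\mu$.

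The six edge skeins $A_{\varpi_i,e}$ will be the frozen variables. Each is a $G$-invariant arc joining two $T$-gates, labelled by a fundamental weight $\varpi_1$ or $\varpi_2$. We assign them to the six square vertices of Figure \ref{fig:SL3-quiver}, two per edge.

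\textbf{Step 2: produce the mutable variables.} For $Y_{\xi_4}$ we take the interior $\SL_3$ trivalent skein: a sink (or source) vertex in the $G$-region, with three legs coloured $\varpi_1$ (or $\varpi_2$) running to the three $T$-gates. This is the $G$-invariant of $V_{\varpi_1}^{\otimes 3}$, and its opposite orientation gives $Y'_{\xi_4}$. To normalise both, we will choose the weight labels at the gates and the framing. The choice should make their $T^3$-weights agree with the lattice vectors $\xi_4$ and $-\xi_4+\xi_1+\xi_5+\xi_6$ determined by $b=\epsilon+\hat C$.

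\textbf{Step 3: generation and relations.} To show these elements generate, we compute $\SkAlg^{int}_{T^3}(\PP_3)$ as the $U_q(\g)$-invariants of $O_q(G^+/N)^{\otimes 3}$. The invariants of triple tensor products are spanned by products of edge arcs and the two webs, by the first fundamental theorem for quantum $\SL_3$ (Kuperberg webs). In the localisation, reduced webs in the triangle become monomials in these generators.

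The relations come in three kinds:
\begin{itemize}
\item The $q$-commutation relations among arcs follow from the computation $A_\lambda A_\mu=q^{2(\lambda,\mu)}A_{\lambda+\mu}$ and from Lemma \ref{lemma:Alambda-good-properties}, together with the $\Rep_qT$ braiding at shared gates. These give $\kappa_{ij}$ for $i,j\in I_\ast$.
\item The relations between the webs and the arcs come from sliding a leg past an arc at a common $T$-gate. They produce the exponents $\kappa_{i4}$ and $\kappa'_{i4}$.
\item The exchange relations for $Y_{\xi_4}Y'_{\xi_4}$ and $Y'_{\xi_4}Y_{\xi_4}$ come from stacking the two webs and resolving with the $\SL_3$ web relations. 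These are the square removal and the crossing resolution $V_{\varpi_1}\otimes V_{\varpi_2}\cong V_{\varpi_1+\varpi_2}\oplus \mathbf 1$. We then push the resulting bigons and arcs into the $T$-regions via Lemma \ref{lemma: skein passing through point defect}. The result should be exactly two terms, $q^{\pm1/3}Y_{\xi_1}Y_{\xi_5}Y_{\xi_6}$ and $q^{\mp1/3}Y_{\xi_2}Y_{\xi_3}Y_{\xi_7}$.
\end{itemize}

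With generation and relations in hand, we get a surjection from $\mathscr{A}^q_{\SL_3,\Delta}$ onto $\SkAlg^{int}_{\G}(\PP_\triangle)$. To show it is an isomorphism, we will embed both sides in a common quantum torus and compare. On the skein side, Proposition \ref{prop:generic-position-flags-iso} gives an injective map $\eta$ into $O_q(T)\otimes O_q(G^+)$. On the cluster side, $\mathscr{A}^q_{\SL_3,\Delta}\subset\mathcal T^{\mathcal A}_Q$. We check that $\eta$ sends the frozen variables and $Y_{\xi_4}$ to the corresponding quantum torus monomials, so that the two embeddings are compatible.

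\textbf{Main obstacle.} The hardest part will be the exact $q$-powers, fractional in $q^{1/6}$, in the exchange relation and in $\kappa'$. These must match $\kappa=(b^{-1})^T\epsilon b^{-1}$ built from $b=\epsilon+\hat C$ with its half-weighted boundary arrows. Our approach is to fix normalisations of the arcs and webs by Weyl-ordering them with respect to their $T$-weights. We then check the $\kappa$ matrix first on the frozen subalgebra, where everything is a weight computation in $\Rep_qT^{\boxtimes 3}$. After that, we do the web resolution explicitly only once, for the product $Y_{\xi_4}Y'_{\xi_4}$, and obtain the reversed product from the $q$-commutation relations.
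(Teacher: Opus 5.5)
Your generators and relation checks agree with the paper. So does Step 1, where you recover $\PP_\triangle$ by inverting the edge skeins via Corollary \ref{cor:general-surfaces-invertibility}. The injectivity step, however, does not work as proposed.

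Proposition \ref{prop:generic-position-flags-iso} is about the digon, $O_q(G^+/N)^{\otimes 2}[A_\rho^{-1}]$, and its target $O_q(T)\otimes O_q(G^+)$ is not a quantum torus. Applying it along one edge of the triangle and taking $U_q(\g)$-invariants lands you in (a twisted form of) $O_q(T)\otimes O_q(G/N)$. That is still not a torus. Up to characters in $O_q(T)$, $Y_{\xi_4}$, $Y'_{\xi_4}$ and the arcs ending at the third gate go to degree-one elements of $O_q(G/N)$. These elements satisfy a three-term quadratic relation, namely the image of the exchange relation, so they are not torus monomials.

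Hence nothing available gives an algebra map $\Psi:\SkAlg^{int}_{\G}(\PP_\triangle)\to\mathcal{T}^{\mathcal{A}}_Q$ with $\Psi\circ\Phi$ equal to the inclusion, where $\Phi:\mathscr{A}^q_{\SL_3,\Delta}\to\SkAlg^{int}_{\G}(\PP_\triangle)$ is your surjection. Since $\Phi$ is surjective, producing such a $\Psi$ is equivalent to the injectivity you want. Checking where $\eta$ sends generators does not produce it. You would have to import a separate quantum-torus embedding of a localisation of $O_q(\SL_3/N)$.

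The missing ingredient is a dimension count, and it has to be done before localising. In the localised algebra the $T^3$-weight spaces are infinite-dimensional. For instance, the ratio $Y_{\xi_1}Y_{\xi_5}Y_{\xi_6}(Y_{\xi_2}Y_{\xi_3}Y_{\xi_7})^{-1}$ of the two monomials in the exchange relation has weight zero. The paper therefore works unlocalised, as follows:
\begin{itemize}
\item It sets $P_q$ to be the cluster presentation with the frozen variables not inverted, and aims to prove $P_q\cong\SkAlg^{int}_\G(\PP_3)$.
\item It grades both sides by $(\Lambda^+)^3$; each graded piece is finite-dimensional, and your generation and relation checks give a graded surjection.
\item It shows that $P_q$ and $P_{q=1}$ have the same PBW-type basis $Y_{\xi_1}^{a_1}\cdots Y_{\xi_7}^{a_7}(Y'_{\xi_4})^b$ with $a_4b=0$.
\item It identifies $P_{q=1}\cong(O(G/N)^{\otimes 3})^G$ by the first and second fundamental theorems of invariant theory.
\item It equates the Hilbert series of the quantum and classical invariant algebras, since the multiplicities of the trivial representation in $V_\lambda^*\otimes V_\mu^*\otimes V_\nu^*$ agree.
\end{itemize}
A graded surjection between algebras with equal Hilbert series is an isomorphism. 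Only then are the edge skeins inverted, which is harmless because they generate an Ore set.
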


\begin{figure}
    \centering
    \includegraphics[width=0.5\linewidth]{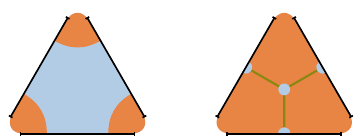}
    \caption{Left: $\PP_3$ with a gate in each $T$-region. Right: $\PP_\triangle$.}
    \label{fig:PP-triangle}
\end{figure}

\begin{proof}
    Denote by $P_q$ the subalgebra of $\mathscr{A}^q_{\SL_3, \Delta}$ generated by $Y_{\xi_i}$ for $i \in I$ and $Y_{\xi_4}'$. In other words, it is the quantum cluster algebra obtained without inverting the frozen variables.
    We will describe an explicit isomorphism $P_q\cong \SkAlg^{int}_{\G}(\PP_3)$.
    We know that $\SkAlg^{int}_{\G}(\PP_3) \cong (O_q(G/N)^{\otimes 3})^G$, and that the Peter-Weyl theorem implies
    \begin{equation}\label{eq:Peter-Weyl-for-OqGN3}
        (O_q(G/N)^{\otimes 3})^G \cong  \bigoplus_{\lambda, \mu,\nu \in \Lambda^+}(V^*_{\lambda}\otimes V_{\mu}^* \otimes V_{\nu}^*)^G \boxtimes \CC_{\lambda} \otimes \CC_{\mu} \otimes \CC_{\nu}.
    \end{equation} 
    This is a graded algebra with the triple $(\lambda, \mu,\nu) \in (\Lambda^+_{\SL_3})^3$ denoting the degree. We now specify elements $S_{ijk}$ with degrees given by fundamental weights $(\varpi_i,\varpi_j,\varpi_k)$ indexed by triples $(i,j,k)$ with $i,j,k<3$ and either $i+j+k =3$ or $i=j=k=2$.
    
    In terms of \eqref{eq:Peter-Weyl-for-OqGN3} the $S_{ijk}$ are as follows.
    Pick a highest weight vector $v_{\varpi_1} \in V_{\varpi_1}$ and its dual $v_{\varpi_1}^*$, normalised such that $\langle v_{\varpi_1}^*, v_{\varpi_1}\rangle=1$. Let $S_{ij0}$ denote to the element $A_{\varpi_j}$ as defined in \eqref{eq:digon-coordinate}, i.e. in the first two copies of $O_q(G/N)$. Analogously, $S_{0jk}$ and $S_{i0k}$ are equal to $A_{\varpi_k}$ in their corresponding pairs of $O_q(G/N)$ inside $(O_q(G/N)^{\otimes 3})^G$. After identification $V_{\varpi_2}\cong V_{\varpi_1}^*$, $A_{\varpi_1}$ and $A_{\varpi_2}$ are defined in terms of the highest weight vectors $v_{\varpi_1}$ and $Y^{-1} \rhd v_{\varpi_1}^*$, respectively. We define $S_{111}$ and $S_{222}$ as 
    \begin{equation}
        \begin{aligned}
    S_{111} &= \sum_{\sigma \in S_3}(-q)^{-l(\sigma)}e^{\sigma(1)} \otimes e^{\sigma(2)}\otimes e^{\sigma(3)} \boxtimes    v_{\varpi_1} \otimes  v_{\varpi_1} \otimes  v_{\varpi_1},
    \\
    S_{222} &= \sum_{\sigma \in S_3}(-q)^{-l(\sigma)}e_{\sigma(1)} \otimes e_{\sigma(2)}\otimes e_{\sigma(3)} \boxtimes   Y^{-1} \rhd v_{\varpi_1}^* \otimes Y^{-1} \rhd v_{\varpi_1}^* \otimes Y^{-1} \rhd v_{\varpi_1}^*.
\end{aligned}
    \end{equation}
    The corresponding skeins are shown in Figure \ref{fig:P3-variables}.
    In the $G$-region each skein is labelled by the defining representation and in the $T$-regions by the unique compatible 1-dimension $U_q\t$ representations, i.e. $\chi_{\varpi_1}$ or $\chi_{\varpi_2}$ depending on whether the $G$-skein is oriented out of or into the defect, respectively.
    States are given by $v_{\varpi_1} \in \chi_{\varpi_1}$ and $Y^{-1} \rhd v_{\varpi_1}^* \in \chi_{\varpi_2}$. 
\begin{figure}
    \centering
    \includegraphics[width=.8\linewidth]{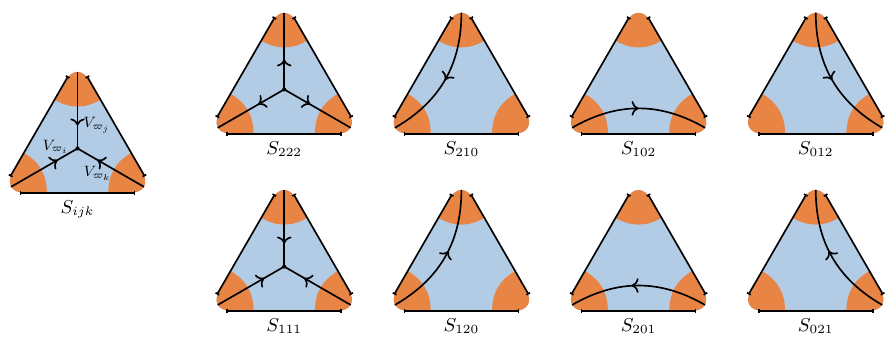}
    \caption{Generators for $\SkAlg^{int}(\PP_3).$}
    \label{fig:P3-variables}
\end{figure}
\begin{figure}
    \centering
    \includegraphics[width=0.7\linewidth]{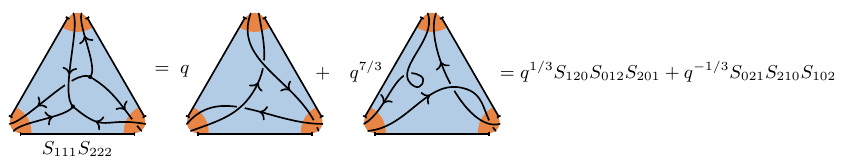}
    \caption{Cluster mutation and skein relations agree.}
    \label{fig:P3-mutation}
\end{figure}

 We can now define algebra morphism $ \varphi : P_q \to \SkAlg^{int}_\mathcal{G}(\PP_3)$, given on generators by
\begin{equation}
\begin{aligned}\label{eq: iso between cluster and skein}
       Y_{\xi_1} \mapsto S_{120}, \qquad Y_{\xi_2} \mapsto S_{021}, \qquad Y_{\xi_3} \mapsto S_{210}, \qquad Y_{\xi_4} \mapsto S_{111}, \\
    Y_{\xi_5} \mapsto S_{012}, \qquad Y_{\xi_6} \mapsto S_{201}, \qquad Y_{\xi_7} \mapsto S_{102}, \qquad Y_{\xi_4}' \mapsto S_{222}.
\end{aligned}
   \end{equation}  

Next, we check directly that \eqref{eq: iso between cluster and skein} respects the product and is therefore a morphism of algebras.
Assume without loss of generality that the surface boundary is oriented counter clockwise.
Recall that in the internal skein algebra a product $XY$ means $X$ is placed above $Y$ and enters each gate later according to the boundary orientation. This allows us to recover the relations from the quantum cluster algebra \eqref{eq: quantum cluster relations},
\begin{equation}\label{eq:P3-q-commuting-relations}
    \begin{aligned}
    S_{ij0}S_{ab0} &= S_{ab0} S_{ij0},\\
    S_{ij0}S_{0bc} &= q^{(\varpi_b,\varpi_j)}S_{0bc} S_{ij0},\\
    S_{ij0}S_{aaa} &= q^{(\varpi_j-\varpi_i,\varpi_a)}S_{aaa}S_{ij0},\\
    S_{111}S_{222}&= q^{1/3}S_{120}S_{012}S_{201} + q^{-1/3}S_{021}S_{210}S_{102},\\
    S_{222}S_{111}&= q^{-2/3}S_{120}S_{012}S_{201} + q^{2/3}S_{021}S_{210}S_{102}.
\end{aligned}
\end{equation}
Note that these relations imply the other omitted ones, because of the symmetries of the triangle.
The first three equations in \eqref{eq:P3-q-commuting-relations} are obtained by isotoping braidings to the $T-$regions and applying the coend relations at the gate to either absorb or emit a braiding at the cost of a power of $q$. The last two equation follows from the $\sl_3$ skein relations.
Recall from \cite{Sikora_2005} that for $\sl_n$, we have
\begin{equation} \label{eq: SL3 skein relations}
  \loopstrandJ{strand} = q^{1/n-n} \singlestrand{strand}  \quad \text{ and } \quad
  \NtoNvertexSL{n} = \quad (-q)^{\binom{n}{2}} \sum_{\sigma \in S_n} (-q^{1/n-1})^{\ell(\sigma)} \, \Nbox{\Tilde{\sigma}}, \notag
\end{equation}
where each skein is labelled by the defining representation and $\Tilde{\sigma}$ denotes the minimum positive crossing braid representing $\sigma$.
Applying this to the product $S_{111}S_{222}$ we arrive at the $\SL_3\CC$ quantum cluster mutation, see Figure \ref{fig:P3-mutation}.
Here we've used that any skein crossing a Borel defect twice in $\DD_1$ vanishes \cite[Lemma 3.1]{Brown_Jordan_2025}. The expression for $S_{222}S_{111}$ is obtained in a similar fashion.

It now remains to show that the map $\varphi: P_q \to \SkAlg^{int}_\mathcal{G}(\PP_3)$ is an isomorphism. We do so by appealing to the classical limit.

First note that $\varphi$ induces a $(\Lambda^+)^3$-grading on $P_q$, so that the degree of $Y_{\xi_i}$ is given by the degree of $\varphi(Y_{\xi_i})$.
This grading is well-defined since the defining relations \eqref{eq: quantum cluster relations} are homogeneous. We now show that the Hilbert series of the two algebras are equal.

Because $\Rep_q(G)$ and $\Rep(G)$ are equivalent as monoidal categories we have that
\begin{equation}
    {H(O_q(G/N)^{\otimes 3})^G) = H((O(G/N)^{\otimes 3})^G)}.
\end{equation}
Here we've used $H$ to denote the Hilbert series.

Classically, the space $((\SL_3/N)^3)^{\SL_3}$ can be identified with the configuration space of three decorated flags, i.e. the $\SL_3$-invariant subspace of $\{(v_i,\phi_i)_{i=1,2,3} \in V \times V^*| \phi_i(v_i) = 0\}$, with $V$ a $3$-dimensional complex vector space. The first and second fundamental theorems of invariant theory \cite{weyl1939classical} state that the algebra of functions is generated by $\phi_i(v_j), \det(v_1,v_2,v_3)$, and $\det(\phi_1,\phi_2,\phi_3)$ and that moreover the relations are such that $(O(G/N)^3)^G \cong P_{q=1}$, the algebra over $k$ with the same presentation as $P_q$ with relations evaluated at $q=1$ . Hence, it remains to show that $H(P_q) = H(P_{q=1})$.

The algebra $P_q$ has a $PBW$ type basis of elements of the form $Y_{\xi_1}^{a_1}Y_{\xi_2}^{a_2}\ldots Y_{\xi_7}^{a_7}(Y'_{\xi_4})^b$ where $a_4b =0$. Indeed, the defining relations allow us to rewrite any elements as a linear combination of the proposed basis elements. We have that 
\begin{equation}
    [Y_{\xi_i}, Y_{\xi_4}Y_{\xi_4}'] = [Y_{\xi_i},Y_{\xi_4}'Y_{\xi_4}] =[Y_{\xi_i},Y_{\xi_1}Y_{\xi_5}Y_{\xi_6}] =[Y_{\xi_i},Y_{\xi_2}Y_{\xi_3}Y_{\xi_7}] = 0 \qquad \text{ for } i \in I_*
\end{equation}
and
\begin{equation}
\begin{aligned}
    Y_{\xi_4}Y_{\xi_1}Y_{\xi_5}Y_{\xi_6} = qY_{\xi_1}Y_{\xi_5}Y_{\xi_6}Y_{\xi_4},  \qquad Y_{\xi_4}Y_{\xi_2}Y_{\xi_3}Y_{\xi_7}= q^{-1}Y_{\xi_2}Y_{\xi_3}Y_{\xi_7}Y_{\xi_4}\\
    Y_{\xi_4}'Y_{\xi_1}Y_{\xi_5}Y_{\xi_6} = q^{-1}Y_{\xi_1}Y_{\xi_5}Y_{\xi_6}Y_{\xi_4}',  \qquad Y_{\xi_4}'Y_{\xi_2}Y_{\xi_3}Y_{\xi_7}= qY_{\xi_2}Y_{\xi_3}Y_{\xi_7}Y_{\xi_4}'.
\end{aligned}    
\end{equation}

These equations imply there are no ambiguities when performing reductions, and hence the reduction process is well-defined. For the algebra $P_{q=1}$, we can perform the same reduction process and thus obtain a clear bijection between the two graded bases. Therefore, we conclude that the Hilbert series of $P_q$ and $P_{q=1}$ match.

We conclude by proving that $\varphi: P_q \to \SkAlg^{int}_\mathcal{G}(\PP_3)$ is surjective. 
Any skein in $\SkAlg^{int}_\mathcal{G}(\PP_3)$ can be written as a sum of skeins with $T$-weights $(\lambda,\mu,\nu) \in (\Lambda^+_{\SL_{3}})^3$ with trivalent coupon labelled by some morphism  $f:V_\lambda \otimes V_{\mu} \otimes V_{\nu} \to \1$ in the $G$-region.
Any irreducible representation in $\Rep_q(\SL_3)$ appears as a subrepresentation of tensor powers of the defining representation and its dual, i.e. we always have a monomorphism $i_\lambda:V_{\lambda =a\varpi_1 + b\varpi_2} \hookrightarrow V_{\varpi_1}^{\otimes a} \otimes V_{\varpi_2}^{\otimes b}$.
The morphism $f$ can be expressed as $(i_\lambda \otimes i_\mu \otimes i_\nu) \circ f'$ where $f'$ corresponds to a collection of skeins labelled by the defining representation, with crossings, cups, caps and trivalent sources and sinks. The map $i_\lambda$ sends the highest weight vector $v_\lambda$ to $v_{\varpi_1}^{\otimes a} \otimes v_{\varpi_2}^{\otimes b}$.
This fact allows the coupon labelled by $i_\lambda$ to pass through the Borel defect to the $T$-region, where it can be absorbed into a gate using the boundary relations of internal skein algebras.

Thus, we have reduced our initial skein to a sum of products of skeins all labelled by the defining representation in the $G$-region.
Possibly after a number of isotopies, we can use the second skein relation in equation \eqref{eq: SL3 skein relations} to further reduce this collection of skeins to the case where the trivalent vertices are all sinks or all sources. If there are no trivalent vertices remaining, the resulting element is, again up to skein relations, a product of the skeins $S_{ijk}$ where exactly of the indices is zero. Suppose there are only sinks remaining, they must be equal to $S_{111}$, up to a scalar. Indeed, as there are only sinks each incident edge must end at one of the gates. However, they must each end at a different gate. Indeed, a skein with two or three edges ending at the same gate would equal the zero skein after evaluation at the Borel defect. This follows from the fact that the defining representation has no elements of weight $2\varpi_1$ and the trivial representation has no elements of weight $3\varpi_1$.
Analogously, one argues that any source, without sinks present, must be equal to $S_{222}$. Thus, the original skein can be written as a product of the elements $S_{ijk}$ and the map $\varphi$ is surjective.

We have established a surjective graded algebra map $\varphi: P_q \cong \SkAlg^{int}_\mathcal{G}(\PP_3)$ between two graded algebras, for which the Hilbert series coincide. Thus, for each graded piece there is a surjective map between two vector spaces of the same dimension, hence it is an isomorphism.

Inverting the skeins $S_{ijk}$ where either $i,j$ or $k$ is zero, one obtains the internal skein algebra for $\PP_\triangle$ and the theorem follows.
\end{proof}

\bibliography{main.bib}

\end{document}